\documentclass[12pt]{amsart}
\usepackage{verbatim, latexsym, amssymb, amsmath,color}
\usepackage{amsfonts}
\usepackage{amsmath}
\usepackage{graphicx}
\usepackage{color}
\setcounter{MaxMatrixCols}{30}
\setcounter{page}{1}

\usepackage{stmaryrd}
\usepackage{latexsym,amscd,amsthm,amsxtra}

\usepackage{amsfonts}

\usepackage{mathrsfs}
\usepackage{latexsym,amssymb}

\usepackage{epsfig,color}
\usepackage{blindtext}
\usepackage{enumerate}
\usepackage{hyperref}
\usepackage{url}
\usepackage{bbm}
\usepackage{filecontents}
\usepackage{nicefrac,mathtools}
\DeclareGraphicsExtensions{.pdf,.jpeg,.png}
\usepackage{epstopdf}
\usepackage{cancel} 
\usepackage[normalem]{ulem} 

\usepackage{color}
\usepackage[msc-links, lite]{amsrefs}
\usepackage{geometry}
\geometry{left=3.5cm,right=3.5cm,top=4.5cm,bottom=4cm}

\setcounter{section}{0}

\newtheorem{theorem}{Theorem}[section]

\newtheorem{proposition}[theorem]{Proposition}
\newtheorem{lemma}[theorem]{Lemma}
\newtheorem{corollary}[theorem]{Corollary}

\newtheorem{question}[theorem]{Question}

\newtheorem{claim}[]{Claim}

\theoremstyle{definition}
\newtheorem{definition}[theorem]{Definition}
\theoremstyle{remark}
\newtheorem{remark}[theorem]{Remark}

\numberwithin{equation}{section}

\newcommand{\mf}{\mathbf}
\newcommand{\mb}{\mathbb}
\newcommand{\mc}{\mathcal}

\newcommand{\wti}{\widetilde}

\newcommand{\M}{\mathbf M}
\newcommand{\m}{\mathbf m}

\newcommand{\Z}{\mathcal Z}

\newcommand{\R}{\mathbb{R}}

\newcommand{\n}{\mathbf n}

\DeclareMathOperator{\area}{Area}

\DeclareMathOperator{\Ric}{Ric}

\DeclareMathOperator{\spt}{spt}

\title{Free boundary minimal hypersurfaces with least area}

\date{\today}

\author{Qiang Guang}
\address{Department of Mathematics, University of California Santa Barbara, Santa Barbara, CA 93106, USA}
\email{guang@math.ucsb.edu}

\author{Zhichao Wang}
\address{School of Mathematical Sciences, Peking University Yiheyuan Road 5, Beijing, P.R.China, 100871}
\email{wangzhichaonk@gmail.com}

\author{Xin Zhou}
\address{Department of Mathematics, University of California Santa Barbara, Santa Barbara, CA 93106, USA}
\email{zhou@math.ucsb.edu}

\begin{document}

\begin{abstract}
In this paper, we prove the existence of the free boundary minimal hypersurface of least area in compact manifolds with boundary. Such hypersurface can be viewed as the ground state of the volume spectrum introduced by Gromov. Moreover, we characterize the orientation and Morse index of them.
\end{abstract}

\maketitle

\section{Introduction}

Let $(M^{n+1},\partial M)$ be a compact Riemannian manifold with boundary. We say that a smooth embedded hypersurface $\Sigma^n\subset M^{n+1}$ is a {\em free boundary minimal hypersurface}, abbreviated as FBMH, if $\Sigma$ has vanishing mean curvature and $\partial \Sigma$ meets $\partial M$ orthogonally. FBMHs arise variationally as critical points of the area functional among all hypersurfaces in $M$ with boundary constrained freely on $\partial M$. The investigation of such hypersurfaces dates back at least to Courant \cite{Cou40} and Lewy \cite{Le51}, and there were intense study afterward, e.g., \citelist{\cite{HN79}\cite{MY80}\cite{Str84}\cite{GJ86}\cite{Jost86}\cite{Ye91}}. Many new progress were made in recent years. Among them, Schoen-Fraser \citelist{\cite{FS11}\cite{FS16}} constructed many examples of free boundary minimal surfaces in the round ball and found a deep relation of them with the extremal eigenvalue problem. More examples of minimal surfaces with free boundary in the round three-ball were recently found by Folha-Pacard-Zolotareva \cite{FPZ17}, Ketover \cite{Ket16b} and Kapouleas-Li \cite{KL17}. Maximo-Nunes-Smith \cite{MNS17} constructed an annuli type of such minimal surfaces in certain convex three-manfolds using degree theory.  Lastly and foremost, global variational theory for constructing FBMHs in an arbitrary manifold via min-max method was initiated by Almgren \citelist{\cite{Alm62}\cite{Alm65}}, and completely established by the last author with Li \cite{LZ16}; see \citelist{\cite{GJ86}\cite{Jost86}\cite{Li15}\cite{DeRa16}} for partial results and \cites{Str84, Fra00} for the mapping approach. Inspired by Almgren's pioneer work, Gromov introduced the non-linear spectrum of area functional \cite{Gr03}, for which a Weyl law was established by Liokumovich-Marques-Neves \cite{LMN16}. In this perspective, FBMHs are eigenstates of area functional. Compared with the classical spectrum theory, one natural question is whether the ground state among all area spectrum exists. In particular,

\begin{question}
\label{Q1}
Does there exist a FBMH whose area is less than all others?
\end{question}

In this paper, we give a complete affirmative answer to above question. In particular, we prove the existence of smooth embedded free boundary minimal hypersurfaces in $(M^{n+1},\partial M,g)$ $(2\leq n\leq 6)$ which minimize the area among all such hypersurfaces. To be precise, let $\mc{O}$ be the collection of all embedded compact orientable  FBMHs in $M$ and $\mc{U}$ be the collection of all non-orientable ones. Set
\[\mc{A}_1(M,\partial M)=\inf\big(\{\area(\Sigma),\,\Sigma\in \mc{O}  \}\cup \{2\area(\Sigma),\,\Sigma\in \mc{U} \}\big).\]

Our main result is the following:
\begin{theorem}\label{thm:main}
Let $M^{n+1}$ be a smooth compact orientable Riemannian manifold with  boundary $\partial M$ and $2\leq n\leq 6$. Then there exists a smooth embedded free boundary minimal hypersurface $\Sigma$ in $M$ such that $\mc{A}_1(M,\partial M)$ is realized by $\Sigma$. Moreover, $\Sigma$ satisfies the following:
\begin{enumerate}
\item If $\Sigma\in \mc{O}$, then $\Sigma$ has index $0$ or $1$. In the latter case, $M$ is the min-max minimal hypersurface corresponding to the fundamental class.
\item If $\Sigma\in \mc{U}$, then $\Sigma$ is stable. Moreover, the 2-sheeted covering of $\Sigma$ is stable.
\end{enumerate}
\end{theorem}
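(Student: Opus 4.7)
\medskip

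\noindent\textbf{Proof proposal.} The plan is to combine the Almgren--Pitts min-max theory for free boundary minimal hypersurfaces from \cite{LZ16} with a compactness theorem for FBMHs having uniformly bounded area and Morse index, valid in the dimension range $2\le n\le 6$.

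First, apply the free boundary min-max construction to the fundamental homotopy class of one-parameter sweepouts of $M$, producing a smooth embedded FBMH $\Gamma$ of area equal to the first width $\omega_1(M,\partial M)$ and Morse index at most one. Counting appropriate multiplicities on the orientable and non-orientable components of $\Gamma$ yields the upper bound $\mc{A}_1(M,\partial M)\le \omega_1(M,\partial M)<\infty$. Now take a minimizing sequence $\{\Sigma_k\}\subset\mc{O}\cup\mc{U}$ whose effective areas tend to $\mc{A}_1(M,\partial M)$, and arrange uniform index bounds as follows: if some $\Sigma\in\mc{O}$ has Morse index $\ge 2$, use the first two negative Jacobi eigenfunctions (with the appropriate Robin boundary condition) to build a localized $2$-parameter smooth deformation on which the area strictly decreases off $\Sigma$; running the free boundary min-max on this family, together with upper semicontinuity of the min-max width, yields another FBMH of strictly smaller area. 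An analogous equivariant construction on the oriented double cover handles unstable $\Sigma\in\mc{U}$. Iterating reduces to a minimizing sequence in which every $\Sigma_k$ has index at most one (or has stable oriented double cover, respectively).

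With this bound in hand, the compactness theorem for embedded FBMHs of bounded area and bounded Morse index provides subsequential smooth graphical convergence, possibly with integer multiplicities, of $\Sigma_k$ to a smooth embedded limit $\Sigma_\infty$. Careful bookkeeping of orientation and of the two-to-one weighting in the definition of $\mc{A}_1$ shows that the effective area of $\Sigma_\infty$ equals $\mc{A}_1(M,\partial M)$, so the infimum is realized. For the structural claims: if $\Sigma_\infty\in\mc{O}$ has index exactly one, its unique unstable deformation produces a one-parameter sweepout whose width equals $\area(\Sigma_\infty)$, forcing $\area(\Sigma_\infty)=\omega_1(M,\partial M)$ and identifying $\Sigma_\infty$ with the min-max FBMH for the fundamental class; if $\Sigma_\infty\in\mc{U}$, any instability of its oriented double cover would descend, via the deck involution, to a variation of $\Sigma_\infty$ strictly lowering its doubled area, contradicting minimality, hence the double cover (and $\Sigma_\infty$ itself) is stable.

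The main obstacle I foresee is the index-reduction step: one must manufacture a localized free boundary sweepout whose maximum area is strictly less than $\area(\Sigma)$, and then control the resulting min-max limit carefully enough to avoid degenerations that lose area, create spurious multiplicity, or violate the orthogonality condition at $\partial M$. The Robin-type boundary behavior of the Jacobi operator, together with possible tangential interactions of nodal sets of the unstable eigenfunctions with $\partial M$, is what makes this step delicate and is where I expect the bulk of the technical work to lie.
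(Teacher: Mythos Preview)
Two steps in your proposal contain genuine gaps.

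First, the index-reduction mechanism is not well-posed. A localized $2$-parameter deformation of $\Sigma$ built from two negative Jacobi eigenfunctions is a map from a small disk into nearby hypersurfaces; it is homotopically trivial and carries no min-max content, so ``running the free boundary min-max on this family'' produces nothing. What is actually required is to embed $\Sigma$ as the maximal slice of a \emph{global} one-parameter sweepout of $(M,\partial M)$ in the fundamental class with $\mathbf M(\Phi(t))<\area(\Sigma)$ for $t\neq\tfrac12$. This is the main technical result of the paper (Proposition~\ref{prop:less_area:sweepout}): the first eigenfunction yields only a local foliation by barriers, and extending it to a global sweepout without raising area requires min-max with barriers (Theorem~\ref{new minimal surface}) together with a homological argument ruling out stable FBMHs of small area in the complementary pieces --- which in turn needs the hypothesis $\area(\Sigma)<\mc A_{\mathcal S}(M,\partial M)$. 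Only once this sweepout exists can a second negative direction be used to depress its maximum, and the paper uses that move solely to bound the index of the already-constructed realizer, not to iterate along a minimizing sequence.

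Second, your argument for stability of the double cover in the non-orientable case is incorrect. Sections of the normal bundle of a one-sided $\Sigma$ correspond to \emph{odd} functions on the double cover $\widetilde\Sigma$, so an even negative eigenfunction on $\widetilde\Sigma$ (which is precisely what exists when $\Sigma$ is stable but $\widetilde\Sigma$ is not) does not descend to an ambient normal variation of $\Sigma$. The paper instead opens $M$ along $\Sigma$, builds a sweepout of the resulting manifold with portion $\widetilde\Sigma$ (Proposition~\ref{prop:sweepout_nonori}), and then applies the catenoid estimate (Proposition~\ref{prop:non-min-max}) to glue the two sheets into a sweepout of $M$ with $\sup_t\mathbf M(\Phi(t))<2\area(\Sigma)$; min-max on the fundamental class then yields the contradiction. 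Architecturally, the paper never takes a minimizing sequence or invokes index-bounded compactness: it splits on whether $\mc A_1=\mc A_{\mathcal S}$ (where stable compactness, Theorem~\ref{ques:stable:realize}, already gives a realizer) or $\mc A_1<\mc A_{\mathcal S}$ (where Proposition~\ref{prop:less_area:sweepout} forces $\mathbf L(\Pi_M)\le\mc A_1$, and the min-max hypersurface for the fundamental class realizes $\mc A_1$ directly).
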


Note that the area of non-orientable hypersurfaces is counted with multiplicity two, and the reason is that non-orientable minimal hypersurfaces produced by the min-max method have even multiplicity by \cites{Zhou15, Wang}. $\mc{A}_1(M,\partial M)$ will be called the {\em least area among embedded free boundary minimal hypersurfaces in $M$}.

\begin{remark}
One of the main features of our result is that we allow FBMHs to be improper, i.e., the interior of FBMHs may touch the boundary of the ambient manifold (see Definition \ref{def:touching}). Note that we do not assume any boundary convexity on the ambient manifold, so this touching phenomenon is allowed to happen.
\end{remark}

\vspace{1em}
For closed manifolds, the least area closed minimal hypersurfaces and min-max hypersurfaces have been well studied. For instance, in three dimension, the least area Heegaard minimal surface always exists by a classical compactness theorem, and Marques-Neves \cite{MN12} proved that it is produced by min-max method in any closed three-manifold which admits no stable minimal surfaces. In higher dimensions, a priori the existence of least area minimal hypersurfaces was not known due to the lack of compactness result; nevertheless, as a byproduct of the study of the Morse index problem, the last author \cites{Zhou15, Zhou17} proved that the min-max hypersurface has least area in manifolds with positive Ricci curvature.
Later on, without assuming any curvature conditions, Mazet-Rosenberg \cite{MaRo17} further proved that the least area is achieved either by a stable closed minimal hypersurface or by a min-max hypersurface of Morse index one in any closed $(n+1)$-manifold ($2\leq n\leq 6$).

For compact manifolds with non-empty boundary, the first result toward Question \ref{Q1} was obtained by the second author \cite{Wang} for manifolds with non-negative Ricci curvature and convex boundary. In particular, it was proven that the min-max FBMH is orientable of multiplicity one, with Morse index one. Furthermore, the min-max FBMH has least area among all embedded orientable ones. Our resolution of Question \ref{Q1} can be viewed as a free boundary analog of the result in \cite{MaRo17}. However, without assuming any convexity assumption on $\partial M$, the situation turns to be very subtle, especially due to the touching phenomenon of the min-max minimal hypersurfaces, which was predicted to generally exist in \cite{LZ16}. More precisely, when the boundary $\partial M$ is non-convex, such a FBMH generally can be non-proper, or equivalently, the interior of a FBMH can touch $\partial M$ in a non-empty set.  This brings in essential new challenges for the deformation trick used in \citelist{\cite{MaRo17}\cite{ Wang}}. In course of the proof, we introduce several new ideas to deal with this issue, and we believe that our new technique will also be useful in other problems related to FBMHs.

\begin{remark}
Throughout the paper, $(M^{n+1},\partial M,g)$ is always a manifold with $2\leq n\leq 6$ unless explicitly stated otherwise. Moreover, it can always be embedded to a closed manifold $\widetilde M$ which has the same dimension with $M$.

A FBMH in this paper is always allowed to be closed.
\end{remark}

Our main result follows similar strategy used by Mazet-Rosenberg \cite{MaRo17}. The first step is to consider stable FBMHs. Using the curvature estimates and compactness result in \cite{GLZ16} and \cite{ACS17}, we will show that there is a stable one minimizing the area among all stable ones. If $\mc{A}_1(M,\partial M)$ is equal to the least area of stable ones, then we are done. Otherwise, we proceed to the second step, in which we consider all embedded  orientable and unstable FBMHs with area less than the least area of all stable ones. We will show that each of such hypersurfaces can be embedded into a sweepout of $M$. Then the last step is to apply the min-max theory by Li-Zhou \cite{LZ16} for the sweepout so that we can obtain a free boundary one with least area, which will imply that $\mc{A}_1(M,\partial M)$ is realized. Apparently, the key step is to construct a good sweepout for any orientable and unstable FBMH with area less than that of all stable ones.

To prove the existence of such sweepouts, we use contradiction arguments. Assuming that such a good sweepout does not exist, then a new FBMH is produced by the min-max theory \cite{LZ16}. We emphasis that a generalized type of Almgren-Pitts theory \citelist{\cite{Pit76}\cite{MN14}\cite{LZ16}} is essentially used here, which is different from the continuous min-max theory used in \citelist{\cite{MN12}\cite{Zhou15}\cite{MaRo17}\cite{Wang}}. The necessity of Almgren-Pitts setting is due to that continuous min-max theories \citelist{\cite{Code03}\cite{DeRa16}} are not suitable for compact manifolds without convexity assumptions of boundary.

\vspace{1em}
The paper is organized as follows. In Section \ref{sec:preliminary}, we recall some basic definitions and prove that a FBMH can be perturbed to  a barrier. In Section \ref{sec:stable}, we consider the case when least area is attained by the area of a sequence of stable free boundary ones. In Section \ref{sec:min-max}, we introduce the min-max theory developed by Li-Zhou \cite{LZ16}. Also, we prove that it still works when there is a barrier. In Section \ref{sec:unstable}, we embed each FBMH with area less than $\mathcal A_\mathcal S(M,\partial M)$ (see (\ref{E:As})) into a good sweepout. Finally, in Section \ref{sec:proof}, we give the proof of our main result Theorem \ref{thm:main}.

\vspace{1em}
{\bf Acknowledgement:} 
The authors would like to thank Professor Gang Tian for his interest in this paper.

\section{Preliminaries}\label{sec:preliminary}
In this section, we first recall some basic definitions and preliminary results for FBMHs and then prove that each one with non-zero first eigenvalue for the Jacobi operator can be perturbed to a barrier.

\begin{definition}(\cite{LZ16})\label{def:touching}
Let $M^{n+1}$ be a smooth manifold with boundary $\partial M$ and $\Sigma^n$ a smooth $n$-dimensional manifold with boundary $\partial \Sigma$. We say that  a smooth embedding $\phi: \Sigma\to M$ is an \emph{almost proper embedding} of $\Sigma$ into $M$ if $\phi(\Sigma)\subset M$ and $\phi(\partial \Sigma)\subset \partial M$. We often write $(\Sigma,\partial \Sigma)\to (M,\partial M)$ and say that $\Sigma$ is an {\em almost properly embedded hypersurface} in $M$.
\end{definition}

For an almost properly embedded hypersurface $(\Sigma, \partial\Sigma)$, we allow the interior of $\Sigma$ to touch $\partial M$. That is to say: $\mathrm{int}(\Sigma)\cap \partial M$ may be non-empty.  We usually call $\mathrm{int}(\Sigma)\cap \partial M$ the {\em touching set} of $\Sigma$.

\subsection{The Morse Index}
Let $(M^{n+1},\partial M,g)$ be a compact Riemannian manifold with boundary. Suppose that $(\Sigma^n,\partial\Sigma)\subset (M^{n+1},\partial M)$ is a smooth embedded FBMH. The quadratic form associated to the second variation formula is defined as
\begin{equation}\label{equ:index_form}
Q(v,v)=\int_\Sigma \left(|\nabla^\perp v|^2- \Ric_M(v,v)-|A|^2|v|^2 \right)\,d\mu  - \int_{\partial \Sigma} h^{\partial M}(v,v)\,ds,
\end{equation}
where $v$ is a section of the normal bundle of $\Sigma$, $A$ and $h^{\partial M}$ are the second fundamental forms of $\Sigma$ and $\partial M$, respectively.

The \emph{Morse index} of $\Sigma$ is defined as the number of negative eigenvalues of the quadratic form $Q$, and  a FBMH is called stable if $Q$ is non-negative.

If $\Sigma$ is two-sided, i.e., there exists a globally defined unit normal vector field $\mf{n}$ on $\Sigma$, any normal vector field on $\Sigma$ has the form $\phi \mf{n}$, where $\phi \in C^{\infty}(\Sigma)$. Then the quadratic form can be expressed as
\[Q(\phi,\phi)=\int_\Sigma \left(|\nabla_\Sigma \phi|^2- (\Ric_M(\mf{n},\mf{n})+|A|^2)\phi^2 \right)\,d\mu  - \int_{\partial \Sigma} h^{\partial M}(\mf{n},\mf{n})\phi^2\,ds.\]
Integration by parts gives that
\begin{equation}
Q(\phi,\phi)=-\int_\Sigma \phi L\phi\,d\mu+\int_{\partial \Sigma} \left(\phi \frac{\partial \phi}{\partial \eta}-h^{\partial M}(\mf{n},\mf{n})\phi^2\right)\,ds,
\end{equation}
where $\eta$ is the outward unit co-normal of $\partial \Sigma$ and  $L$ is the Jacobi operator of $\Sigma$
\begin{equation}
\label{E:Jacobi operator}
L=\Delta_\Sigma +|A|^2+\Ric_M(\mf{n},\mf{n}).
\end{equation}
Let $\lambda_1<\lambda_2\leq\lambda_3\leq...$ be eigenvalues of the following system
\begin{equation}
\label{neumann problem}
\left\{
\begin{array}{ll}
Lu+\lambda u=0 \quad & \text{on }  \Sigma,\\
\frac{\partial u}{\partial \eta}=h^{\partial M}(\mf{n},\mf{n})u \quad & \text{on } \partial \Sigma.
\end{array}
\right.
\end{equation}
The index of $\Sigma$ is just equal to the number of negative eigenvalues of (\ref{neumann problem}).

\subsection{Construction of Barriers}We now prove that certain FBMH can be perturbed to a barrier (in the sense described as follows). Let us first introduce some notations.
\begin{definition}
\label{D:manifold with boundary and portion}
For a manifold with piecewise smooth boundary, $N$ is called a \emph{manifold with boundary $\partial N$ and portion $T$} if
\begin{itemize}
  \item $\partial N$ and $T$ are smooth, which may be disconnected;
  \item $\partial N\cup T$ is the boundary of $N$.
\end{itemize}
We will denote it by $(N,\partial N,T)$ (see Figure I).

Moreover, $T$ is called a \emph{barrier} if $T$ is mean convex and $\langle\nu_{\partial N},\n_T\rangle<0$ on $T\cap \partial N$, where $\nu_{\partial N}$ and $\n_T$ are the outward pointing normal vector fields of $\partial N$ and $T$.
\end{definition}


\begin{figure}[h]
\begin{center}
\def\svgwidth{0.4\columnwidth}
  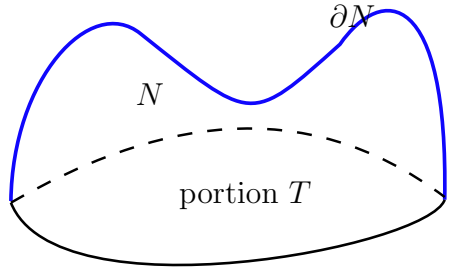
  \label{graph:portion}
  \caption{Manifold with boundary $\partial N$ and portion $T$.}
\end{center}
\end{figure}

\begin{definition}\label{def:available vector}
 Let $\Sigma\subset M$ be a FBMH which may have touching set (see Figure II). Denote $\mathfrak X(M,\Sigma)$ as the collection of vector fields $X\in \mathfrak X(\widetilde M)$ in $\widetilde{M}$ such that $X(p)\in T_p(\partial M)$ for $p$ in a neighborhood of the boundary of $\Sigma$.
\end{definition}
\begin{remark}
Note that in the above definition, a vector field $X\in \mathfrak X(\widetilde M)$ along the touching set $\mathrm{int}(\Sigma)\cap \partial M$ may not be tangential to $\partial M$. In fact, $X$ is allowed to point inward or outward of $M$.
\end{remark}
\begin{figure}[h]
\begin{center}
\def\svgwidth{0.4\columnwidth}
  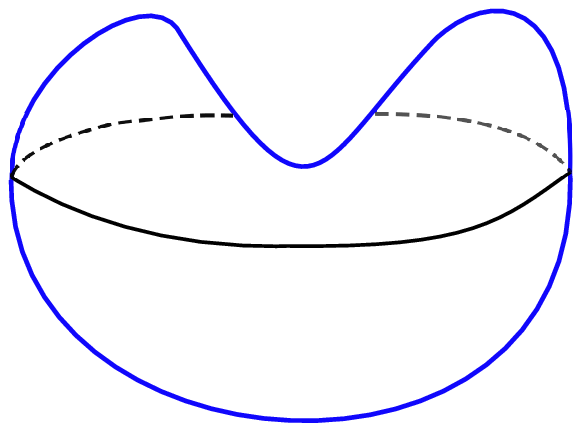
  \label{graph:touching}
  \caption{}
\end{center}
\end{figure}

Let $(F_s)_{0\leq s\leq 1}$ be a family of diffeomorphisms of $\widetilde M$ corresponding to $X\in\mathfrak X(M,\Sigma)$. Note that $F_s$ is allowed to push $\Sigma$ out of $\widetilde M$ along the touching set $\mathrm{int}(\Sigma)\cap \partial M$. For any $p\in\Sigma$, there is a coordinate chart $(x^1, x^2, \cdots, x^n, s)$ around $p$ such that
\begin{enumerate}
\item $(x, s) \in\Sigma$ iff $s=0$;
\item $\{e_i=\frac{\partial}{\partial x^i}\}$ is an orthogonal basis at $p$;
\item $\partial_s=X$.
\end{enumerate}

Let $(h_{ij})$ be the second fundamental form under the coordinates $(x^1, x^2, \cdots, x^n)$. Let $\n$ and $\n_s$ be the unit normal vector fields of $\Sigma$ and $F_s(\Sigma)$ respectively. Set
\[u=\langle \n_s,\nu_{\partial M}\rangle,\]
which is a smooth function on a neighborhood of $\partial\Sigma$ in $\partial M$.

\begin{lemma}\label{lem:derivative}
If in addition that $\Sigma$ is orientable and  $X|_{\Sigma}=f\n$ for some smooth function $f$, we have the following equation at $p$,
\begin{gather}
\nabla_Xe_i\big|_{s=0}=-fh_{ij}e_j+f_i\n.\\
\nabla_X\n_s\big|_{s=0}=-\nabla f,\label{equ:normal}\\
\nabla_Xu\big|_{\partial\Sigma}=fh^{\partial M}(\n,\n)-\langle\nabla f,\nu_{\partial M}\rangle, \label{equ:acute angle}\\
\nabla_XH\big|_{\Sigma}=Lf,\label{equ:positive mean curvature}
\end{gather}
where $L$ is the Jacobi operator (\ref{E:Jacobi operator}).
\end{lemma}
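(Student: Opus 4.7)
The plan is to verify all four identities as straightforward first-variation computations in the given coordinate chart $(x^1,\dots,x^n,s)$. The key structural observation is that $X=\partial_s$ and $e_i=\partial_{x^i}$ are coordinate vector fields, hence $[X,e_i]=0$ and $\nabla_X e_i = \nabla_{e_i}X$. Restricting to $s=0$ where $X = f\n$, Leibniz together with the Weingarten equation $\nabla_{e_i}\n = -h_{ij}e_j$ of $\Sigma\subset\widetilde M$ immediately yields the first identity.

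For $\nabla_X\n_s|_{s=0}$, I would expand it in the basis $(e_1,\ldots,e_n,\n)$. The $\n$-component vanishes because $|\n_s|^2\equiv 1$, so only the tangential coefficients remain. Differentiating the orthogonality relation $\langle\n_s,F_{s*}e_i\rangle=0$ in $s$ and substituting the first identity gives
\[
\langle\nabla_X\n_s,e_i\rangle\big|_{s=0} = -\langle\n,\nabla_X e_i\rangle\big|_{s=0} = -f_i,
\]
so that $\nabla_X\n_s|_{s=0}=-\nabla f$ since the frame is orthonormal at $p$.

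The boundary formula then follows by differentiating $u=\langle\n_s,\nu_{\partial M}\rangle$ via Leibniz. Here the free boundary condition is crucial: along $\partial\Sigma$, $\n$ is tangent to $\partial M$, so $\nabla_X\nu_{\partial M}|_{s=0}=f\nabla_\n\nu_{\partial M}$, and the Weingarten identity of $\partial M\subset\widetilde M$ applies to $\n$. Combined with $\nabla_X\n_s|_{s=0}=-\nabla f$, this produces the stated expression. The last identity is the classical first variation of mean curvature under a normal deformation: starting from $H_s = g^{ij}(s)\langle\nabla_{F_{s*}e_i}\n_s, F_{s*}e_j\rangle$, differentiating in $s$, and applying the first two identities together with Ricci commutation, one recovers the Jacobi operator $L = \Delta_\Sigma + |A|^2 + \Ric_M(\n,\n)$ acting on $f$.

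The one genuine care-point is to keep the sign convention for $h^{\partial M}$ consistent; the lemma's statement and the Robin condition in (\ref{neumann problem}) together pin it down, so one simply adopts the convention that makes the two compatible. Otherwise the touching set $\mathrm{int}(\Sigma)\cap\partial M$ causes no trouble: the first, second, and fourth identities are pointwise computations along $\Sigma$ with $X=f\n$ there, while the third is evaluated only on $\partial\Sigma$ where the free boundary condition guarantees that $\n\in T(\partial M)$.
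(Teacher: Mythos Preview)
Your proposal is correct and follows essentially the same route as the paper: both use the coordinate-field identity $\nabla_X e_i = \nabla_{e_i}X = \nabla_{e_i}(f\n)$ with Weingarten for the first line, differentiate $\langle\n_s,e_i\rangle=0$ for the second, apply Leibniz to $u=\langle\n_s,\nu_{\partial M}\rangle$ together with the free-boundary condition $\n\in T(\partial M)$ for the third, and defer the fourth to the standard first-variation-of-$H$ computation. Your added remarks on the vanishing $\n$-component, the sign convention for $h^{\partial M}$, and the irrelevance of the touching set are accurate refinements but do not change the argument.
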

\begin{proof}
By definition,
\[\partial_s=X.\]
Hence,
\[\nabla_Xe_i\big|_{s=0}=\nabla_{e_i}X\big|_{s=0}=\nabla_{e_i}(f\n).\]
We conclude that
\[\nabla_X e_i\big|_{s=0}=-fh_{ij}e_j+f_i\n.\]
Then for any $i$,
\[
    \langle\nabla_X\n_s,e_i\rangle\big|_{s=0}=-\langle\n,\nabla_X{e_i}\rangle=\langle-\nabla f,e_i\rangle,
\]
and this proves (\ref{equ:normal}). To confirm (\ref{equ:acute angle}),
\begin{align*}
    \nabla_Xu\Big|_{\partial\Sigma}&=\nabla_X\langle\n_s,\nu_{\partial M}\rangle\Big|_{\partial\Sigma}\\
                                   &=\langle\nabla_X\n_s,\nu_{\partial M}\rangle\Big|_{s=0}+\langle\n_s,\nabla_{X}\nu_{\partial M}\rangle\Big|_{s=0}\\
                                   &=-\langle\nabla f,\nu_{\partial M}\rangle+fh^{\partial M}(\n,\n).
\end{align*}
The last can be derived by a standard computation.
\end{proof}

By Lemma \ref{lem:derivative}, we conclude that
\begin{proposition}
\label{prop:barrier:unstable}
Let $(M,\partial M, \Sigma)$ be a compact manifold with portion $\Sigma$, where $\Sigma$ is an unstable FBMH. Then there is a family of hypersurfaces $\{\Sigma_s\}_{0\leq s\leq \tau}$ for small $\tau>0$ such that $(\Sigma_s,\partial\Sigma_s)\subset(M,\partial M)$, and they satisfy
  \begin{enumerate}
    \item\label{item:devide} $\Sigma_0=\Sigma$ and $\Sigma_s\cap\Sigma=\emptyset$ for $s\neq 0$;
    \item\label{item:proper} $\Sigma_s$ is a properly embedded hypersurface with boundary on $\partial M$ for each $s\neq 0$;
    \item\label{item:area}   $\area(\Sigma_t)$ is decreasing with respect to $t$.
  \end{enumerate}
Moreover, if we set
\[ M_s:=M\setminus(\cup_{0\leq t<s}\Sigma_t),\]
$(M_s,\partial M_s,\Sigma_s)$ is a compact manifold with portion $\Sigma_s$, and $\Sigma_s$ is also a barrier.
\end{proposition}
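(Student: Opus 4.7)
The plan is to realize $\{\Sigma_s\}_{s\in[0,\tau]}$ as the orbit of $\Sigma$ under the flow $F_s$ of a vector field $X\in\mathfrak{X}(M,\Sigma)$ with $X|_\Sigma=f\n$, where $\n$ is chosen to point into $M$ and $f>0$ on $\overline\Sigma$ is carefully engineered so that the identities of Lemma \ref{lem:derivative} simultaneously deliver area decrease, mean convexity of $\Sigma_s$ as a portion of $M_s$, and the strict acute angle condition.

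Since $\Sigma$ is unstable, the first eigenvalue $\lambda_1$ of the Robin problem (\ref{neumann problem}) is strictly negative with a positive first eigenfunction $f_1$. The direct choice $f=f_1$ gives $Lf=-\lambda_1 f>0$ and $Q(f,f)=\lambda_1\int_\Sigma f^2<0$, but the Robin boundary condition makes the right-hand side of (\ref{equ:acute angle}) vanish identically on $\partial\Sigma$, leaving only $O(s^2)$ information on the boundary angle. To break this, I would perturb to $f=f_1+\epsilon\psi$ for small $\epsilon>0$, where $\psi>0$ is smooth on $\overline\Sigma$ and satisfies $\partial_\eta\psi > h^{\partial M}(\n,\n)\psi$ on $\partial\Sigma$ (easily built in a collar of $\partial\Sigma$ by a linear function in the outward variable). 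For small $\epsilon$ this preserves $f>0$, $Lf>0$, and $Q(f,f)<0$, while producing the strict inequality $\partial_\eta f - h^{\partial M}(\n,\n)f > 0$ on $\partial\Sigma$.

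With $\n$ oriented inward to $M$, I extend $f\n$ to a vector field $X\in\mathfrak{X}(M,\Sigma)$; this is possible because the free boundary condition places $\n|_{\partial\Sigma}$ in $T\partial M$, so $f\n$ is already tangent to $\partial M$ along $\partial\Sigma$. Setting $\Sigma_s:=F_s(\Sigma)$, items (1) and (2) follow for small $\tau$: $f>0$ makes $F_s|_\Sigma$ move $\Sigma$ injectively off itself; tangency of $X$ to $\partial M$ near $\partial\Sigma$ keeps $\partial\Sigma_s\subset\partial M$; and the inward orientation of $\n$ lifts $\Sigma$ off each interior touching point $p\in\mathrm{int}(\Sigma)\cap\partial M$, so $\Sigma_s$ is properly embedded in $M$. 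Item (3) follows from the first variation vanishing at the FBMH $\Sigma$ combined with $\tfrac{d^2}{ds^2}\area(\Sigma_s)|_{s=0}=Q(f,f)<0$, giving strict monotone decrease on $[0,\tau]$ after shrinking $\tau$; equivalently, the first variation for $s>0$ is $-\int_{\Sigma_s} H_s\langle X,\n_s\rangle + O(s)$, which is strictly negative since $H_s\approx sLf>0$ and $\langle X,\n_s\rangle\approx f>0$. The barrier conditions then come directly from Lemma \ref{lem:derivative}: $\nabla_X H|_\Sigma = Lf>0$ combined with $H|_\Sigma=0$ yields $H_s>0$ in the $\n_s$ direction (which is outward of $M_s$), i.e.\ mean convexity of $\Sigma_s$; while $\nabla_X u|_{\partial\Sigma} = fh^{\partial M}(\n,\n)-\partial_\eta f<0$ combined with $u|_{\partial\Sigma}=0$ yields the strict acute angle $\langle\n_s,\nu_{\partial M}\rangle<0$ on $\partial\Sigma_s$ for small $s>0$.

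The main obstacle is the perturbation step. The first-eigenfunction choice $f_1$ fails to give strict acuteness because the Robin BC cancels \emph{exactly} in the boundary identity of Lemma \ref{lem:derivative}, and the perturbation must simultaneously preserve positivity of $f$ and $Lf$, negativity of $Q(f,f)$, and break the Robin equality in the correct direction. A secondary technical issue, which is the new feature beyond the proper case treated in \cite{Wang}, is the touching-set analysis: one must verify that the inward-$M$ orientation of $\n$ is globally consistent and that the flow $F_s$ actually lifts $\Sigma$ into $\mathrm{int}(M)$ at each touching point, rather than pushing it out of $M$ or out of the extended ambient $\widetilde M$.
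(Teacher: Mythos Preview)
Your strategy is the same as the paper's: produce a positive $f$ with $Lf>0$ and a strict boundary inequality, then flow $\Sigma$ by $f\mathbf n$ and read off the barrier properties from Lemma~\ref{lem:derivative}. The one substantive difference is how you obtain $f$. You perturb the first eigenfunction additively, $f=f_1+\epsilon\psi$; the paper instead perturbs the eigenvalue problem itself, taking $f$ to be the first eigenfunction of the form $Q_\epsilon$ obtained from $Q$ by replacing $L$ with $L-\epsilon$ and $h^{\partial M}(\mathbf n,\mathbf n)$ with $h^{\partial M}(\mathbf n,\mathbf n)-\epsilon$. Both routes work; the paper's is slightly cleaner because the strict inequalities $Lf>\epsilon f>0$ and $\partial_\eta f=(h^{\partial M}(\mathbf n,\mathbf n)-\epsilon)f$ drop out automatically without having to control $L\psi$. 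Your observation that with $\mathbf n$ chosen inward the flow lifts $\Sigma$ off the touching set into $\mathrm{int}(M)$ (so that one may take $\Sigma_s=F_s(\Sigma)$ rather than the paper's $F_s(\Sigma)\cap M$) is correct and a nice simplification.

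There is, however, an orientation error in your angle computation. With $\mathbf n$ inward to $M$ and the flow moving in the $\mathbf n$ direction, $M_s$ lies on the $\mathbf n_s$-side of $\Sigma_s$, so $\mathbf n_s$ is the \emph{inward} normal of $M_s$ along $\Sigma_s$, not the outward one as you write. The barrier condition $\langle\nu_{\partial N},\mathbf n_T\rangle<0$ uses the \emph{outward} normal $\mathbf n_T=-\mathbf n_s$, hence requires $u=\langle\mathbf n_s,\nu_{\partial M}\rangle>0$, i.e.\ $\nabla_X u=fh^{\partial M}(\mathbf n,\mathbf n)-\partial_\eta f>0$. So the correct boundary inequality is $\partial_\eta f<h^{\partial M}(\mathbf n,\mathbf n)f$, the opposite of what you derive; your $\psi$ should satisfy $\partial_\eta\psi<h^{\partial M}(\mathbf n,\mathbf n)\psi$. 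This is a sign slip rather than a structural gap---reversing the inequality on $\psi$ fixes everything---but as written your $\Sigma_s$ would meet $\partial M$ at an obtuse angle inside $M_s$ and fail to be a barrier.
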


\begin{proof}
Since $\lambda_1(\Sigma)<0$, there is a positive function $f$ satisfying
\begin{equation}\label{equ:barrier function}
\left\{\begin{array}{ll}
        Lf>0 \ & \text{in\ } \Sigma\\
        \frac{\partial f}{\partial\eta}<h^{\partial M}(\n,\n)f\ & \text{on\ } \partial\Sigma.
        \end{array}\right.
\end{equation}

To see this, we define a perturbed quadratic form
\[Q_t(\phi,\phi)=-\int_\Sigma \phi (L-t)\phi\,d\mu+\int_{\partial \Sigma} \left(\phi \frac{\partial \phi}{\partial \eta}-\left(h^{\partial M}(\mf{n},\mf{n}) -t\right)\phi^2\right)\,ds,\]
for $t\in\mathbb R$.
Let $\phi_1$ be the first eigenfunction of (\ref{neumann problem}). Due to the fact that
\[ \lim_{t\rightarrow 0}Q_t(\phi_1,\phi_1)=Q(\phi_1,\phi_1)<0, \]
there is some constant $\epsilon>0$ small enough such that the first eigenvalue of $Q_\epsilon$ is negative. Let $f$ be the first eigenfunction of $Q_\epsilon$. Then $f>0$ and
\begin{equation*}
\left\{\begin{array}{ll}
        Lf-\epsilon f>0 \ & \text{in\ } \Sigma\\
        \frac{\partial f}{\partial\eta}=\left(h^{\partial M}(\n,\n)-\epsilon\right)f\ & \text{on\ } \partial\Sigma.
        \end{array}\right.
\end{equation*}
So $f$ satisfies (\ref{equ:barrier function}).

Notice that $(M,\partial M)$ can always be isometrically embedded into some closed manifold $\widetilde M$ with $\mathrm{dim}\widetilde M=\mathrm{dim} M$. Take $X\in\mathfrak X(M,\Sigma)$ such that $X\big|_{\Sigma}=-f\n$. Denote $(F_s)_{-\tau\leq s\leq\tau}$ as the family of diffeomorphisms of $\widetilde M$ corresponding to $X$. Set
\[\Sigma_s=F_s(\Sigma)\cap M.\]
Let us verify that these hypersufaces satisfy all requirements. (\ref{item:devide}) holds since $f$ is positive. (\ref{item:proper}) will be satisfied by choosing suitable $f$ and small $\tau$ even if $\Sigma$ touches $\partial M$ (see Figure III). It follows from (\ref{equ:acute angle})  and  (\ref{equ:positive mean curvature}) that $\Sigma_s$ is a barrier. By the second variation formula and shrinking $\epsilon$ smaller if needed, the area functional is decreasing.
\end{proof}

\begin{figure}[h]
\begin{center}
\def\svgwidth{0.4\columnwidth}
  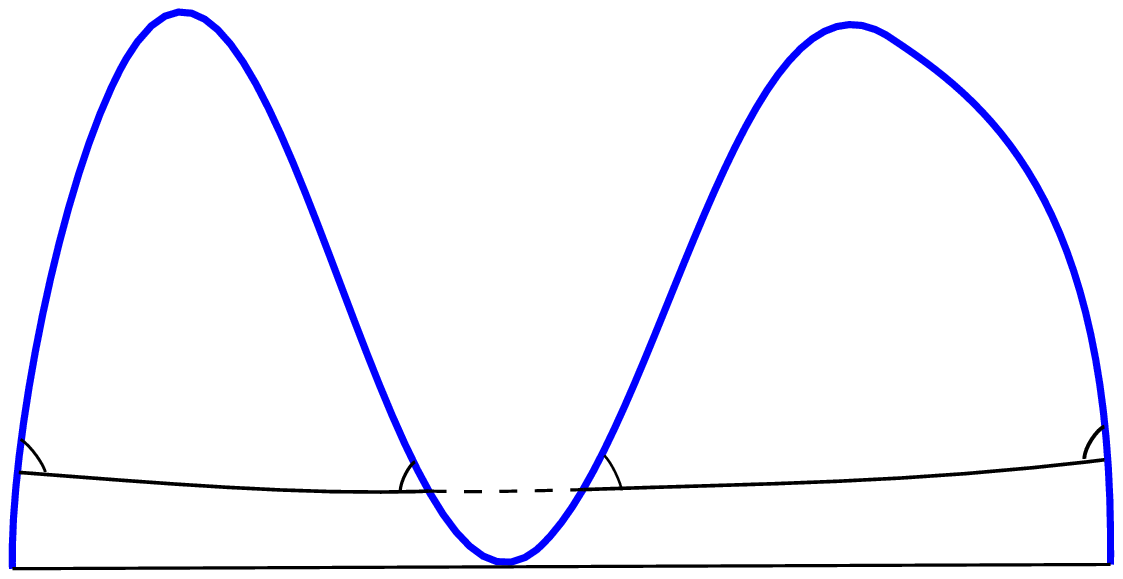
  \label{graph:barrier}
  \caption{}
\end{center}
\end{figure}

If $\Sigma$ is stable, there are no barriers inside $M$. However, we can construct a barrier outside $M$. Let $(M,\partial M,\Sigma)$ be a compact manifold with portion $\Sigma$. Let $\widetilde M$ be a closed manifold with $\mathrm{dim}\widetilde M=\mathrm{dim}M$ such that $M$ can be isometrically embedded into $\widetilde M$.
\begin{proposition}\label{prop:barrier:stable}
Suppose that $\lambda_1(\Sigma)>0$. Then there exists a family of hypersurface $\{\Sigma_t\}_{0\leq t\leq \tau}$ in $\widetilde M$ for $\tau$ small enough such that
\begin{enumerate}
  \item \label{item:devide:stable} $\Sigma_0=\Sigma$ and $\Sigma_s\cap M=\emptyset$ for $s>0$;
  \item \label{item:proper:stable}each $\Sigma_t$ is a properly embedded hypersurface with boundary;
  \item \label{item:area:stable} $\lim_{t\rightarrow 0}\area(\Sigma_t)=\area(\Sigma)$.
\end{enumerate}
If setting
\[ M_s:=\cup_{0\leq t\leq s}\Sigma_t\cup M,\]
we also have that $(M_s,\partial M_s,\Sigma_s)$ is a compact manifold with portion $\Sigma_s$, where $\Sigma_s$ is also a barrier.
\end{proposition}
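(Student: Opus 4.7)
The argument runs in close parallel to that of Proposition \ref{prop:barrier:unstable}: the plan is to reverse the direction of the flow so that it exits $M$, and to use the strict positivity of $\lambda_1(\Sigma)$ in place of its negativity. The first step is to produce a smooth positive function $f$ on $\Sigma$ satisfying the reverse of (\ref{equ:barrier function}):
\[
Lf < 0 \text{ in } \Sigma, \qquad \frac{\partial f}{\partial \eta} > h^{\partial M}(\n,\n)\, f \text{ on } \partial \Sigma.
\]
I would use the same perturbation trick as in Proposition \ref{prop:barrier:unstable}, but shifting the quadratic form in the opposite direction: consider
\[
Q_{-\epsilon}(\phi,\phi) = -\int_\Sigma \phi (L + \epsilon)\phi \, d\mu + \int_{\partial \Sigma}\bigl(\phi\,\partial_\eta\phi - (h^{\partial M}(\n,\n) + \epsilon)\phi^2\bigr)\, ds.
\]
Its first eigenvalue tends to $\lambda_1 > 0$ as $\epsilon \to 0$, so by continuity it stays strictly positive for all sufficiently small $\epsilon > 0$; a positive first eigenfunction $f$ then satisfies both strict inequalities above.

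Next, isometrically embed $M$ into a closed manifold $\widetilde M$ of the same dimension and choose $X \in \mathfrak X(M,\Sigma)$ with $X|_\Sigma = f\n$, where $\n$ is the unit normal to $\Sigma$ pointing out of $M$; since $\Sigma$ meets $\partial M$ orthogonally, $\n$ is tangent to $\partial M$ along $\partial \Sigma$, so this choice of $X$ is admissible. Let $(F_s)_{0\leq s\leq \tau}$ be the induced flow on $\widetilde M$ and set $\Sigma_s = F_s(\Sigma)$. For $\tau$ sufficiently small, each $\Sigma_s$ with $s > 0$ is pushed out of $M$, yielding (\ref{item:devide:stable}) and (\ref{item:proper:stable}), while the area continuity (\ref{item:area:stable}) is immediate from the smoothness of the flow. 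For the barrier properties, Lemma \ref{lem:derivative} applied to $X = f\n$ gives
\[
\partial_s H_s\big|_{s=0} = Lf < 0, \qquad \partial_s u\big|_{\partial \Sigma,\, s=0} = f\,h^{\partial M}(\n,\n) - \partial_\eta f < 0,
\]
so $H_s < 0$ and $u_s < 0$ for small $s > 0$. With respect to the outward normal of $M_s = M \cup \bigcup_{0 \leq t \leq s}\Sigma_t$ along $\Sigma_s$, these inequalities translate precisely into mean convexity of $\Sigma_s$ and the angle condition $\langle \nu_{\partial M_s},\,\n_{\Sigma_s}\rangle < 0$ required by Definition \ref{D:manifold with boundary and portion}.

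The main subtlety, as in the unstable case, is the potential touching set $\mathrm{int}(\Sigma)\cap \partial M$: near such points the flow must carry $\Sigma$ across $\partial M$ into $\widetilde M\setminus M$, and I must verify that $\Sigma_s$ does not fold back into $M$ or lose proper embeddedness. This is handled by taking $\tau$ small enough and invoking the tangentiality of $X$ to $\partial M$ in a neighborhood of $\partial \Sigma$; this latter property also guarantees that in a neighborhood of $\partial \Sigma_s$ the geometry is controlled entirely by the derivative identities above, so that the angle condition persists without further correction.
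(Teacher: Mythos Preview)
Your proof is correct and follows essentially the same approach as the paper: perturb the quadratic form to $Q_{-\epsilon}$, use positivity of its first eigenvalue to obtain a positive $f$ with $Lf<0$ and $\partial_\eta f>(h^{\partial M}(\n,\n))f$, then flow by $X|_\Sigma=f\n$ and invoke Lemma~\ref{lem:derivative} to verify the barrier conditions. Your invocation of eigenvalue continuity is in fact cleaner than the paper's phrasing, which tests only the single function $\phi_1$---a step that suffices to show negativity of a first eigenvalue but not positivity.
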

\begin{proof}
Since $\lambda_1(\Sigma)>0$, there is a positive function $f$ satisfying
\begin{equation}\label{equ:barrier function:stable}
\left\{\begin{array}{ll}
        Lf<0 \ & \text{in\ } \Sigma\\
        \frac{\partial f}{\partial\eta}>h^{\partial M}(\n,\n)f\ & \text{on\ } \partial\Sigma.
        \end{array}\right.
\end{equation}

Let $\phi_1$ be the first eigenfunction of (\ref{neumann problem}). Due to the fact that
\[ \lim_{t\rightarrow 0}Q_t(\phi_1,\phi_1)=Q(\phi_1,\phi_1)>0, \]
there exists $\epsilon>0$ small enough such that the first eigenvalue of $Q_{-\epsilon}$ is positive. Let $f$ be the first eigenfunction of $Q_\epsilon$. Then $f>0$ and
\begin{equation*}
\left\{\begin{array}{ll}
        Lf+\epsilon f<0 \ & \text{in\ } \Sigma\\
        \frac{\partial f}{\partial\eta}=\left(h^{\partial M}(\n,\n)+\epsilon\right)f\ & \text{on\ } \partial\Sigma.
        \end{array}\right.
\end{equation*}
So $f$ satisfies (\ref{equ:barrier function:stable}).

Take $X\in\mathfrak X(M,\Sigma)$ such that $X\big|_{\Sigma}=f\n$. Denote $(F_s)_{0\leq s\leq\tau}$ as the family of diffeomorphisms of $\widetilde M$ corresponding to $X$. Set
\[\Sigma_s=F_s(\Sigma).\]
Let us verify that these hypersufaces satisfy all requirements. (\ref{item:devide:stable}) holds since $f$ is positive. (\ref{item:proper:stable}) will be satisfied by choosing suitable $f$ and small $\tau$ even if $\Sigma$ touches $\partial M$. It follows from  (\ref{equ:acute angle}) and (\ref{equ:positive mean curvature}) that $\Sigma_s$ is a barrier (see Figure IV).
\end{proof}

\begin{figure}[h]
\begin{center}
\def\svgwidth{0.4\columnwidth}
  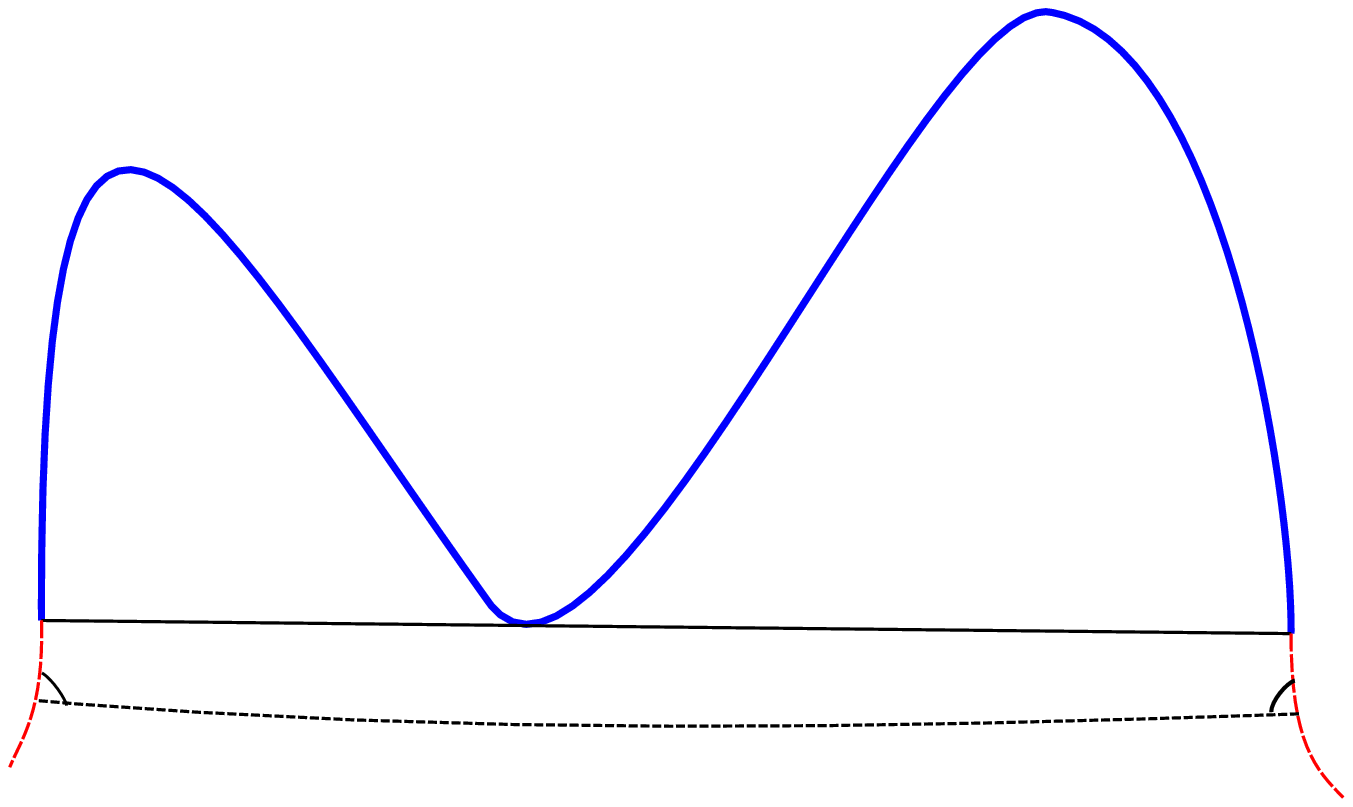
  \label{graph:barrier_stable}
  \caption{}
\end{center}
\end{figure}

\subsection{Bumpy Metric Theorem}\label{thm:bumpy}
For any smooth manifold $N$, White (\citelist{\cite{Whi91}\cite{Whi17}}) proved that a generic $C^k$ ($k\geq 3$ or $k=\infty$) metric on $N$ is \emph{bumpy} in the sense that no closed minimal submanifolds of $N$ has a nontrivial Jacobi field. In this paper, we need the following version of the bumpy metric theorem for FBMHs which is essentially due to Ambrozio-Carlotto-Sharp \cite{ACS17}. 
\begin{theorem}(\cite{ACS17}*{Theorem 9})
Let $\wti{M}^{n+1}$ be a smooth closed manifold and $N^n$ be a smooth embedded closed hypersurface in $\wti{M}$.  Suppose that $k$ is an integer $\geq 3$ or that $k=\infty$. Then a generic $C^k$ Riemannian metric on $\wti{M}$ is bumpy in the following sense: if $\Sigma^n$ is an embedded free boundary minimal hypersurface in $\wti{M}$ with free boundary lying on $N$, then $\Sigma$ or its finite-sheeted covering has no non-trivial Jacobi fields.
\end{theorem}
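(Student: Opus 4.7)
The plan is to adapt White's bumpy metric theorem \cite{Whi91,Whi17} to the free boundary setting via a Sard--Smale argument on a universal moduli space, following \cite{ACS17}. Fix a diffeomorphism type of compact $n$-manifold $\Sigma$ with boundary and form the universal moduli space
\[ \mathcal{Z} = \{(\phi,g) : g\in \mathrm{Met}^k(\widetilde{M}),\ \phi:\Sigma\hookrightarrow \widetilde{M} \text{ is an FBMH with } \phi(\partial\Sigma)\subset N\}/\mathrm{Diff}(\Sigma,\partial\Sigma). \]
Near a reference pair $(\phi_0,g_0)$, I would parametrize nearby embeddings as normal graphs $u$ over $\phi_0(\Sigma)$, so the FBMH conditions become the mean-curvature equation $H_g(u)=0$ in the interior together with the orthogonality condition $\langle \nu_{\Sigma_u},\nu_N\rangle=0$ on $\partial\Sigma$.

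Linearizing at $(0,g_0)$ in the direction $(u,h)$ produces the Jacobi operator $L$ applied to $u$ together with the Robin boundary condition $\partial_\eta u = h^N(\n,\n)\,u$, plus a metric-variation contribution from $h$. The Jacobi--Robin problem is elliptic and Fredholm of index zero on $C^{2,\alpha}$ spaces adapted to the corner $\partial\Sigma \subset N$. The essential transversality step is surjectivity of the full linearization in $(u,h)$: by unique continuation, any non-trivial Jacobi field $w$ is non-zero on some open interior set, so one can build a compactly supported metric variation $h$ in that set whose induced mean-curvature variation pairs non-trivially with $w$, killing every element of the cokernel. The implicit function theorem then endows $\mathcal{Z}$ with a $C^{k-2}$ Banach manifold structure and makes the projection $\pi:\mathcal{Z}\to \mathrm{Met}^k(\widetilde{M})$ a $C^{k-2}$ Fredholm map of index zero. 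Sard--Smale produces a comeager set of regular values $g$ of $\pi$, and at each such $g$ every FBMH of the fixed diffeomorphism type admits only the trivial Jacobi field satisfying the Robin boundary condition.

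To handle the finite-sheeted cover clause, I would repeat the argument on the enlarged parameter space of pairs $(\widetilde\Sigma,\pi)$ where $\pi:\widetilde\Sigma\to\Sigma$ is a $j$-sheeted cover of an embedded FBMH; this captures the non-orientable case, since a non-orientable embedded $\Sigma$ has an orientable double cover whose Jacobi operator may possess sign-changing kernel invisible on $\Sigma$ itself. Only countably many diffeomorphism types of $\Sigma$ and cover degrees $j$ arise, so intersecting the countably many resulting comeager sets yields a single comeager set of bumpy metrics on $\widetilde M$. The main obstacle is the analytic set-up near the corner $\partial\Sigma \subset N$: one has to choose function spaces in which the Jacobi--Robin problem is Fredholm of index zero, the normal-graph parametrization depends $C^{k-2}$ on $g$, and the reparametrization group $\mathrm{Diff}(\Sigma,\partial\Sigma)$ is properly modded out. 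This boundary regularity theory at the free-boundary corner is the technical heart carried out in \cite{ACS17}.
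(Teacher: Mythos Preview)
The paper does not actually prove this theorem; it cites it from \cite{ACS17} and adds only a remark. The sole ``proof content'' in the paper is the observation that while \cite{ACS17} states the result for properly embedded $\Sigma$ (i.e., $\Sigma\cap N=\partial\Sigma$), the argument there works verbatim when $\Sigma$ is allowed to penetrate $N$, because the proof identifies a tubular neighborhood of $\Sigma$ with the zero section of its normal bundle and all the analysis takes place in that model, where the ambient constraint hypersurface $N$ plays no role except through the boundary data on $\partial\Sigma$.

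Your sketch is a faithful outline of the Sard--Smale strategy carried out in \cite{ACS17} (and ultimately in White \cite{Whi91,Whi17}): build the universal moduli space of pairs $(\phi,g)$, linearize to obtain the Jacobi--Robin problem, verify transversality by producing metric variations supported away from $\partial\Sigma$ that hit any prescribed element of the cokernel, apply Sard--Smale to the Fredholm projection, and intersect over countably many diffeomorphism types and finite covers. So your proposal is correct and aligned with the cited source, but it goes well beyond what the present paper does. If you are writing up this paper, the appropriate ``proof'' here is simply a citation to \cite{ACS17} together with the remark about the normal-bundle identification handling the touching case; a full Sard--Smale argument would be out of place.
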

\begin{remark}
In this theorem, we allow $\Sigma$ to penetrate the constraint hypersurface $N$. In \cite{ACS17}, they only stated the result for $\Sigma$ which does not penetrate $N$, or equivalently $\Sigma\cap N=\partial\Sigma$. Nevertheless, the result also holds true in our situation since the proof in \cite{ACS17} identifies a tubular neighborhood of $\Sigma$ with that of the zero section in the normal bundle of $\Sigma$. Under this identification, the analytic arguments are exactly the same.
\end{remark}

\section{Stable free boundary minimal hypersurfaces}\label{sec:stable}
In this section, we consider stable FBMHs and we will show that there exists one which minimizes the area among all stable ones.

Let $M^{n+1}$ be a compact manifold with boundary $\partial M$ and $2\leq n\leq 6$. Let $\mc{O}_{\mathcal S}$ be the collection of all embedded compact orientable stable FBMHs in $M$ and $\mc{U}_{\mathcal S}$ be the collection of all non-orientable stable ones.  We define
\begin{equation}
\label{E:As}
\mc{A}_{\mathcal S}(M,\partial M)=\inf\big(\{\area(\Sigma),\,\Sigma\in \mc{O}_{\mathcal S}  \}\cup \{2\area(\Sigma),\,\Sigma\in \mc{U}_{\mathcal S} \}\big).
\end{equation}

If there are no such embedded stable hypersurfaces in $M$, then we will set $\mc{A}_{\mathcal S}(M,\partial M)=+\infty$.

We need the following compactness result (see \cites{GLZ16, ACS17}).
\begin{theorem}[\citelist{\cite{GLZ16}\cite{ACS17}}]
\label{ques:stable:compt:multi}
Let $\{\Sigma_k\}$ be a sequence of smooth  embedded stable free boundary minimal hypersurfaces in $M$ with $\sup_k\area(\Sigma_k)<+\infty$. Then up to a subsequence, $\Sigma_k$ converges smoothly and locally uniformly with multiplicity $m\in \mb{N}$ to a smooth  embedded stable free boundary minimal hypersurface $\Sigma$ in $M$. Moreover,
\begin{enumerate}
\item If $\Sigma$ is two-sided, then the multiplicity $m=1$ and $\Sigma_k$ is diffeomorphic to $\Sigma$ eventually.
\item If $\Sigma$ is one-sided, then either $m=1$ and $\Sigma_k$ is eventually diffeomorphic to $\Sigma$  or $m=2$ and $\Sigma_k$ is eventually diffeomorphic to the two-sided covering of $\Sigma$.
\end{enumerate}
\end{theorem}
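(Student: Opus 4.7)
The plan is to combine the uniform curvature estimates for stable FBMHs with a standard multiplicity dichotomy, following the strategy used by Sharp for closed ambient manifolds. First, I would invoke the uniform curvature estimates for stable FBMHs in the interior due to \cite{GLZ16} and up to the free boundary due to \cite{ACS17}. Combined with the hypothesis $\sup_k \area(\Sigma_k) < \infty$ and the monotonicity formula for stationary varifolds with free boundary, these give a uniform pointwise bound on $|A_{\Sigma_k}|$ and a uniform local mass bound throughout $M$. A subsequence of $\{\Sigma_k\}$, not relabeled, then converges as integer rectifiable varifolds to some $V = m|\Sigma|$ with $m \in \mb{N}$, and the uniform $C^2$ bound together with elliptic regularity for the minimal surface equation and the free boundary Neumann condition upgrades this to smooth, locally uniform convergence. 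The limit $\Sigma$ is an almost properly embedded smooth FBMH, and stability passes to $\Sigma$ by lower semicontinuity of the index form $Q$.

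For the two-sided case, I would pass to a connected component of each $\Sigma_k$ converging to $\Sigma$, and locally write it as $m$ disjoint normal graphs $u_k^1 < \cdots < u_k^m$ over $\Sigma$ using the global unit normal $\n$. If $m \geq 2$, the renormalized lowest gap $w_k := (u_k^2 - u_k^1)/\|u_k^2 - u_k^1\|_{L^\infty(\Sigma)}$ satisfies a linear elliptic equation obtained by differencing the minimal surface equations for $u_k^1$ and $u_k^2$; its coefficients converge to those of the Jacobi operator $L$, and its Robin boundary condition converges to $\partial u/\partial \eta = h^{\partial M}(\n,\n) u$. Schauder estimates give smooth subsequential convergence $w_k \to w$ with $\|w\|_{L^\infty} = 1$, and the Hopf boundary point lemma forces $w > 0$ on $\Sigma$. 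Hence $u_k^1$ and $u_k^2$ extend to globally defined, disjoint graphs over $\Sigma$ for $k$ large, contradicting the connectedness of $\Sigma_k$. Therefore $m = 1$, and the graphical representation yields eventual diffeomorphism of $\Sigma_k$ with $\Sigma$, proving (1).

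For the one-sided case I would work on the orientation double cover $\pi\colon \widetilde{\Sigma} \to \Sigma$, which carries a global unit normal. The preimage $\pi^{-1}(\Sigma_k)$ in a tubular neighborhood of $\widetilde{\Sigma}$ decomposes into normal graphs, on which the sheet-exchanging involution of $\pi$ acts. A connected $\Sigma_k$ then corresponds either to a single involution-invariant graph, giving $m = 1$ with $\Sigma_k$ diffeomorphic to $\Sigma$, or to a pair of graphs exchanged by the involution, giving $m = 2$ with $\Sigma_k$ diffeomorphic to $\widetilde{\Sigma}$; multiplicity $\geq 3$ is ruled out by the same renormalized-gap argument applied on $\widetilde{\Sigma}$. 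The main obstacle throughout is the possible touching set $\mathrm{int}(\Sigma) \cap \partial M$: the graphical representation and the linearization leading to the Robin condition must be carried out in boundary charts compatible with interior touching, which is exactly what the up-to-the-boundary curvature estimates and structural convergence theorems of \cite{GLZ16} and \cite{ACS17} provide for improper stable FBMHs.
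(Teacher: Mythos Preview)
This theorem is not proved in the paper; it is quoted from \cite{GLZ16} and \cite{ACS17} and used as a black box (see the sentence ``We need the following compactness result'' immediately preceding the statement). There is therefore no proof in the paper to compare against. Your sketch is a reasonable outline of how the argument in those references goes: uniform curvature estimates from stability together with the area bound yield smooth subsequential convergence to a stable FBMH, and the multiplicity dichotomy then follows from the global sheet structure of an embedded $\Sigma_k$ over the limit.

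One remark on your argument for (1): the renormalized-gap / Jacobi field step is unnecessary. Once $\Sigma$ is two-sided, the $m$ local sheets of the embedded $\Sigma_k$ are globally ordered by the unit normal $\n$ and hence patch together into $m$ globally defined disjoint graphs over $\Sigma$; connectedness of $\Sigma_k$ forces $m=1$ immediately. The positive-Jacobi-field device is the crucial tool in Sharp's bounded-index compactness (where it detects a drop in index or produces a nontrivial Jacobi field on the limit), but in the purely stable setting it plays no role in bounding the multiplicity. The same comment applies to your case (2): multiplicity $m\geq 3$ is excluded because the deck involution on the $m$ ordered sheets over $\widetilde\Sigma$ acts by $j\mapsto m+1-j$, which has more than one orbit as soon as $m\geq 3$, contradicting connectedness of $\Sigma_k$; no gap argument is needed.
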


Next, using Theorem \ref{ques:stable:compt:multi}, we show that $\mc{A}_{\mathcal S}(M,\partial M)$ is realized if it is finite.

\begin{theorem}
\label{ques:stable:realize}
$\mc{A}_{\mathcal S}(M,\partial M)$ is realized if it is finite: either there exists $\Sigma\in \mc{O}_\mathcal S$ such that $\area(\Sigma)=\mc{A}_\mathcal S(M,\partial M)$ or $\Sigma\in \mc{U}_\mathcal S$ such that $2\area(\Sigma)=\mc{A}_\mathcal S(M,\partial M)$.
\end{theorem}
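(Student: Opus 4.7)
The plan is to take a minimizing sequence and invoke the stable compactness theorem. First I fix a sequence $\{\Sigma_k\} \subset \mc{O}_{\mathcal{S}} \cup \mc{U}_{\mathcal{S}}$ with the weighted area ($\area(\Sigma_k)$ if $\Sigma_k$ is orientable, $2\area(\Sigma_k)$ if non-orientable) converging to $\mc{A}_{\mathcal{S}}(M,\partial M)$. The finiteness hypothesis gives $\sup_k \area(\Sigma_k) < \infty$, so Theorem \ref{ques:stable:compt:multi} applies: after extracting a subsequence, $\Sigma_k$ converges smoothly and locally uniformly to a smooth embedded stable FBMH $\Sigma$ in $M$ with some integer multiplicity $m \in \{1,2\}$, and $\area(\Sigma_k) \to m \cdot \area(\Sigma)$.

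Next I handle the three cases allowed by Theorem \ref{ques:stable:compt:multi}. If $\Sigma$ is two-sided, then $m = 1$ and $\Sigma_k$ is eventually diffeomorphic to $\Sigma$; hence $\Sigma_k$ is orientable for large $k$, $\area(\Sigma_k) \to \area(\Sigma)$, and $\Sigma \in \mc{O}_{\mathcal{S}}$ realizes $\mc{A}_{\mathcal{S}}(M,\partial M)$. If $\Sigma$ is one-sided with $m = 1$, then $\Sigma_k$ is eventually non-orientable and diffeomorphic to $\Sigma$, so $2\area(\Sigma_k) \to 2\area(\Sigma)$, and $\Sigma \in \mc{U}_{\mathcal{S}}$ realizes the infimum. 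Finally, if $\Sigma$ is one-sided with $m = 2$, then $\Sigma_k$ is eventually diffeomorphic to the orientable double cover of $\Sigma$; hence $\Sigma_k \in \mc{O}_{\mathcal{S}}$ eventually and $\area(\Sigma_k) \to 2\area(\Sigma)$, and again $\Sigma \in \mc{U}_{\mathcal{S}}$ realizes the infimum.

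The main, and essentially only, obstacle is to ensure that the compactness step applies in the almost proper setting, where both the $\Sigma_k$ and the limit $\Sigma$ are allowed to touch $\partial M$. Since Theorem \ref{ques:stable:compt:multi} is stated in exactly this generality (drawing on \cite{GLZ16} and \cite{ACS17}), once that input is in hand the remainder of the proof is a clean bookkeeping exercise matching orientability of the $\Sigma_k$ and of the limit $\Sigma$ with the weighting conventions in the definition of $\mc{A}_{\mathcal{S}}(M,\partial M)$.
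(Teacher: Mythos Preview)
Your proof is correct and follows essentially the same approach as the paper: take a minimizing sequence, apply the stable compactness Theorem~\ref{ques:stable:compt:multi}, and match the orientability/multiplicity of the limit with the weighting in the definition of $\mc{A}_{\mathcal S}$. The only cosmetic difference is that the paper first passes to a subsequence lying entirely in $\mc{O}_{\mathcal S}$ or entirely in $\mc{U}_{\mathcal S}$ and then splits on the side of the limit, whereas you apply compactness to the mixed sequence directly and split on the three possible $(\text{side},m)$ outcomes for the limit; both organizations cover the same cases and lead to the same conclusion.
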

\begin{proof}
Since $\mc{A}_\mathcal S(M,\partial M)$ is finite, we assume  that there is a sequence $\{\Sigma_k\}_{k\in \mb{N}}$ in $\mc{O}_\mathcal S$ (or in $\mc{U}_\mathcal S$)  such that $\area(\Sigma_k)\to \mc{A}_\mathcal S(M,\partial M)$ (or $2\area(\Sigma_k)\to \mc{A}_\mathcal S(M,\partial M)$).

First, we consider the case when the sequence $\{\Sigma_k\}_{k\in \mb{N}}$ is in $\mc{O}_\mathcal S$. We may just apply Theorem \ref{ques:stable:compt:multi} or we can argue as follows. Since $\{\Sigma_k\}_{k\in \mb{N}}$ is a sequence of smooth embedded stable minimal hypersurfaces with uniform area bound, we have uniform curvature estimates for $\{\Sigma_k\}_{k\in \mb{N}}$ (see \cite{GLZ16}). Then
the compactness result in \cite{GLZ16} (see also \cite{ACS17}) implies that, after passing to a subsequence, $\{\Sigma_k\}_{k\in \mb{N}}$ converges smoothly and locally uniformly (possibly with multiplicity) to a smooth embedded stable FBMH $\Sigma$ in $M$. We consider two scenarios:
\begin{itemize}
  \item If $\Sigma$ is two-sided, then for $k$ sufficiently large, $\Sigma_k$ can be written as an entire graph over $\Sigma$ and the convergence is of multiplicity one. In this case, we have $\area(\Sigma)=\lim_{k\to \infty} \area(\Sigma_k)=\mc{A}_\mathcal S(M,\partial M)$.
  \item  If $\Sigma$ is one-sided, then for $k$ sufficiently large, $\Sigma_k$ is an entire two-sheeted graph over $\Sigma$ and the convergence is of multiplicity two. In this case, we have $2\area(\Sigma)=\lim_{k\to \infty}\area(\Sigma_k)=\mc{A}_\mathcal S(M,\partial M)$.
\end{itemize}

Next, we assume that the sequence $\{\Sigma_k\}_{k\in \mb{N}}$ is in $\mc{U}_\mathcal S$. Again by the compactness result, Theorem \ref{ques:stable:compt:multi}, a subsequence of $\{\Sigma_k\}_{k\in \mb{N}}$ converges smoothly to a smooth embedded stable FBMH $\Sigma$ in $M$. Note that $\Sigma$ must be one-sided. Moreover, the convergence must be of multiplicity one. Otherwise, we would have that convergence is of multiplicity two and $\Sigma_k$ is eventually diffeomorphic to the two-sided covering of $\Sigma$, which contradicts the assumption that $\{\Sigma_k\}_{k\in \mb{N}}$ is in $\mc{U}_S$. Hence, we have $2\area(\Sigma)=\lim_{k\to \infty} 2\area(\Sigma_k)=\mc{A}_\mathcal S(M,\partial M)$.

This completes the proof.
\end{proof}

As a corollary, we have the following:


\begin{corollary}\label{cor:lower}
Let $M^{n+1}$ be a smooth compact manifold with boundary $\partial M$ and $2\leq n\leq 6$. Then $\mc{A}_{\mathcal S}(M,\partial M)$ is lower-semi continuous with respect to $C^q$ ($q\geq 2$) metrics on $M$.
\end{corollary}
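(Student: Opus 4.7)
The plan is to combine the realization result (Theorem~\ref{ques:stable:realize}) with the stable compactness theorem (Theorem~\ref{ques:stable:compt:multi}), both applied to a sequence of metrics converging in $C^q$. Given $g_k \to g$ in $C^q$, I will abbreviate $\mathcal{A}_\mathcal{S}^{g} := \mathcal{A}_\mathcal{S}(M,\partial M)$ computed with respect to the metric $g$ (and similarly for $g_k$), and aim to prove $\mathcal{A}_\mathcal{S}^g \leq L := \liminf_k \mathcal{A}_\mathcal{S}^{g_k}$. The case $L = +\infty$ is vacuous, so after passing to a subsequence I may assume $L < \infty$ and $\mathcal{A}_\mathcal{S}^{g_k} \to L$.

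Next, by Theorem~\ref{ques:stable:realize} applied to each metric $g_k$, I extract a $g_k$-stable FBMH $\Sigma_k$ realizing $\mathcal{A}_\mathcal{S}^{g_k}$. Passing to a further subsequence by pigeonholing, I may assume all $\Sigma_k$ belong to the same class (either $\mathcal{O}_\mathcal{S}$ or $\mathcal{U}_\mathcal{S}$), so that $\area_{g_k}(\Sigma_k)$ is uniformly bounded by $L$ or $L/2$. Since $g_k \to g$ in $C^q$, the $g$-areas are likewise uniformly bounded, and the curvature estimates of \cites{GLZ16, ACS17}, whose constants depend continuously on finitely many derivatives of the ambient metric, will yield uniform $C^2$ bounds on $\Sigma_k$.

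Third, I would invoke the smooth convergence statement of Theorem~\ref{ques:stable:compt:multi}, generalized to the varying-metric setting, to extract a smooth subsequential limit $\Sigma$ which is a $g$-stable FBMH and to which $\Sigma_k$ converges with some multiplicity $m \in \{1,2\}$. The case analysis mirrors Theorem~\ref{ques:stable:compt:multi}: if $\Sigma_k \in \mathcal{O}_\mathcal{S}$ and $\Sigma$ is two-sided, then $m=1$ and $\area_g(\Sigma) = L$; if $\Sigma_k \in \mathcal{O}_\mathcal{S}$ and $\Sigma$ is one-sided, then $m=2$ and $2\area_g(\Sigma) = L$; if $\Sigma_k \in \mathcal{U}_\mathcal{S}$, the non-orientability forces $m=1$ and $\Sigma$ to be one-sided, giving $2\area_g(\Sigma) = L$. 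In every scenario one obtains $\mathcal{A}_\mathcal{S}^g \leq L$, which is the desired lower-semi-continuity.

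The main obstacle I anticipate is verifying that the curvature estimates and smooth compactness of \cites{GLZ16, ACS17} remain uniform for metrics varying in $C^q$. This should reduce to the observation that those proofs are local in nature and depend only on finitely many derivatives of the ambient metric, all controlled uniformly in $k$. An alternative route around this technicality is to pull each $\Sigma_k$ back through a diffeomorphism identifying $g_k$ with $g$ up to a $C^q$ perturbation of size $o(1)$, reducing the situation to a fixed background metric to which Theorem~\ref{ques:stable:compt:multi} applies directly.
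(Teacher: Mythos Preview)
Your proposal is correct and follows essentially the same route as the paper's proof: reduce to a finite $\liminf$, realize each $\mathcal A_{\mathcal S}^{g_k}$ by a stable FBMH $\Sigma_k$ via Theorem~\ref{ques:stable:realize}, invoke the curvature estimates of \cite{GLZ16} and the compactness of Theorem~\ref{ques:stable:compt:multi} to pass to a smooth stable limit $\Sigma$, and then run the orientation/multiplicity case analysis. If anything you are more scrupulous than the paper, which applies the curvature estimates and compactness across varying metrics without comment; your remark that these results depend only on finitely many derivatives of the ambient metric (hence are uniform along a $C^q$-convergent sequence) is exactly the justification that the paper leaves implicit.
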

\begin{proof}
Let $g$ be a $C^q$ metric on $M$. Suppose that $\{g_k\}$ is a sequence of $C^q$ metrics on $M$ converging to $g$. If we have
\[\liminf_{k\to \infty}\mc{A}_{\mathcal S}(M,\partial M, g_k)=\infty,\] then the conclusion follows directly. Hence, we may assume that
\begin{equation}\label{equ:AS}
\liminf_{k\to \infty}\mc{A}_{\mathcal S}(M,\partial M, g_k)=C_0
\end{equation}
where $C_0$ is a constant. By (\ref{equ:AS}), there exists a subsequence of $\{g_k\}$ (still denoted by $\{g_k\}$) such that
\begin{equation}\label{equ:AS:C}
\lim_{k\to \infty}\mc{A}_{\mathcal S}(M,\partial M, g_k)=C_0.
\end{equation}
By Theorem \ref{ques:stable:realize}, we can assume that each $\mc{A}_{\mathcal S}(M,\partial M, g_k)$ is realized by a stable FBMH $\Sigma_k$ in $(M,\partial M,g_k)$. Moreover, using (\ref{equ:AS:C}), we can assume that $\sup_{k}\area(\Sigma_k)$ is finite. 
Let $A_k$ denote the second fundamental form of $\Sigma_k$ with respect to $g_k$. Since $\Sigma_k$ is stable and has uniform area bound, by the curvature estimates in \cite{GLZ16}, we obtain that $|A_k|$ is uniformly bounded. Then, up to a subsequence, $\Sigma_k$ converges smoothly to a stable FBMH $\Sigma$ in $(M,\partial M,g)$.

If $\Sigma$ is two-sided, then we have
\[\mc{A}_{\mathcal S}(M,\partial M, g)\leq \area(\Sigma)=\lim_{k\to \infty} \area(\Sigma_k)=\lim_{k\to \infty}\mc{A}_{\mathcal S}(M,\partial M, g_k)=C_0,\]
which implies that
\[\liminf_{k\to \infty}\mc{A}_{\mathcal S}(M,\partial M, g_k)\geq\mc{A}_{\mathcal S}(M,\partial M, g).\]
If $\Sigma$ is one-sided, then for $k$ sufficiently large, either $\Sigma_k$ is one-sided or $\Sigma_k$ is an entire two-sheeted graph over $\Sigma$. In both cases, we have
\[\mc{A}_{\mathcal S}(M,\partial M, g)\leq 2\area(\Sigma)=\lim_{k\to \infty} 2\area(\Sigma_k)=\lim_{k\to \infty}\mc{A}_{\mathcal S}(M,\partial M, g_k)=C_0,\]
which also implies that $\mc{A}_{\mathcal S}(M,\partial M)$ is lower semi-continuous.
\end{proof}

\section{Min-max theory for compact manifolds with boundary}\label{sec:min-max}
\subsection{Geometric Measure Theory}
First let us recall some basic notations from geometric measure theory; see \citelist{\cite{Sim83} \cite{LZ16}*{\S 3} \cite{LMN16}*{\S 2}}.

Let $(M^{n+1},\partial M,T)$ be a Riemannian manifold with boundary and portion. Assume that $(M,\partial M,T)$ is isometrically embedded in some $\R^N$ for $N$ large enough. Let $\mathcal R_k(M)$ be the space of integer rectifiable $k$-currents supporting in $M$. Set
\begin{gather*}
Z_k(M,\partial M)=\{K\in\mathcal R_k(M): \mathrm{spt}(\partial K)\subset\partial M\}.
\end{gather*}
$K$ and $S$ are \emph{in the same equivalent class} if $K,S\in Z_k(M,\partial M)$ and $K-S\in \mathcal R_k(\partial M)$. Denote $\mathcal Z_k(M,\partial M)$ as all the equivalent classes in $Z_k(M,\partial M)$. For any $\kappa\in \mathcal Z_k(M,\partial M)$, there is a \emph{canonical representative} $K\in \kappa$ such that $K\llcorner\partial M=0$.

An integer rectifiable current $K\in \mathcal R_k(M)$ is called an {\em integral current} if and only if $\partial K\in \mathcal R_{k-1}(M)$. The space of integral currents is denoted by $\mathbf I_k(M)$.

For any $K\in\mathcal R_k(M)$, denote  $|K|, \Vert K\Vert$ as the integer rectifiable varifolds and Radon measures associated with $K$, respectively. Given any hypersurface $\Sigma$ with possible non-empty boundary or an open set $\Omega\subseteq M$, we denote $\llbracket \Sigma\rrbracket ,\llbracket \Omega\rrbracket $, and $[\Sigma],[\Omega]$ as the integral currents and integral varifold, respectively.

Denote $\mathbf M$ as the mass norm on $\mathcal R_k(M)$ and $\mathcal F$ as the flat metric on it. In the space of relative cycles, the flat metric and mass norm are defined to be
\begin{gather*}
\mathcal F(\kappa_1,\kappa_2)=\inf\{\mathcal F(K_1,K_2):K_1\in\kappa_1,K_2\in \kappa_2\},\\
\mathbf M(\kappa)=\inf\{\mathbf M(K):K\in\kappa\}.
\end{gather*}

\subsection{Almgren-Pitts Settings}
In this part, we recall Almgren-Pitts min-max theory for compact manifolds with boundary, which is developed by Li-Zhou \cite{LZ16}.

In the following of the paper, we will focus on the one-parameter case, and the notations for cell complex will be restricted to this case.

In this part, $M$ is always a compact manifold with boundary $\partial M$ and portion $T$.
\begin{definition}[\cite{Zhou15}*{Definition $4.1$}]Set $I=[0,1]$.
\begin{enumerate}
  \item The $0$-complex $I_0=\{[0],[1]\}$;
  \item For any $j\in\mathbb N$, $I(1,j)$ has $0$-cells $\{[\frac{i}{3^j}]\}$ and $1$-cells $\{[\frac{i}{3^j},\frac{i+1}{3^j}]\}$ for $0\leq i\leq 3^j$. We always denote $I(1,j)_p$ as the set of $p$-cells of $I(1,j)$;
  \item Given $\alpha\in I(1,j)_1$, we denote $\alpha(k)_p$ as the $p$-complex of $I(1,j+k)$ contained in $\alpha$;
  \item The boundary homeomorphism $\partial: I(1,j)_1\rightarrow I(1,j)_0$ is $\partial[a,b]=[b]-[a]$;
  \item The distance function $\mathbf d:I(1,j)_0\times I(1,j)_0\rightarrow \mathbb N$ is $\mathbf d(x,y)=3^j|x-y|$.
  \item The map $\n(i,j):I(1,i)_0\rightarrow I(1,j)_0$ is defined to be the the way: for each $x\in I(1,i)_0$, $\n(i,j)(x)$ is the unique element of $I(1,j)_0$ such that
  \[\mathbf d(x,\n(i,j)(x))=\inf\{\mathbf d(x,y):y\in I(1,j)_0\}.\]
\end{enumerate}
\end{definition}


Let $A$ and $B$ be two subsets of $\mathcal Z_n(M,\partial M)$. $\phi$ is said to be a map into $(\mathcal Z_n(M,\partial M),A,B)$ if $\phi(0)\in A$ and $\phi(1)\in B$. $\phi$ is said to be a map into $(\mathcal Z_n(M,\partial M),A)$ if $\phi(0),\phi(1)\in A$.

The following homotopy relations were introduced by Pitts \cite{Pit76}*{\S 4.1}.
\begin{definition}[Homotopy for mappings]
Given two maps
\[\phi_i:I(1,j_i)_0\rightarrow (\mathcal Z_n(M,\partial M),\{\llbracket T\rrbracket\},\{0\})\]
for $i=1,2$ and $\delta>0$,
we say that $\phi_1$ \emph{is} $1$-\emph{homotopic to} $\phi_2$ \emph{with $\mathbf M$-fineness} $\delta$ if there exists $j_3>j_1,j_2$ and
\begin{equation*}
\psi: I(1,j_3)_0\times I(1,j_3)_0\rightarrow \mathcal Z_n(M,\partial M),
\end{equation*}
with
\begin{itemize}
  \item $\mathbf f_{\mathbf M}(\psi)\leq\delta$ with definition in \cite{LZ16}*{Definition 4.2};
  \item $\psi(i-1,x)=\phi_i(\n(j_3,j_i)(x)),i=1,2$;
  \item $\psi(I(1,j_3)_0\times \{0\})=\llbracket T\rrbracket$ and $\psi(I(1,j_3)_0\times\{1\})=0$.
\end{itemize}
\end{definition}

\begin{definition}
For a sequence of
\begin{equation*}
\phi_i:I(1,j_i)_0\rightarrow\big(\mathcal Z_n(M,\partial M),\{\llbracket T\rrbracket\},\{0\}\big),
\end{equation*}
$\{\phi_i\}$ is a $(1,\mathbf M)$-\emph{homotopy sequence of mappings into} $\big(\mathcal Z_n(M,\partial M),\{\llbracket T\rrbracket\},\{0\}\big)$ if $\phi_i$ is $1$-homotopic to $\phi_{i+1}$ with fineness $\delta_i\rightarrow 0$, and
\begin{equation*}
\sup_{i}\{\mathbf M(\phi_i(x)):x\in\mathrm{dmn}\phi_i\}<\infty.
\end{equation*}
\end{definition}

\begin{definition}[Homotopy for sequences of mappings]
Let $S_1=\{\phi_i^1\}$ and $S_2=\{\phi_i^2\}$ be two $(1,\mathbf M)$-homotopy sequences of mappings into $(\mathcal Z_n(M,\partial M),\{\llbracket T\rrbracket\},\{0\})$, we say that $S_1$ is homotopic to $S_2$ if $\phi_i^1$ is $1$-homotopic to $\phi_i^2$ with fineness $\delta_i\rightarrow 0$.
\end{definition}

Now assume that $T=\emptyset$. Denote $\pi^\sharp_1(\mathcal Z_n(M,\partial M,\mathbf M),\{0\})$ as the set which consists of all equivalent classes of $(1,\mathbf M)$-homotopy sequences of mappings into $(\mathcal Z_n(M,\partial M),\{0\})$. Similarly, we can define $\pi^\sharp_1(\mathcal Z_n(M,\partial M,\mathcal F),\{0\})$.

In \cite{Alm62}*{\S 3.2}, Almgren introduced a map from the space of equivalent classes to the top relative homology group:
\[ F:\pi^\sharp_1\left(\mathcal Z_n\left(M,\partial M,\mf{M}\right),\{0\}\right)\rightarrow H_{n+1}\left(M,\partial M\right).\]
In fact, $F$ is defined by adding all $\mathbf M$-isoperimetric choices (see \cite{LZ16}*{\S 3.2}) between adjacent slices of $\phi_i$ (for $i$ large enough). Almgren \cite{Alm62} further proved that $F$ is an isomorphism. We usually call $F$ the {\em Almgren's Isomorphism}.

For $\Pi\in\pi^\sharp_1(\mathcal Z_n(M,\partial M,\mf{M}),\{0\})$, and $S=\{\phi_i\}\in\Pi$, define
\begin{equation*}
\mathbf L(S)=\limsup_{i\rightarrow\infty}\max_{x\in\mathrm{dmn}\phi_i}\mathbf M\left(\phi_i\left(x\right)\right),
\end{equation*}
and the \emph{width of} $\Pi$
\begin{equation*}
\mathbf L(\Pi)=\inf_{S\in\Pi}\mathbf L(S).
\end{equation*}
\begin{theorem}[\cite{LZ16}*{Theorem 4.21, Theorem 5.2}]\label{thm:min-max}
Let $(M^{n+1},\partial M,g)$ be a compact manifold and $2\leq n\leq 6$. For any $\Pi\in\pi_1^\sharp\left(\mathcal Z_n\left(M,\partial M,\mathbf M\right),\{0\}\right)$, there exists a varifold $V$ such that
\begin{enumerate}
  \item $\Vert V\Vert(M)=\mathbf L(\Pi),$
  \item $V=\sum n_i[\Sigma_i]$ as a varifold, where $n_i\in\mathbb N$ and each $(\Sigma_i,\partial\Sigma_i)\subset(M,\partial M)$ is a connected, almost properly embedded free boundary minimal hypersurface.
\end{enumerate}
\end{theorem}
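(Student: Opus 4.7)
The plan is to follow the Almgren--Pitts discrete min-max scheme in its free boundary formulation. Three ingredients are needed: (i) a \emph{pull-tight} procedure that makes all critical slices of a minimizing sequence stationary with free boundary on $\partial M$; (ii) a \emph{combinatorial step} producing a critical slice which is almost-minimizing in every sufficiently small annulus; and (iii) a \emph{regularity theorem} identifying any stationary, almost-minimizing varifold with free boundary as a sum of smooth, almost properly embedded FBMHs, with total mass equal to $\mathbf L(\Pi)$.

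First I would carry out pull-tight. Let $S=\{\phi_i\}\in\Pi$ be a minimizing sequence and let $\mathbf C(S)$ denote the set of varifold limits $V=\lim|\phi_i(x_i)|$ along subsequences with $\mathbf M(\phi_i(x_i))\to\mathbf L(\Pi)$. The goal is to replace $S$ by a homotopic sequence so that every $V\in\mathbf C(S)$ satisfies $\delta V(X)=0$ for all $X\in\mathfrak X(M,\mathrm{spt}\Vert V\Vert)$, which is the correct notion of free-boundary stationarity in the almost proper setting. As in the closed case, one defines a continuous assignment $V\mapsto X_V\in\mathfrak X(M,\mathrm{spt}\Vert V\Vert)$ on the open complement of these stationary varifolds, with quantitative control of $\delta V(X_V)<0$; integrating its flow produces a deformation of $\phi_i$ that strictly decreases the mass of non-critical slices while preserving the endpoints $\llbracket T\rrbracket$ and $0$ and keeping $\mathbf M$-fineness under control. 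The only substantial departure from the closed case is the constraint $X_V\in\mathfrak X(M,\cdot)$, which forces tangentiality to $\partial M$ near transverse boundary points of $V$ while permitting arbitrary directions along the touching set.

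Next I would extract an almost-minimizing slice by a Pitts-type combinatorial argument. If no $V\in\mathbf C(S)$ were almost-minimizing in every sufficiently small annulus centered at every point, then by compactness one could cover $M$ by finitely many such annuli on each of which there exists a mass-decreasing $\mathcal F$-homotopy, and then paste these local homotopies into a global one-parameter homotopy using the interpolation and isoperimetric lemmas for $\mathcal Z_n(M,\partial M)$ developed in Li--Zhou $\S 4$. This would produce a homotopic sequence $S'\in\Pi$ with $\mathbf L(S')<\mathbf L(\Pi)$, contradicting the definition of the width. The outcome is a stationary varifold $V$ with $\Vert V\Vert(M)=\mathbf L(\Pi)$ that is almost-minimizing in small annuli at every point of $M$.

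The last step, and the main obstacle, is regularity. Interior regularity follows from Pitts' almost-minimizing regularity theorem combined with Schoen--Simon, which requires $2\leq n\leq 6$ and yields smooth embedded minimal sheets away from $\partial M$. Near a boundary point, and especially near a point of the touching set $\mathrm{int}(\Sigma)\cap\partial M$, I would construct free-boundary replacements in small half-annuli by solving the minimization problem with boundary sliding freely on $\partial M$ and allowing interior contact with $\partial M$, apply the stable compactness theorem (Theorem \ref{ques:stable:compt:multi}) to obtain smoothness of each replacement up to the boundary, and glue successive replacements using the strong maximum principle for almost properly embedded FBMHs. The main difficulty, absent in the convex-boundary setting of \cite{Wang}, is that without boundary convexity the replacement need not be proper, so the gluing argument must accommodate interior--boundary contact; this is where the maximum principle in the almost proper form proved in \cite{LZ16} is essential. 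The result is the decomposition $V=\sum n_i[\Sigma_i]$ with the required structure, and $\Vert V\Vert(M)=\mathbf L(\Pi)$ by construction.
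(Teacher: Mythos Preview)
The paper does not give its own proof of this theorem: it is quoted verbatim from Li--Zhou \cite{LZ16}*{Theorem 4.21, Theorem 5.2} and used as a black box. Your three-step outline (pull-tight, Pitts combinatorial argument, replacement-based regularity) is indeed the architecture of the Li--Zhou proof, so in that sense your proposal matches the source the paper defers to.

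That said, two imprecisions in your sketch are worth flagging. First, in the statement as written $T=\emptyset$ and both endpoints of the homotopy sequence are $0$; there is no slice $\llbracket T\rrbracket$ to preserve in pull-tight. Second, and more substantively, the space $\mathfrak X(M,\Sigma)$ of Definition~\ref{def:available vector} is tailored to a \emph{known} FBMH $\Sigma$ with an identified touching set, and allows vectors to point out of $M$ there. In the pull-tight step $V$ is an arbitrary varifold with no a priori boundary/touching decomposition, so the correct test class is the space of vector fields tangent to $\partial M$ along all of $\partial M$; this is exactly what Li--Zhou use, and the touching phenomenon enters only later, in the regularity step, through the maximum principle for almost properly embedded hypersurfaces. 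With those corrections your outline is an accurate summary of the cited proof.
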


\vspace{0.5em}
Now assume that $T\neq\emptyset$. Using $\mathbf M$-isoperimetric lemma \ref{lemma:M-isoperimetric} and Constancy Theorem \cite{Sim83}*{\S 26.27}, we can still define Almgren's map $F$ from $(1,\mathbf M)$-homotopy sequences into $(\mathcal Z_n(M,\partial M),\{\llbracket T\rrbracket\},\{0\})$ to $\mathbf I_{n+1}(M)$.
Precisely, we have the following:
\begin{lemma}\label{lemma:portion imply sweep}
Suppose that $T\neq \emptyset$. For each $(1,\mathbf M)$-homotopy sequence $S=\{\phi_i\}$ mapping into $\left(\mathcal Z_n\left(M,\partial M\right),\{\llbracket T\rrbracket\},\{0\}\right)$, $F(S)=-\llbracket M\rrbracket$. Moreover, all such $S$ are equivalent.
\end{lemma}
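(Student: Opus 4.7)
The plan is to pin down $F(S)$ as a specific integral $(n+1)$-current using the Constancy Theorem, and then leverage the base-case Almgren isomorphism to deduce uniqueness of the homotopy class.

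\emph{Computing $F(S)$.} Fix $S=\{\phi_i\}$ with $\phi_i(0)=\llbracket T\rrbracket$ and $\phi_i(1)=0$. For each sufficiently large $i$ the $\mathbf M$-fineness of $\phi_i$ is small enough that adjacent slices $\phi_i(k/3^{j_i})$ and $\phi_i((k+1)/3^{j_i})$ admit $\mathbf M$-isoperimetric fillings $Q_{i,k}\in\mathbf I_{n+1}(M)$ with
$\partial Q_{i,k}\equiv\phi_i((k+1)/3^{j_i})-\phi_i(k/3^{j_i})\pmod{\mathcal R_n(\partial M)}$
and uniformly small mass. Setting $A_i:=\sum_k Q_{i,k}$ and telescoping,
\[
\partial A_i\ \equiv\ \phi_i(1)-\phi_i(0)\ =\ -\llbracket T\rrbracket\pmod{\mathcal R_n(\partial M)},
\]
so $\mathrm{spt}(\partial A_i)\subset T\cup\partial M$. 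Because $A_i$ is an integral $(n+1)$-current inside the $(n+1)$-manifold $M$ whose boundary is supported in $\partial M\cup T$, the Constancy Theorem forces $A_i=c_i\llbracket M\rrbracket$ componentwise for some $c_i\in\mathbb Z$. Matching $\partial(c_i\llbracket M\rrbracket)\equiv c_i\llbracket T\rrbracket$ with $-\llbracket T\rrbracket$ modulo $\mathcal R_n(\partial M)$ forces $c_i=-1$, so $F(S)=A_i=-\llbracket M\rrbracket$.

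\emph{Uniqueness.} Given two such sequences $S^1, S^2$, concatenate the reversal of $S^2$ with $S^1$ to form a $(1,\mathbf M)$-homotopy sequence $\tilde S$ into $(\mathcal Z_n(M,\partial M),\{0\})$, i.e.\ a loop based at $0$. The telescoping computation applied to $\tilde S$ yields
\[
F(\tilde S)=F(S^1)-F(S^2)=0\in\mathbf I_{n+1}(M),
\]
hence $F(\tilde S)=0$ in $H_{n+1}(M,\partial M)$ as well. Since Almgren's map $F:\pi_1^\sharp(\mathcal Z_n(M,\partial M,\mathbf M),\{0\})\to H_{n+1}(M,\partial M)$ is an isomorphism, $\tilde S$ is null-homotopic in this category. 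Unfolding the null-homotopy (breaking it at the middle level where it passes through $\llbracket T\rrbracket$) produces $(1,\mathbf M)$-homotopies between $\phi^1_i$ and $\phi^2_i$ with fineness tending to $0$, witnessing $S^1\sim S^2$.

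\emph{Main obstacle.} The principal care-point is the first step: one must verify that the $\mathbf M$-isoperimetric choices assemble into a genuine element $A_i\in\mathbf I_{n+1}(M)$, not just a relative class, and that its boundary precisely realizes the telescoping relation modulo $\mathcal R_n(\partial M)$, with uniform mass control inherited from the decaying fineness. Once this bookkeeping is in place, the Constancy Theorem rigidly fixes the coefficient $c_i=-1$, and the uniqueness claim falls out from the base-case Almgren isomorphism.
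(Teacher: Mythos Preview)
Your computation of $F(S)$ is correct and essentially matches the paper: telescope the $\mathbf M$-isoperimetric choices, observe $\mathrm{spt}(\partial A_i+\llbracket T\rrbracket)\subset\partial M$, and apply the Constancy Theorem to force $A_i=-\llbracket M\rrbracket$.

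The uniqueness argument, however, has a gap in the ``unfolding'' step. A null-homotopy of the concatenated loop $\tilde S$ in the based category $\big(\mathcal Z_n(M,\partial M),\{0\}\big)$ fixes the endpoints $0$ and $1$ at $0$, but it does \emph{not} fix the midpoint at $\llbracket T\rrbracket$: the intermediate maps $\psi(t,\tfrac12)$ are free to wander. So simply ``breaking it at the middle level'' does not yield homotopies of $\phi^1_i$ to $\phi^2_i$ in the category $\big(\mathcal Z_n(M,\partial M),\{\llbracket T\rrbracket\},\{0\}\big)$, where the initial value must remain $\llbracket T\rrbracket$ throughout. The concatenation route can be salvaged (via the usual $\gamma_1\simeq\gamma_2*\bar\gamma_2*\gamma_1\simeq\gamma_2$ manoeuvre), but that requires extra bookkeeping with associativity and inverses in the discrete Almgren--Pitts setting.

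The paper sidesteps this entirely by exploiting the abelian-group structure of $\mathcal Z_n(M,\partial M)$: set $\tilde\phi_i:=\phi^1_i-\phi^2_i$ pointwise. This lands directly in $\big(\mathcal Z_n(M,\partial M),\{0\}\big)$ with $F(\tilde S)=0$, and if $\psi_i$ is the resulting null-homotopy, then $\psi_i+\phi^2_i$ is already the desired homotopy from $\phi^1_i$ to $\phi^2_i$ with the correct boundary values $\llbracket T\rrbracket$ and $0$. This linear trick is both shorter and avoids the midpoint issue altogether.
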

\begin{proof}
By Lemma \ref{lemma:M-isoperimetric} and definition of $F$,
\[\mathrm{spt}\left(\partial F(S)+\llbracket T\rrbracket\right)\subset\partial M.\]
Now using the Constancy Theorem \cite{Sim83}*{\S 26.27}, we conclude that $F(S)=-\llbracket M\rrbracket$.

Now let us prove that all such $S$ are equivalent. Let $S=\{\phi_i\}$ and $S=\{\phi_i'\}$ be two $(1,\mathbf M)$-homotopy sequences. Set
\[\widetilde\phi_i:=\phi_i-\phi_i'.\]
Then $\widetilde S=\{\widetilde\phi_i\}$ is a $(1,\mathbf M)$-homotopy sequence mapping into $\left(\mathcal Z_n\left(M,\partial M\right),\{0\}\right)$. Moreover, $F(\widetilde S)=0$. Now by Almgren's Isomorphism, $\widetilde S$ is homotopic to $\{0\}$ in $\left(\mathcal Z_n\left(M,\partial M\right),\{0\}\right)$. Hence, there exists a sequence $\delta_i\rightarrow 0$ such that $\phi_i-\phi_i'$ is homotopic to 0 with fineness $\delta_i$ in $\left(\mathcal Z_n\left(M,\partial M\right),\{0\}\right)$. It follows that $\phi_i$ is 1-homotopic to $\phi_i'$ with $\mathbf M$-fineness $\delta_i$ in $(\mathcal Z_n(M,\partial M),\{\llbracket T\rrbracket\},\{0\})$. This completes our proofs.
\end{proof}

\subsection{Sweepouts}\label{subsec:sweepout}

\begin{definition}
A map
\[\Phi:I\rightarrow\mathcal Z_n(M,\partial M)\]
is called a \emph{regular one-parameter family of $(M,\partial M,T)$} if it satisfies the following:
\begin{enumerate}
  \item $\Phi$ is continuous in flat topology;
  \item $\sup_{x\in I}\M(\Phi(x))<+\infty$;
  \item there is no mass concentration on $\Phi$: $\lim_{r\rightarrow\infty}\m(\Phi,r)=0$ (see \cite{LZ16}*{Definition 4.11} and \cite{MN14}*{\S 4.2} for the definition of $\m(\Phi, r)$).
\end{enumerate}
\end{definition}

Denote $\mathcal P_1(M)$ as the collection of regular one-parameter families.

Assume that $T=\emptyset$. Given $\Phi\in\mathcal P_1$ with $\Phi|_{\{0,1\}}=0$, there is a $(1,\mathbf M)$-homotopy sequence $S_{\Phi}$ mapping into $(\mathcal Z_n(M,\partial M,\mf{M}),\{0\})$ by Discretization Theorem \ref{thm:discrete}. Denote $\Pi_{\Phi}$ be the equivalent class of $S_{\Phi}$ in $\pi^\sharp_1(\mathcal Z_n(M,\partial M,\mf{M}),\{0\})$.

\begin{definition}\label{def:sweepout}
If $T=\emptyset$, a regular one-parameter family
\[\Phi:I\rightarrow\left(\mathcal Z_n(M,\partial M),\{0\}\right)\]
is called a \emph{sweepout of $(M,\partial M)$} if and only if $F(\Pi_{\Phi})$ represents non-zero element in $H_{n+1}(M,\partial M)$.
\end{definition}
\begin{remark}
 By min-max theorem \ref{thm:min-max}, a sweepout can always produce FBMHs with multiplicities, which is called \emph{min-max minimal hypersurface  corresponding to the fundamental class}.
\end{remark}

\begin{definition}[Sweepout of manifolds with portions]\label{def:sweepout with portion}
If $T\neq \emptyset$. A regular one parameter family
\[\Phi:I\rightarrow(\mathcal Z_n(M,\partial M),\{\llbracket T\rrbracket\},\{0\})\]
is called a \emph{sweepout of $(M,\partial M,T)$}.
\end{definition}

\begin{proposition}\label{prop:sweepout:B to NB}
A sweepout of $(M,\partial M,T)$ is also a sweepout of $(M,\partial M\cup T)$.
\end{proposition}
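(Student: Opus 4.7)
The plan is to verify each clause of Definition \ref{def:sweepout} for the target $(M, \partial M \cup T)$. The inclusion $\partial M \hookrightarrow \partial M \cup T$ induces a natural quotient map $\pi \colon \mathcal{Z}_n(M, \partial M) \to \mathcal{Z}_n(M, \partial M \cup T)$, and under $\pi$ the cycle $\llbracket T \rrbracket$ becomes zero since $T$ lies in the new relative boundary. Composing $\Phi$ with $\pi$ therefore produces a map $I \to \mathcal{Z}_n(M, \partial M \cup T)$ with both endpoints equal to $0$, as required.

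I would next check that the regularity conditions of a regular one-parameter family are preserved. The flat distance and the mass on $\mathcal{Z}_n(M, \partial M \cup T)$ are defined as infima over larger equivalence classes than on $\mathcal{Z}_n(M, \partial M)$, and so are bounded above by them; this gives continuity in the flat topology and the uniform mass bound for $\pi \circ \Phi$. The concentration function $\m(\Phi, r)$ is measured on the canonical representative in $M$, which can only lose mass after enlarging the quotient, so the no-concentration property $\lim_{r \to 0} \m(\Phi, r) = 0$ persists.

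The heart of the proof is to show $F(\Pi_\Phi) \neq 0$ in $H_{n+1}(M, \partial M \cup T)$. I would reuse the $\mathbf{M}$-isoperimetric choices from the proof of Lemma \ref{lemma:portion imply sweep}: any chain $Q \in \mathbf{I}_{n+1}(M)$ for which $\partial Q - (\phi_i(x) - \phi_i(y))$ is supported in $\partial M$ automatically has this difference supported in the larger set $\partial M \cup T$, so the old isoperimetric choices remain admissible in the new setting. Summing them along the discrete path yields the same integral current $-\llbracket M \rrbracket \in \mathbf{I}_{n+1}(M)$, whose homology class depends only on the homotopy class of the sequence and not on the specific admissible choices. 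Since $M$ is orientable and $\partial M \cup T$ is its full topological boundary, $-\llbracket M \rrbracket$ represents a generator of $H_{n+1}(M, \partial M \cup T) \cong \mathbb{Z}$, hence is non-zero.

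The only point requiring care is the functoriality statement that the Almgren correspondence $F$ in the enlarged target can be computed using the isoperimetric choices adapted to the smaller one. Once this is in hand, Lemma \ref{lemma:portion imply sweep} transports directly into the $(M, \partial M \cup T)$ framework and delivers the conclusion.
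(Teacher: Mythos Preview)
Your proposal is correct and follows essentially the same route as the paper: both arguments reduce to Lemma~\ref{lemma:portion imply sweep} to identify $F(\{\phi_i\}) = -\llbracket M\rrbracket$, which is then nonzero in $H_{n+1}(M,\partial M\cup T)$. The paper's version is terser---it discretizes $\Phi$ directly via Theorem~\ref{thm:discrete} in the smaller space $\mathcal Z_n(M,\partial M)$, observes that this same sequence serves as a discretization in $\mathcal Z_n(M,\partial M\cup T)$, and then invokes Lemma~\ref{lemma:portion imply sweep}---whereas you spell out explicitly why the regularity conditions (flat continuity, mass bound, no concentration) survive passage to the coarser quotient and why the isoperimetric choices transfer; these are exactly the points the paper leaves implicit.
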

\begin{proof}
Let $\Phi$ be a sweepout of $(M,\partial M,T)$. Let $\{\phi_i\}$ be the $(1,\mathbf M)$-homotopy sequence mapping into $\left(\mathcal Z_n(M,\partial M\cup T),\{0\}\right)$, which is produced by discretizing $\Phi$. Namely, we can take $\{\phi_i\}$ to be a $(1,\mathbf M)$-homotopy sequence mapping into $\left(\mathcal Z_n(M,\partial M),\{\llbracket T\rrbracket\},\{0\}\right)$ by Discretization Theorem \ref{thm:discrete}.

Applying Lemma \ref{lemma:portion imply sweep}, we have $F(\{\phi_i\})=-\llbracket M\rrbracket$. By Definition \ref{def:sweepout}, $\Phi$ is a sweepout of $(M,\partial M\cup T)$.
\end{proof}

\begin{remark}
We claim that there does exist such sweepouts. Let $r$ be the distance function to $T$, which is defined in a small neighborhood of $T$. Perturb it slightly and extend it to whole $M$ such that the extended function $f$ satisfies:
\begin{itemize}
  \item $f$ is a smooth Morse function;
  \item $f^{-1}(0)=T$.
\end{itemize}
It is easy to verify that $\{f^{-1}(t)\}$ is a sweepout.
\end{remark}
Let $\Phi$ be a sweepout of $(M,\partial M,T)$. Then we define 
\[\mathbf L(\Phi):=\sup_{x\in I}\M(\Phi(x)).\]

And \emph{the width of} $(M,\partial M,T)$ is defined as
\[W(M,\partial M,T):=\inf\{\mf{L}\mathbf(\Phi):\Phi \text{ is a sweepout of $(M,\partial M,T)$}\}.\]

\subsection{Min-max Theorem with Barriers}
In this part, we prove that the free boundary min-max theory (Theorem \ref{thm:min-max}) can also produce a FBMH when there is a barrier. This can be viewed as a free boundary version of \citelist{\cite{MN12}*{Theorem 2.1}\cite{Zhou15}*{Theorem 2.7}\cite{Son15}*{Theorem 13}\cite{Wang}*{Theorem 3.4}} using discrete sweepouts.

Let $M$ be a compact manifold with boundary $\partial M$ and portion $\Sigma$. We further assume that $\Sigma$ is a barrier in the sense of Definition \ref{D:manifold with boundary and portion}. Let $r$ be the distance function to $\Sigma$. Apparently, there is a closed manifold $\widetilde M$ and a closed hypersurface $\widetilde\Sigma$ such that $(M,\Sigma)$ can be embedded into $\widetilde M$ satisfying $M\cap\widetilde\Sigma=\Sigma$. Let $\widetilde r$ be the distance function to $\widetilde \Sigma$. Hence, $r=\widetilde r\big|_M$ in a small neighborhood of $\Sigma$ in $M$. Since $\Sigma$ is a barrier, $r$ is well-defined and smooth in a small neighborhood of $\Sigma$. Set
\[M_s=\{x\in M:r(x)>s\}.\]

Now since $\Sigma$ is a barrier, there is a constant $a>0$ such that $r^{-1}(t)$ are barriers for all $t<2a$. By taking $a$ small enough, $r^{-1}(t)$ can be assumed to be isotopic to $\Sigma$ for $t<2a$.

\begin{lemma}\label{lem:push forward}
For any sweepout $\Phi$ of $(M,\partial M,\Sigma)$ and $t_0\in(0,1)$, there exists another sweepout $\Phi'$ satisfying
\begin{enumerate}
  \item $\Phi(0)=\Phi'(0)$;
  \item \label{area non-increasing} $\M(\Phi'(t))\leq\M(\Phi(t))$ for all $t\in[0,1]$;
  \item \label{away} $\mathrm{spt}(\Phi'(t))\subseteq M_\frac{a}{2}$ for all $t>t_0$.
\end{enumerate}
\end{lemma}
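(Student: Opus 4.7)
The plan is to build $\Phi'$ by post-composing each slice of $\Phi$ with the time-$\chi(t)$ map of an ambient isotopy $(F_u)_{u\in[0,1]}$ of $M$ that pushes the thin tubular neighborhood $\{r<a\}$ of $\Sigma$ into $M_{a/2}$, using the barrier property of the level sets $\{r^{-1}(s)\}_{0\leq s<2a}$ to guarantee mass non-increase.

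First, I would construct a smooth vector field $X$ on $\widetilde M$, supported in $\{r<2a\}$, tangent to $\partial M$, and with $\langle X,\nabla r\rangle\geq 0$ throughout and $\langle X,\nabla r\rangle\geq 1$ on $\{r\leq a\}$. Tangency to $\partial M$ is made possible by the acute-angle condition $\langle\nu_{\partial M},\n_{r^{-1}(s)}\rangle<0$ from the definition of a barrier: near $\partial M$ the gradient $\nabla r$ has a nonzero component tangent to $\partial M$, which allows one to replace $\nabla r$ by an interpolation with its tangential projection onto $\partial M$ in a small collar, without losing $\langle X,\nabla r\rangle>0$. Denote by $(F_u)_{u\in[0,1]}$ the flow of $X$; then $F_0=\mathrm{id}$, $F_u$ preserves $M$, $F_u=\mathrm{id}$ outside $\{r<2a\}$, and $F_1(\{r\leq a\})\subset M_{a/2}$.

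The key technical step is to show $\mathbf M((F_u)_\sharp\Phi(t))\leq\mathbf M(\Phi(t))$ for all $u$ and $t$. By the first variation formula, it suffices to prove $\mathrm{div}_V X\leq 0$ pointwise for every integer rectifiable $n$-varifold $V$ supported in $\{r<2a\}$. The mean-convexity of the foliation $\{r^{-1}(s)\}$ --- equivalently $\mathrm{tr}_{Tr^{-1}(s)}\mathrm{Hess}(r)\leq 0$ --- combined with a radial profile for $X$ that is non-increasing in $r$, together with the identity $\mathrm{Hess}(r)(\nabla r,\cdot)=0$ that reduces the trace of $\mathrm{Hess}(r)$ along $T_xV$ to a trace along a sub-plane of $Tr^{-1}(s)$, would then yield the desired pointwise inequality after choosing a sufficiently steep radial cutoff.

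To finish, pick a smooth non-decreasing $\chi:[0,1]\to[0,1]$ with $\chi(0)=0$ and $\chi(t)=1$ for $t\geq t_0$, and set $\Phi'(t):=(F_{\chi(t)})_\sharp\Phi(t)$. Condition (1) is immediate from $\chi(0)=0$; condition (2) follows from the mass non-increase; and condition (3) holds because for $t\geq t_0$, every point in $\mathrm{spt}(\Phi(t))$ either lies in $\{r\leq a\}$ and is mapped into $M_{a/2}$ by $F_1$, or lies in $M_a\subset M_{a/2}$ and is fixed by $F_1$. Continuity in the flat topology and the no-concentration property are preserved under the smooth ambient deformation, and the continuous homotopy $(t,v)\mapsto(F_{v\chi(t)})_\sharp\Phi(t)$ shows $\Phi'$ is in the same relative homotopy class as $\Phi$ in $(\mathcal Z_n(M,\partial M),\{\llbracket\Sigma\rrbracket\},\{0\})$, hence remains a sweepout. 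The main obstacle is the mass-non-increase step: mean-convexity of the foliation controls only the full trace of $\mathrm{Hess}(r)$ along a level set, while $\mathrm{div}_V X$ involves a trace over a general tangent $n$-plane $T_xV$, so the radial cutoff of $X$ must be designed carefully --- in concert with the acute-angle tangency constraint near $\partial M$ --- to absorb the potentially positive contributions coming from sub-traces over planes not aligned with the foliation.
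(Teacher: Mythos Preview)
Your overall plan coincides with the paper's: push each slice by the time-$\chi(t)$ flow of a radial vector field supported in $\{r<2a\}$, and use the first variation formula to get mass non-increase. Two points of comparison are worth making.

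First, the paper resolves the ``main obstacle'' you flag at the end by a specific choice of radial profile, not just a vague ``sufficiently steep cutoff''. It sets $c=\sup\{|A(x)|:r(x)\le 2a\}$, where $A$ is the second fundamental form of the level sets of $r$, and then (following Marques--Neves) takes $\phi$ with $\phi'+c\phi\le 0$ and $\phi(r)=0$ for $r>2a$. For any $n$-plane $P$ one computes
\[
\mathrm{div}_P(\phi\nabla r)=\phi'(r)\sum_i\langle\nabla r,e_i\rangle^2+\phi(r)\sum_i\mathrm{Hess}(r)(e_i,e_i),
\]
and since $|\mathrm{Hess}(r)|\le c$ on the tangent space of each level set (and $\mathrm{Hess}(r)(\nabla r,\cdot)=0$), the condition $\phi'+c\phi\le 0$ forces the right-hand side to be $\le 0$. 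This is exactly the mechanism that absorbs the ``potentially positive contributions from sub-traces'' you mention; mean-convexity alone is not used and would indeed be insufficient.

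Second, and more substantively, the paper does \emph{not} make the vector field tangent to $\partial M$. It flows by $\phi\nabla\widetilde r$ in the closed ambient $\widetilde M$ and then sets $\Phi'(t)=\bigl((G_{h(t)})_\sharp\Phi(t)\bigr)\llcorner M$; the restriction only drops mass further, and the acute-angle barrier condition is what guarantees the result stays in the right relative cycle space. Your proposal instead modifies $X$ near $\partial M$ by interpolating with its tangential projection. That interpolation introduces derivatives of $\nu_{\partial M}$---i.e.\ the second fundamental form of $\partial M$---into $\mathrm{div}_V X$, and these are not controlled by $c$ or by the barrier hypothesis. So the pointwise inequality $\mathrm{div}_V X\le 0$ may fail in the collar where you bend $X$ tangent. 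This is a genuine gap in your argument as written; the paper's ``flow in $\widetilde M$, then restrict to $M$'' trick sidesteps it entirely and is the cleaner route.

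A minor point: in your verification of (3), points with $a<r<2a$ are not fixed by $F_1$; but since $\langle X,\nabla r\rangle\ge 0$ the flow is non-decreasing in $r$, so they remain in $M_a\subset M_{a/2}$ anyway.
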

\begin{proof}
Let $A$ be the second fundamental form of the level set of $r$. Set
\[ c=\sup\{|A(x)|:r(x)\leq 2a\}<+\infty.\]

Let $\phi$ be some cut-off function satisfying
\begin{itemize}
  \item $\phi'+c\phi\leq 0$;
  \item $\phi(r)=0$, for all $r>2a$;
\end{itemize}
The existence of such $\phi$ is shown in \cite{MN12}*{Lemma 2.2}.

Denote $(G_t)_{0\leq t\leq1}$ as the one-parameter family of homomorphisms of $\widetilde M$ generated by $\phi\nabla\widetilde r$. Let $L$ be the canonical representative of  $\tau\in\Z_n(M,\partial M)$. By directly computation (see \cite{Wang} for details),
\[ \mathrm{div}_{(G_t)_\sharp L}(\phi\nabla r)\leq 0.\]

Notice that $(G_s)_\sharp(L)$ is an integer rectifiable $n$-current in $\widetilde M$. By the first variation formula,
\[\frac{d}{ds}\M\left(\left(G_s\right)_\sharp(L)\right)=\int_{G_{s\sharp} L}\mathrm{div}_{\left(G_s\right)_\sharp(L)}\left(\phi\nabla r\right) \leq 0.\]

This implies that for $t\in(0,a)$
\begin{equation}\label{area decrea}
\M\left(\left(G_{t\sharp} L\right)\llcorner M\right)\leq \M\left(G_{t\sharp} L\right)\leq\M\left(L\right),
\end{equation}
for each $L\in Z_n(M,\partial M)$. Notice that $r^{-1}(t)$ are all barriers for $t<2a$ and $\phi$ is supported on $[0,2a]$. Recalling the choice of $\phi$, $G_s(p)=p$ for $p\in\partial M\cap M_a$, and $(G_{s\sharp})L$ is an element in $Z_n(\widetilde M,\partial M)$.

Now for any $\kappa\in \mathcal Z_n(M,\partial M)$ with canonical representative $K$, define $G_{s\sharp}\kappa$ to be the equivalent class of $G_{s\sharp}K$.

Let $S>0$ be such that $G_S(\Sigma)\cap M=r^{-1}(\frac{a}{2})$ and then choose a smooth non-negative function $h:[0,1]\rightarrow [0,S]$ such that $h(0)=0$, $h(t)>0$ for $t>0$ and $h(t)=S$ for $t\geq t_0$. Set
\begin{equation*}
\Phi'(t)=\big((G_{h(t)})_\sharp \Phi(t)\big)\llcorner M.
\end{equation*}
Then if $t=0$, $\Phi'(t)=\Phi(t)$; if $t>0$, it follows from (\ref{area decrea}) that
\[ \M((G_{h(t)})_\sharp(\Phi(t)))\leq \M(\Phi(t)).\]

For the last requirement, noticing that $h(t)=S$ for $t\geq t_0$ and the $G_S(\Sigma)\cap M=r^{-1}(\frac{a}{2})$, we conclude that $\mathrm{spt}\Phi'(t)\subseteq M_{\frac{a}{2}}$.

To complete the proof, it suffices to show that $\Phi'$ is a sweepout of $(M,\partial M,\Sigma)$. This follows from Definition \ref{def:sweepout with portion}.
\end{proof}

\begin{theorem}\label{new minimal surface}
Let $M$ be a Riemannian manifold with boundary $\partial M$ and portion $\Sigma$. If $W(M,\partial M,\Sigma)>\area(\Sigma)$, there exists a min-max sequence $\{\Phi_{i_k}^k(t_k)\}$ converging in the varifold sense to an integral varifold which is supported on an embedded free  boundary (possibly empty) minimal hypersurface $\Gamma$ (possibly disconnected), which satisfies $\Gamma\cap\Sigma=\emptyset$. Furthermore, the width satisfies
\begin{equation*}
W(M,\partial M,\Sigma)=\area(\Gamma),
\end{equation*}
if counted with multiplicities.
\end{theorem}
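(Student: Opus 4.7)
The plan is to run the Almgren--Pitts min-max scheme on sweepouts of $(M,\partial M,\Sigma)$ (viewing these as sweepouts of $(M,\partial M\cup\Sigma)$ via Proposition \ref{prop:sweepout:B to NB}, so that Theorem \ref{thm:min-max} applies), and then exploit Lemma \ref{lem:push forward} to confine the min-max sequence to $\overline{M_{a/2}}$, which is disjoint from $\Sigma$. Set $W:=W(M,\partial M,\Sigma)$ and fix $\epsilon>0$ with $W>\area(\Sigma)+2\epsilon$, which is possible by hypothesis.

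First I would choose a minimizing sequence of sweepouts $\{\Phi_i\}$ of $(M,\partial M,\Sigma)$ with $\mathbf{L}(\Phi_i)\to W$. For each $i$, flat continuity of $\Phi_i$ together with the initial condition $\Phi_i(0)=\llbracket\Sigma\rrbracket$ gives a small $t_{0,i}>0$ (which may be taken to tend to $0$) such that $\M(\Phi_i(t))<\area(\Sigma)+\epsilon$ for all $t\leq t_{0,i}$. Invoking Lemma \ref{lem:push forward} with this $t_{0,i}$ produces a modified sweepout $\Phi_i'$ satisfying $\mathbf{L}(\Phi_i')\leq\mathbf{L}(\Phi_i)$ and $\mathrm{spt}(\Phi_i'(t))\subset\overline{M_{a/2}}$ for every $t>t_{0,i}$, while the small-mass bound on $[0,t_{0,i}]$ is retained since the push-off does not increase mass. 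Applying the Discretization Theorem \ref{thm:discrete} together with a diagonal argument then yields a $(1,\mathbf{M})$-homotopy sequence $S=\{\phi_j\}$ into $(\mathcal Z_n(M,\partial M),\{\llbracket\Sigma\rrbracket\},\{0\})$ with $\mathbf{L}(S)=W$; by Lemma \ref{lemma:portion imply sweep} we have $F(S)=-\llbracket M\rrbracket$, so $S$ represents a non-trivial Almgren class for sweepouts of $(M,\partial M\cup\Sigma)$.

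Theorem \ref{thm:min-max} now produces a subsequence $\phi_{j_k}(t_k)$ converging as varifolds to an integral varifold $V=\sum_i n_i[\Gamma_i]$ with $\|V\|(M)=W$, where each $\Gamma_i$ is an almost properly embedded free boundary minimal hypersurface in $(M,\partial M\cup\Sigma)$. The disjointness $\Gamma\cap\Sigma=\emptyset$ then follows by a pigeonhole argument: since $\|V\|(M)=W>\area(\Sigma)+2\epsilon$, one has $\M(\phi_{j_k}(t_k))>\area(\Sigma)+\epsilon$ for $k$ large, and the Step~1 estimate forces $t_k>t_{0,j_k}$, so $\mathrm{spt}(\phi_{j_k}(t_k))\subset\overline{M_{a/2}}$. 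Passing to the varifold limit, $\mathrm{spt}(V)\subset\overline{M_{a/2}}$ is disjoint from $\Sigma$, so $\Gamma=\bigcup_i\Gamma_i$ is in fact a free boundary minimal hypersurface in the original $(M,\partial M)$, disjoint from $\Sigma$, with $W(M,\partial M,\Sigma)=\area(\Gamma)$ counted with multiplicities.

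The hard part will be the technical compatibility between the Discretization Theorem and the push-off property of Step~1: discretization perturbs slices in flat norm, and one has to verify that these perturbations can be chosen to keep cycles with mass close to $W$ inside $\overline{M_{a/2}}$. A similar subtlety arises for the standard pull-tight deformation underlying Theorem \ref{thm:min-max}, which must be adapted to sweepouts with non-zero boundary value $\llbracket\Sigma\rrbracket$ at $t=0$. Both points are manageable because the relevant deformations can be taken to fix a neighborhood of $\Sigma$ in $M$: on its mean-convex side, the inward pointing gradient of the distance function $r$ provides a variation that does not increase mass (as in Lemma \ref{lem:push forward}), so the barrier $\Sigma$ is never crossed at the high-mass slices that ultimately feed into the min-max limit.
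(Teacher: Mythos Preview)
Your proposal is correct and follows essentially the same approach as the paper: use Lemma~\ref{lem:push forward} to confine high-mass slices of a minimizing sequence to $\overline{M_{a/2}}$, discretize, run the Li--Zhou min-max machine (viewing the sweepouts as sweepouts of $(M,\partial M\cup\Sigma)$), arrange the tightening to fix a neighborhood of $\Sigma$, and conclude that the almost-minimizing min-max sequence---hence its varifold limit---stays away from $\Sigma$. You have also correctly identified the two technical friction points (discretization perturbs supports, and pull-tight must respect the barrier) and the reason they are harmless.

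One small correction: flat continuity of $\Phi_i$ at $t=0$ does \emph{not} by itself give $\M(\Phi_i(t))<\area(\Sigma)+\epsilon$ for small $t$, since mass is only lower semi-continuous in the flat topology. The paper asserts the same estimate without proof; the clean fix is to first replace the initial segment of each sweepout by the foliation $\{r^{-1}(s)\}$ near $\Sigma$ (which has mass converging to $\area(\Sigma)$ and does not change the width, by Lemma~\ref{lemma:portion imply sweep}), after which the estimate is immediate.
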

\begin{proof}
It suffices to show that there exists a minimizing sequence $\{\Phi^k(t)\}_{k=1}^\infty$ of sweepouts of $(M,\partial M,\Sigma)$ such that
\begin{equation}\label{pull}
\M(\Phi^k(t))\geq W(M,\partial M,\Sigma)-\delta\Rightarrow\mathrm{dist}\left(\mathrm{spt}\left(\Phi^k(t)\right),\Sigma\right)\geq\frac{a}{2},
\end{equation}
where $\delta=\frac{1}{4}\left(W(M,\partial M,\Sigma,\Lambda)-\area(\Sigma)\right)>0$, and $\mathrm{dist}(\cdot,\cdot)$ is the distance function of $(M,\partial M,g)$.

For any sweepout $\{\Psi^k(t)\}$, there always exists $\epsilon_k>0$ such that
\begin{equation}\label{local depart}
\M(\Psi^k(t))\leq\area(\Sigma)+\delta\ \ \text{ for all } t\in[0,2\epsilon_k].
\end{equation}
Then by taking $t_0=\epsilon_k$ in Lemma \ref{lem:push forward}, there is a better sweepout $\{\widetilde\Psi^k(t)\}$, which satisfies (\ref{pull}). In fact,
\begin{equation*}
\M(\widetilde\Psi^k(t))\geq W(M,\partial M,\Sigma)-\delta
\end{equation*}
implies that
\begin{equation*}
\M(\widetilde\Psi^k(t))\geq W(M,\partial M,\Sigma)-\delta\geq \area(\Sigma)+\delta.
\end{equation*}
Then by (\ref{local depart}) and Lemma \ref{lem:push forward} (\ref{area non-increasing}), we have $t\geq 2\epsilon_k$. Now using Lemma \ref{lem:push forward} (\ref{away}), we conclude that
\begin{equation*}
\mathrm{dist}(\mathrm{spt}\widetilde\Psi^k(t),\Sigma)\geq\frac{a}{2}.
\end{equation*}

Now modifying the arguments of min-max theory for compact manifolds with boundary in \cite{LZ16}, we can obtain a FBMH $(\Gamma,\partial\Gamma)$ with $\partial\Gamma\subseteq\partial M$. Let us sketch the main steps here.

Let $\{\{\widetilde\Psi^k(t)\}_{t\in[0,1]}\}_{k=1}^\infty$ be the minimizing sequence satisfying (\ref{pull}). By Discretization Theorem \ref{thm:discrete}, for each $\widetilde\Psi^k$, there exists a sequence of mappings
\[\psi^k_i:I(1,j^k_i)_0\rightarrow\left(\mathcal Z_n(M,\partial M),\{\llbracket\Sigma\rrbracket\},\{0\}\right)\]
with $j^k_i<j^k_{i+1}$ and a sequence of positive numbers $\delta^k_i$ satisfying requirements in Theorem \ref{thm:discrete}. Applying Interpolation Theorem \ref{thm:interpolation}, there exists a sequence of continuous maps:
\[\overline\Psi_i^k:I\rightarrow \mathcal Z_n(M,\partial M;\mathbf M)\]
such that $\overline\Psi_i^k(x)=\psi_i^k(x)$ for all $x\in I(1,j^k_i)_0$. Recalling that $\psi_i^k(0)=\llbracket\Sigma\rrbracket$, we conclude that each $\overline\Psi_i^k$ is a sweepout of $(M,\partial M,\Sigma)$.

Next we follow the tightening process in \cite{LZ16}*{Proposition 4.17}, where each sequence of mappings can be deformed to another  $\{\Phi^k_i(t)\}$ such that each min-max sequence converges to a stationary varifold. Since those $\psi_i^k(t)$ with mass closing to $W(M,\partial M,\Sigma)$ have a distance $a/2>0$ away from $\Sigma$, we can fix the mappings near $\Sigma$ in the tightening process. Therefore $\Phi^k_i$ is also a sweepout of $(M,\partial M,\Sigma)$.

Now for an almost minimizing min-max sequence $\{\Phi^k_{i_k}(t_k)\}$, it follows that $\mathrm{spt} \left(\Phi^k_{i_k}(t_k)\right)$ always have a distance $a/2$ away from $\Sigma$ for large $k$ by (\ref{pull}).

Finally, we show that the limit of the almost minimizing min-max sequence is supported on some embedded FBMHs. These were done by Li-Zhou \cite{LZ16}*{\S 10.3, \S 5} and there are no differences here. Hence, there is a FBMH $(\Gamma,\partial\Gamma)$  with $\partial\Gamma\subseteq\partial M$. Since the supports of the minimizing sequence have fixed distance away from $\Sigma$, we conclude that $\Gamma\cap\Sigma=\emptyset$.
\end{proof}

\section{Free boundary Minimal hypersurfaces with area less than \texorpdfstring{$\mc{A}_{\mathcal S}(M,\partial M)$}{ASM}}\label{sec:unstable}
In this section, we study FBMHs whose areas are less than $\mc{A}_{\mathcal S}(M,\partial M)$.

Let $(\Sigma,\partial\Sigma)$ be an almost properly embedded hypersurface in $(M,\partial M)$. Take a cut-off function $\phi$ which is supported in a neighborhood of the touching set of $\Sigma$ such that $\langle \phi\n,\nu_{\partial M}\rangle <0$ on touching set, where $\n$ is the normal vector field of $\Sigma$. Set 
\[\Sigma_{t\phi}:=\{\mathrm{exp}_x(t\phi\n):x\in\Sigma\}.\]
This is well-defined for $t>0$ small enough by the definition of $\phi$. Then $\Sigma_{t\phi}$ is called a \emph{generic type of} $\Sigma$ (see Figure V). Obviously, all the generic types of $\Sigma$ are isotopic to each other. Moreover, they are properly embedded hypersurfaces.

$\Sigma$ is said to \emph{generically separate} $M$ if there is a generic type of $\Sigma$ separating $M$.

\begin{remark}
We emphasis that the generic type is also well-defined for non-orientable hypersurfaces. In fact, all we need is a well-defined locally normal vector field. Notice that  a neighborhood of touching set can be seen as a graph over $\partial M$; hence there is a neighborhood of the touching set in $\Sigma$ which is orientable.
\end{remark}

\begin{figure}[h]
\begin{center}
\def\svgwidth{0.7\columnwidth}
  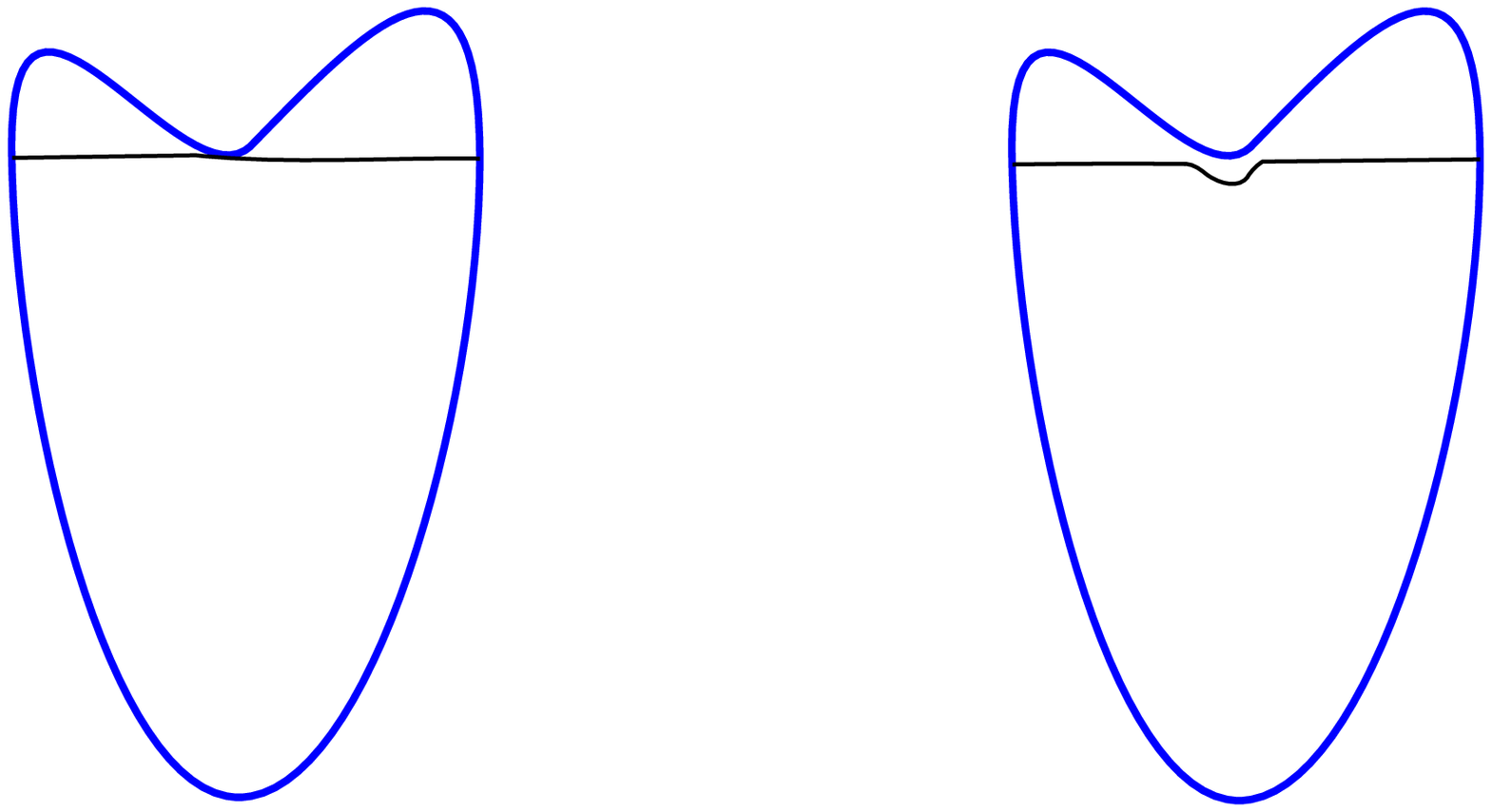
  \label{graph:generic}
  \caption{}
\end{center}
\end{figure}

\subsection{Construction of Stable Minimal Hypersurfaces}
\begin{proposition}[cf. \cite{MaRo17}*{Proposition 14}]\label{prop:non-orient:less}
Suppose that $\Sigma$ is a non-orientable almost properly embedded free boundary minimal hypersurface in $M$. Then there is a connected, properly embedded, stable free boundary minimal hypersurface $\Sigma'$ in $M$ such that $\area(\Sigma')\leq \area(\Sigma)$.
\end{proposition}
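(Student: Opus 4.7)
The strategy is to use the non-orientability of $\Sigma$ to construct a connected two-sided competitor of area close to $2\area(\Sigma)$, then minimize area and use the structure of the minimizer to extract a connected stable $\Sigma'$ of area at most $\area(\Sigma)$. Since $M$ is orientable and $\Sigma$ is non-orientable, the normal line bundle of $\Sigma$ in $M$ is non-trivial, so the interior boundary $T_\epsilon := \partial U_\epsilon \cap \mathrm{int}(M)$ of a thin tubular neighborhood $U_\epsilon$ of $\Sigma$ is a connected, two-sided hypersurface which double-covers $\Sigma$, with $\area(T_\epsilon)\to 2\area(\Sigma)$ as $\epsilon\to 0$. Near the touching set of $\Sigma$ with $\partial M$ and near $\partial\Sigma$, the generic-type construction $\Sigma_{t\phi}$ introduced at the start of this section, combined with a small normal pushoff, allows $T_\epsilon$ to be realized as a properly embedded hypersurface in $M$ with free boundary on $\partial M$.

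Next, I would set up a constrained area minimization: among all Caccioppoli sets $V\subset M$ with $V\supset U_\epsilon$, minimize $\area(\partial^{*}V\cap \mathrm{int}(M))$ subject to the free boundary condition on $\partial M$. Standard GMT yields a minimizer $V^{*}$ whose boundary $S_\epsilon:=\partial V^{*}\cap \mathrm{int}(M)$ is, by the regularity theory for stable free boundary minimal hypersurfaces in dimensions $2\leq n\leq 6$, a properly embedded, two-sided, stable FBMH with $\area(S_\epsilon)\leq \area(T_\epsilon)$. Applying the compactness theorem (Theorem \ref{ques:stable:compt:multi}) as $\epsilon\to 0$, a subsequence $\{S_{\epsilon_k}\}$ converges smoothly with some multiplicity $m\in\{1,2\}$ to a stable FBMH $S_0\subset M$ with $\area(S_0)\leq 2\area(\Sigma)$ counted without multiplicity in the limit.

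Two favorable scenarios immediately yield $\Sigma'$. First, if $m=2$, then by Theorem \ref{ques:stable:compt:multi} $S_0$ is necessarily one-sided and $\area(S_0)\leq \area(\Sigma)$, so $\Sigma':=S_0$ (or any connected component) works; its stability is inherited from the stability of its two-sided cover by restricting the Jacobi quadratic form to $\mathbb{Z}/2$-invariant sections. Second, if $S_0$ is disconnected, at least one component has area $\leq \area(\Sigma)$ since the total is bounded by $2\area(\Sigma)$. The main hurdle is to rule out the third case where $S_0$ is connected, two-sided, and of area strictly greater than $\area(\Sigma)$: here, following \cite{MaRo17}*{Proposition 14}, I would argue via the outermost nature of the minimizers that $S_{\epsilon_k}$ stays within a shrinking neighborhood of $\Sigma$ (otherwise the minimality of $V^{*}$ would be violated by comparison with a thicker tubular set), forcing $\mathrm{spt}(S_0)\subset \Sigma$; combined with the two-sidedness of $S_{\epsilon_k}$ and the one-sidedness of $\Sigma$, Theorem \ref{ques:stable:compt:multi} then forces $m=2$, returning us to the first scenario.

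The principal technical obstacle beyond the case analysis is ensuring that $\Sigma'$ is \emph{properly} embedded rather than merely almost properly embedded, since smooth limits of properly embedded FBMHs can a priori develop a non-empty touching set in $M$. When $\Sigma'$ is strictly stable ($\lambda_1(\Sigma')>0$), the barrier construction of Proposition \ref{prop:barrier:stable} applied from the complementary side provides a small perturbation to a properly embedded stable FBMH preserving the area inequality up to $o(1)$; when $\lambda_1(\Sigma')=0$, a bumpy-metric approximation via Theorem \ref{thm:bumpy}, combined with the lower semi-continuity of $\mathcal{A}_{\mathcal{S}}$ (Corollary \ref{cor:lower}), reduces to the strictly stable case by first passing to a generic $C^k$ perturbation of $g$ and then letting the perturbation vanish.
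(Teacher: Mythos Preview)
Your approach diverges substantially from the paper's, which is far more direct. The paper observes that since $\Sigma$ is non-orientable inside the orientable $M$, a generic type of $\Sigma$ represents a \emph{nonzero} class in $H_n(M,\partial M;\mathbb{Z}_2)$; one then simply minimizes mass among relative $\mathbb{Z}_2$-cycles in that homology class. Regularity theory (Morgan for the interior, Gr\"uter for the boundary) yields a smooth, properly embedded, stable FBMH $\Sigma'$ with $\area(\Sigma')$ at most the area of the generic type, and letting the generic perturbation shrink gives $\area(\Sigma')\leq\area(\Sigma)$. No double cover, no $\epsilon\to 0$ compactness limit, no case analysis, and proper embeddedness is automatic from the regularity of the minimizer.

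Your route has a genuine gap at the minimization step. As written, you minimize $\area(\partial^{*}V\cap\mathrm{int}(M))$ over Caccioppoli sets $V$ with $U_\epsilon\subset V\subset M$; but $V=M$ is admissible and has $\partial^{*}V\cap\mathrm{int}(M)=\emptyset$, so the infimum is zero and no $S_\epsilon$ is produced. If you try to repair this homologically, note that $T_\epsilon=\partial U_\epsilon\cap\mathrm{int}(M)$ bounds $U_\epsilon$ and is therefore trivial in $H_n(M,\partial M;\mathbb{Z})$, so a $\mathbb{Z}$-homology constraint on the two-sided boundary is again vacuous. If instead you retain the obstacle constraint $V\supset U_\epsilon$ together with some nontriviality condition, the minimizer may coincide with the obstacle $T_\epsilon$ on a set of positive measure---$T_\epsilon$ need not be mean-convex when $\Sigma$ is merely minimal---and then $S_\epsilon$ fails to be a FBMH there. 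The subsequent compactness argument, the three-case analysis, and the proper-embedding discussion are thus built on an $S_\epsilon$ that has not been produced. The idea you are missing is exactly the nontrivial $\mathbb{Z}_2$-homology class carried by the non-orientable $\Sigma$ itself: it makes the minimization nontrivial with a competitor of area close to $\area(\Sigma)$, not $2\area(\Sigma)$, and delivers the conclusion in one stroke.
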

\begin{proof}
Since $\Sigma$ is non-orientable, each generic type of $\Sigma$ is also non-orientable and represents a non-zero element in $H_n(M,\partial M;\mb{Z}_2)$. We can then minimize the mass norm among all the relative chains that are homologous to $[\Sigma]$. Using the regularity theory (see \cite{Mor86} for the interior regularity and \cite{Gru87} for boundary regularity), we obtain a smooth properly embedded FBMH $\Sigma'$ in $M$ which minimizes the area in the homology class represented by generic types of $\Sigma$. Since $\Sigma'$ is a minimizer, it must be stable and $\area(\Sigma')$ is less than or equal to the area of each generic type of $\Sigma$. As a generic type of $\Sigma$ can be produced by perturbing $\Sigma$ very slightly, it follows that $\area(\Sigma')\leq \area(\Sigma)$.
\end{proof}


\begin{proposition}\label{minimizer}
Let $(M, \partial M, T)$ be a compact manifold with smooth boundary $\partial M$ and portion $T$ such that $T$ is a barrier. Let $\Sigma$ be a properly embedded orientable hypersurface in $M$ with $\partial\Sigma\subset \partial M$. If $[\Sigma]$ represents a nontrivial element in $H_n(M,\partial M;\mathbb Z)$, then there exists a properly embedded, stable, orientable, free boundary minimal hypersurface $S$ with boundary in $\partial M$ such that $\area(S)\leq\area(\Sigma)$. Moreover, the equality holds only if $\Sigma$ is a stable free boundary minimal hypersurface.
\end{proposition}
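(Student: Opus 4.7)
The plan is to minimize mass in the integer relative homology class $[\Si]\in H_n(M,\partial M;\mb Z)$ and to exploit the barrier property of $T$ in order to confine the resulting minimizer to $M$. I would first consider the class of integer rectifiable $n$-currents $K$ in $M$ with $\mathrm{spt}(\partial K)\subset\partial M$ and $[K]=[\Si]$, and set $\alpha=\inf\{\mf M(K)\}$ over this class. Since $[\Si]\neq 0$ one has $\alpha>0$, while $\alpha\leq\area(\Si)<\infty$; Federer--Fleming compactness together with lower semicontinuity of mass yields a minimizer $S$. Morgan's interior regularity \cite{Mor86} (valid in dimension $n\leq 6$) gives that $\mathrm{spt}(S)\cap\mathrm{int}(M)$ is a smooth embedded hypersurface, and Grüter--Jost's boundary regularity \cite{Gru87} extends smoothness up to $\partial M$ with orthogonal intersection, yielding the free boundary condition on $\partial S\subset\partial M$. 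Orientability of $S$ is inherited from the integer current structure, and $S$ is stable because it minimizes area in its homology class.

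The main obstacle is to rule out that $\mathrm{spt}(S)$ reaches the barrier $T$. Suppose for contradiction that $\mathrm{spt}(S)\cap T\neq\emptyset$ at some point $p$. If $p$ lies in the interior of $T$, the strict mean convexity of $T$, together with the fact that $\mathrm{spt}(S)$ lies on the $M$-side of $T$ and has zero generalized mean curvature, violates the strong maximum principle for stationary integral varifolds against mean-convex barriers. If instead $p\in T\cap\partial M$, the strict angle condition $\langle\nu_{\partial M},\n_T\rangle<0$ of the barrier allows one to construct a compactly supported one-parameter deformation of $S$ near $p$, analogous to the pushoff built in Proposition \ref{prop:barrier:unstable}, that strictly decreases mass, again contradicting minimality. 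Consequently, $\mathrm{spt}(S)$ is disjoint from $T$, and $S$ is a properly embedded, orientable, stable FBMH in $(M,\partial M)$ with $\area(S)\leq\area(\Si)$.

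For the equality case, if $\area(S)=\area(\Si)$ then $\Si$ itself is a mass minimizer in $[\Si]$. Its vanishing first variation under all smooth variations supported in $M$ with boundary allowed to move freely along $\partial M$ forces $\Si$ to have identically zero mean curvature and to meet $\partial M$ orthogonally along $\partial\Si$, i.e., $\Si$ is a free boundary minimal hypersurface. The non-negativity of the corresponding second variation then implies that $\Si$ is stable, completing the proof.
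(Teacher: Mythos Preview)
Your overall strategy---minimize mass in the relative homology class $[\Sigma]$, invoke compactness and regularity, and use the barrier to keep the minimizer off $T$---is the same as the paper's. The one substantive difference is \emph{where} you invoke the barrier property. The paper uses it \emph{before} passing to the limit: since $T$ is a barrier, one can flow each member $\tau_i$ of the minimizing sequence a fixed distance into $M$ without increasing mass (this is exactly the area-nonincreasing flow $G_t$ built in Lemma~\ref{lem:push forward}), so the limit is automatically supported away from $T$ and no maximum-principle argument is needed. You instead take the limit first and then try to exclude contact with $T$ via a strong maximum principle at interior points of $T$ and an ad hoc deformation at corner points $p\in T\cap\partial M$. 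The interior step is fine, but your corner step is underspecified: the reference to Proposition~\ref{prop:barrier:unstable} is not quite apt (that proposition perturbs an \emph{unstable} FBMH to build a barrier, not a minimizer off a barrier), and you would need to check carefully that your local deformation keeps $\partial S$ on $\partial M$ and preserves the relative homology class. The paper's ``push the sequence first'' maneuver sidesteps all of this. Your equality-case discussion is correct and slightly more explicit than the paper's.
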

\begin{proof}
In terms of geometric measure theory, $\Sigma$ can be seen as a relative integral $n$-cycle. Suppose that $\{\tau_i\}$ is a sequence of relative cycles such that
\begin{itemize}
  \item each $\tau_i$ is homologous to $[\Sigma]$ in $H_n(M,\partial M)$;
  \item $\lim_{i\rightarrow\infty}\M(\tau_i)=\inf \{\M(\kappa):\kappa\in \mathcal Z_n(M,\partial M),\ \kappa \text{ is homologous to } [\Sigma]\}$
\end{itemize}
Since $T$ is a barrier, each $\tau_i$ can be assumed to be away from $T$ with fixed distance. By the compactness theorem for relative cycles \cite{LZ16}*{Lemma 3.10} (see also \cite{LMN16}*{\S 2.3}), $\tau_i$ converges to $\tau\in\mathcal Z_n(M,\partial M)$ in flat topology. Moreover, $\tau_i$ is homologous to $[\Sigma]$, which can be seen directly from \cite{LZ16}*{Lemma 3.15} (see also \cite{Mor16}*{\S 12.3}). Since it is a minimizer, by the regularity theory \cite{Gru87}, $\tau$ is supported on a properly embedded, stable FBMH which may not be connected. Then the proposition follows by taking any component.
\end{proof}

\subsection{Construction of Sweepouts}
\begin{proposition}\label{prop:sweepout}\label{prop:less_area:sweepout}
Let $(\Sigma,\partial\Sigma)\subset(M,\partial M)$ be an orientable, almost properly embedded, free boundary minimal hypersurface with $\area(\Sigma)<\mc{A}_{\mathcal S}(M,\partial M)$. Then there is a sweepout
\[\Psi:I\longrightarrow \mathcal Z_n(M,\partial M),\]
such that
\begin{enumerate}
  \item $\Psi(\frac{1}{2})=\Sigma;$
  \item $F(\Psi)=\llbracket M\rrbracket$ (see  \S \ref{subsec:sweepout});
  \item $\M(\Psi(t))<\area(\Sigma)$ for $t\neq 0$.
\end{enumerate}
\end{proposition}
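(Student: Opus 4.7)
Since $\Sigma$ is orientable with $\area(\Sigma)<\mc{A}_{\mathcal S}(M,\partial M)$, $\Sigma$ cannot be stable (a stable orientable FBMH has area at least $\mc{A}_{\mathcal S}$), so $\lambda_1(\Sigma)<0$. The positive eigenfunction $f$ of (\ref{equ:barrier function}) therefore exists, and Proposition \ref{prop:barrier:unstable} applied once with $X|_\Sigma=-f\n$ and once with $X|_\Sigma=+f\n$ produces a smooth two-sided deformation $\{\Sigma_s\}_{s\in[-\tau,\tau]}$ with $\Sigma_0=\Sigma$, slices pairwise disjoint for $s\neq 0$, each $\Sigma_s$ $(s\neq 0)$ properly embedded, $|s|\mapsto\area(\Sigma_s)$ strictly decreasing, and $\Sigma_{\pm\tau}$ acting as barriers for the compact complementary regions $M^\pm$ obtained by removing the open swept tubular piece from $M$. (In the non-separating case $M\setminus\cup_s\Sigma_s$ has a single complementary component with two barrier boundary components; the pasting that follows is adapted accordingly.)

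The plan is to define $\Psi$ by pasting three sweepouts. On a central sub-interval $[\tfrac12-\delta,\tfrac12+\delta]$ I let $\Psi(t)$ run through the slices $\{\Sigma_s\}$ so that $\Psi(\tfrac12)=\Sigma$ and $\M(\Psi(t))<\area(\Sigma)$ for $t\neq \tfrac12$ in this range. On the two outer intervals $[0,\tfrac12-\delta]$ and $[\tfrac12+\delta,1]$ I paste in sweepouts $\Psi^\pm$ of the manifold-with-barrier $(M^\pm,\partial M^\pm,\Sigma_{\pm\tau})$ in the sense of Definition \ref{def:sweepout with portion}, starting at the zero cycle and terminating at $\llbracket\Sigma_{\pm\tau}\rrbracket$. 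Granting the key claim that such $\Psi^\pm$ can be chosen with $\max_t\M(\Psi^\pm(t))<\area(\Sigma)$, condition (1) is built in by construction, (2) follows from Lemma \ref{lemma:portion imply sweep} together with the additivity of Almgren's map across the gluings, and (3) is immediate from the pointwise mass bounds on each piece.

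The hard part is establishing the strict width bound $W(M^\pm,\partial M^\pm,\Sigma_{\pm\tau})<\area(\Sigma)$. I would argue by contradiction: suppose $W(M^+,\partial M^+,\Sigma_\tau)\geq\area(\Sigma)>\area(\Sigma_\tau)$. Theorem \ref{new minimal surface} then yields a new almost properly embedded FBMH $\Gamma=\sum n_i[\Sigma_i]\subset M^+$ disjoint from $\Sigma_\tau$, with $\sum n_i\area(\Sigma_i)$ equal to the width. Each component $\Sigma_i$ is a FBMH in $M$ lying strictly on one side of $\Sigma$ and hence distinct from $\Sigma$. A case analysis drives the contradiction: if some $\Sigma_i$ is non-orientable, Proposition \ref{prop:non-orient:less} produces a stable orientable FBMH of area at most $\area(\Sigma_i)$, and iterating inside the subregion bounded by this FBMH and $\Sigma_\tau$ eventually exhibits a stable orientable FBMH of area strictly below $\mc{A}_{\mathcal S}$, contradicting the definition of $\mc{A}_{\mathcal S}$; if some $\Sigma_i$ is orientable and stable then $\area(\Sigma_i)\geq\mc{A}_{\mathcal S}$, but after choosing $\Sigma$ to be an orientable FBMH of maximal area strictly below $\mc{A}_{\mathcal S}$ (such a maximizer exists after a generic bumpy perturbation, since by the theorem in \S\ref{thm:bumpy} together with Theorem \ref{ques:stable:compt:multi} the set of FBMHs with area below any fixed bound is finite) this bound is incompatible with the explicit upper sweepout $\{\Sigma_s\}\cup\Psi^\pm$; if every $\Sigma_i$ is orientable and unstable, Proposition \ref{prop:barrier:unstable} applied to the $\Sigma_i$ nearest $\Sigma_\tau$ strictly shrinks $M^+$ to a proper subregion on which the same trichotomy is reattacked, with termination guaranteed after finitely many steps by the same finiteness.
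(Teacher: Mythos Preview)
Your overall architecture matches the paper's: push $\Sigma$ to both sides using Proposition~\ref{prop:barrier:unstable}, assume by contradiction that $W(M^+_\eta,\partial M^+_\eta,\Sigma_\eta)>\area(\Sigma_\eta)$, invoke Theorem~\ref{new minimal surface} (after a bumpy perturbation) to produce $\Gamma\subset M^+_\eta$, and then derive a contradiction. The gap is in the last step: your trichotomy does not close.

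In the orientable stable branch you write that $\area(\Sigma_i)\geq\mc A_{\mathcal S}$ is ``incompatible with the explicit upper sweepout $\{\Sigma_s\}\cup\Psi^\pm$''. But $\Psi^\pm$ is exactly what you are trying to build, so this is circular; and ``choosing $\Sigma$ to be of maximal area below $\mc A_{\mathcal S}$'' is not allowed, since the proposition must be proved for every such $\Sigma$. (Also, Theorem~\ref{ques:stable:compt:multi} concerns \emph{stable} FBMHs and does not by itself give finiteness of all FBMHs of bounded area.) In the non-orientable branch, Proposition~\ref{prop:non-orient:less} gives a stable FBMH $\Sigma'$ with $\area(\Sigma')\leq\area(\Sigma_i)$, but nothing forces $\area(\Sigma_i)<\mc A_{\mathcal S}$, so no contradiction follows; your ``iteration inside the subregion'' is not specified and there is no monotone quantity driving it. Similarly, in the unstable branch the proposed shrinking of $M^+$ by barriers does not terminate for any reason you state.

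The idea you are missing is the one the paper uses: rather than analysing $\Gamma_i$ by its own area, cut $M^+_\eta$ open along $\Gamma_i$ and minimise area in the homology class of $\Sigma_\eta$ in the cut manifold (Proposition~\ref{minimizer}). Because the competitor $\Sigma_\eta$ has area $<\area(\Sigma)$, the minimiser $S$ is a stable orientable FBMH with $\area(S)\leq\area(\Sigma_\eta)<\area(\Sigma)<\mc A_{\mathcal S}$, and one only has to check $S$ is not one of the new portions (which have area $\geq\area(\Gamma_i)$ or are handled by the non-degeneracy/barrier construction). This simultaneously shows each $\Gamma_i$ is orientable and generically separates $M^+_\eta$; then one applies Proposition~\ref{minimizer} once more on the side containing $\Sigma_\eta$ to get the final contradiction. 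A side remark: your parenthetical about a ``non-separating case'' is superfluous, since if $\Sigma$ did not generically separate $M$ then Proposition~\ref{minimizer} (with $T=\emptyset$) would already produce a stable FBMH of area $\leq\area(\Sigma)<\mc A_{\mathcal S}$.
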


\begin{proof}[Proof of Proposition \ref{prop:less_area:sweepout}]
Let $X\in\mathfrak X(M,\Sigma)$ be a vector field such that $X\big|_{\Sigma}=-f\mf{n}$, where $f$ is the function given in the proof of Proposition \ref{prop:barrier:unstable}. Let $(F_s)_{-\tau\leq s\leq\tau}$ be the family of diffeomorphisms corresponding to $X$.

We first claim that $\Sigma$ generically separates $M$. If not, a generic type of $\Sigma$ represents a non-zero homology class of $H_n(M,\partial M)$. By virtue of Proposition \ref{minimizer} ($T=\emptyset$), there is a stable FBMH which has area less than $\Sigma$. This is not the case of the assumption.

Hence, $M$ can be divided into $M^+$ and $M^-$ (see Figure VI) in the following way: for any $x\in M$, the available paths are differentiable curves $\gamma:[0,1]\rightarrow M$ from $x$ to $\Sigma$ such that $\gamma(0,1)$ is disjoint from $\Sigma$. Then $M^+$ ($M^-$) is defined to be the collection of $x\in M$ such that there are available paths with $r(\gamma(t))>0$ ($<0$) for some $t$.

\begin{figure}[h]
\begin{center}
\def\svgwidth{0.65\columnwidth}
  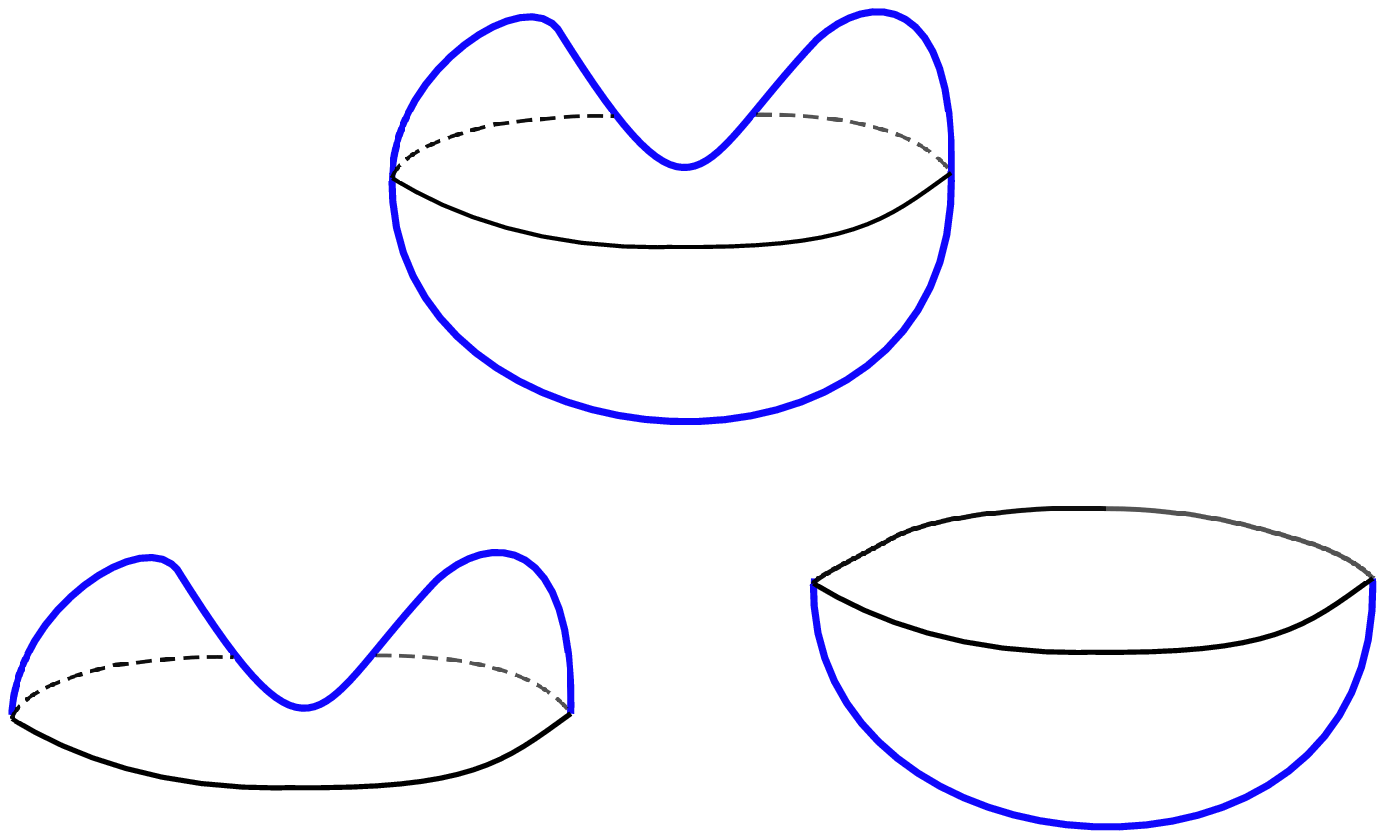
  \label{graph:separate}
  \caption{}
\end{center}
\end{figure}

Define the sweepout locally by
\[\Sigma_s=F_s(\Sigma)\cap M.\]

By the second variation formula of area, it follows that
\[\area(\Sigma_s)<\area(\Sigma),\ \text{for $s\in(0,\tau)$ .}\]

Set
\begin{gather*}
  M^+_s:=M^+\setminus(\cup_{0\leq t<s}\Sigma_t),\\
  \partial M^+_s:=\partial M\cap M^+_s.
\end{gather*}

By  Proposition \ref{prop:barrier:unstable}, we obtain that $\Sigma_\eta$ is a barrier for $\eta$ small enough since $\Sigma$ is unstable.

Now we claim that $(M^+_\eta,\partial M^+_\eta,\Sigma_\eta)$ admits a sweepout
\[\Phi^+:I\rightarrow \left(\mathcal Z_n(M^+_\eta,\partial M^+_\eta),\{\llbracket \Sigma_\eta\rrbracket\},\{0\}\right)\]
satisfying
\[\M(\Phi^+(t))\leq \area(\Sigma_\eta) \text{\ for\ } t\in I.\]
Assume on the contrary that there are no such sweepouts. It follows that
\[W(M^+_\eta,\partial M^+_\eta,\Sigma_\eta)>\area(\Sigma_\eta).\]
By Theorem \ref{thm:bumpy}, the metric on $M$ can be perturbed slightly (still denoted by $M$ with new metric) such that
\begin{itemize}
    \item $\Sigma_\eta$ has positive mean curvature;
    \item $W(M^+_\eta,\partial M^+_\eta,\Sigma_\eta)>\area(\Sigma_\eta)$;
    \item no finite covers of any FBMHs with boundary on $\partial M\cap M^+_\eta$ admit a non-trivial Jacobi field.
\end{itemize}
Moreover, we can assume that $\area(\Sigma)<\mathcal A_{\mathcal S}(M,\partial M)$ holds in the perturbed metric since $\mathcal A_{\mathcal S}(M,\partial M)$ is lower semi-continuous by Corollary \ref{cor:lower}.

By virtue of Theorem \ref{new minimal surface}, there is a FBMH $\Gamma$ in $M^+_\eta$ such that $\partial\Gamma\cap\Sigma_\eta=\emptyset$. Let $\Gamma_i$ be a connected component of $\Gamma$. We prove that $\Gamma_i$ generically separates $M^+_\eta$ by arguing in two cases.

In the first case, $\Gamma_i$ is assumed to satisfy
\begin{equation}
  \area(\Gamma_i)>\area (\Sigma).
\end{equation}
Let us prove it by two steps.
\begin{claim}\label{claim_orientable}
$\Gamma_i$ is orientable.
\end{claim}
We prove this claim by a contradiction argument. We first consider the `manifold' $\widetilde M^+_\eta$ which is constructed by opening $M^+_\eta$ along $\Gamma_i$. Then $\widetilde M^+_\eta$ has boundary $\partial\widetilde M^+$ and two portions: one is $\Sigma_\eta$ which has positive mean curvature and the other $\widetilde \Gamma_i$ is the 2-sheeted covering of $\Gamma_i$ which is a connected minimal hypersurface with free boundary. Since $\widetilde\Gamma_i$ is non-degenerate, it follows that $\lambda_1(\widetilde\Gamma_i)\neq 0$. By Proposition \ref{prop:barrier:unstable} and Proposition \ref{prop:barrier:stable}, there is a family of barriers around $\widetilde\Gamma_i$. It is clear that $\Sigma_\eta$ represents a non-trivial element in $H_n(\widetilde M^+_\eta,\partial\widetilde M^+_\eta;\mathbb Z)$. Applying Proposition \ref{minimizer}, there is an orientable, stable FBMH $S$ in $\widetilde M^+_\eta$ with $\area(S)<\area(\Sigma_\eta)$ since it is an area minimizer of the homology class $[\Sigma_\eta]$. It follows that $\area(S)<\area(\Sigma)$. This implies that $S\neq\widetilde\Gamma_i$, and hence $S\cap\Gamma_i=\emptyset$. By the definition of $\widetilde M^+_\eta$, $S$ can also be regarded as a stable minimal hypersurface of $M^+_\eta$. However, this leads to a contradiction with $\area(\Sigma)<\mathcal A_\mathcal S(M,\partial M)$. Claim \ref{claim_orientable} is proved.

\vspace{0.5em}
Similarly, we have
\begin{claim}\label{claim_separable}
$\Gamma_i$ generically separates $M^+_\eta$.
\end{claim}
Assuming on the contrary that $\Gamma_i$ does not generically separate $M^+_\eta$, there is a `manifold' $\overline M^+_\eta$ constructed by opening $M^+_\eta$  along $\Gamma_i$. Then  $\overline M^+_\eta$ has three portions: one is $\Sigma_\eta$ which has positive mean curvature and the other two $\Gamma_p$ and $\Gamma_q$ are both diffeomorphic to $\Gamma_i$. Since $\Gamma_p$ and $\Gamma_q$ have no non-trivial Jacobi fields, there exist a family of barriers around $\Gamma_p$ and $\Gamma_q$. Applying Proposition \ref{minimizer}, there is an orientable, stable minimal hypersurface $S$ in $\overline M^+_\eta$ satisfying
\[\area(S)<\area(\Sigma)<\area(\Gamma_i).\]
Hence, $S$ can not be any component of the portions, which implies that $S$ can be regarded as a stable minimal hypersurface in $M^+_\eta$. This contradicts  $\area(\Sigma)<\mathcal A_\mathcal S(M,\partial M)$. Hence, Claim \ref{claim_separable} is true.

\vspace{0.5em}
In the other case, we have $\area(\Gamma_i)\leq\area(\Sigma)$. We can also prove that $\Gamma_i$ generically separates $M^+_\eta$. Let us argue by contradiction again.
\begin{claim}
  $\Gamma_i$ is orientable.
\end{claim}
If $2\area(\Gamma_i)>\area(\Sigma)>\area(\Gamma_i)$, this can been shown by the same arguments with Claim \ref{claim_orientable}; If $\area(\Sigma)\geq 2\area(\Gamma_i)$, it follows from Proposition \ref{prop:non-orient:less} that there is a stable minimal hypersurface $\Gamma'$ with $\area(\Gamma_i')\leq \area(\Gamma_i)$, which is not the case of assumption.

\vspace{0.25em}
Hence, $\Gamma_i$ is orientable when $\area(\Gamma_i)<\area(\Sigma)$. Assuming on the contrary that it does not generically separate $M^+_\eta$, $\Gamma_i$ represents a non-trivial element in $H_n(M^+,\partial M^+;\mathbb Z)$. By virtue of Proposition \ref{minimizer}, there is a stable, orientable FBMH $\Gamma''$ in $M^+_\eta$ which satisfies $\area(\Gamma'')\leq \area(\Gamma_i)$. This implies
\[\area(\Sigma)\geq\area(\Gamma'')\geq\mathcal A_{\mathcal S}(M,\partial M),\]
which leads to a contradiction.

\vspace{0.7em}
Let us continue our arguments. Overall, $\Gamma_i$ generically separates $M^+_\eta$. Applying Proposition \ref{minimizer} to the component of $M^+_\eta\setminus\Gamma_i$ containing $\Sigma_\eta$, there is an orientable stable minimal hypersurface $S'$ in it with 
\[\area(S')\leq\area(\Sigma_\eta)<\area(\Sigma),\]
which is a contradiction to $\area(\Sigma)<\mathcal A_\mathcal S(M,\partial M)$.

\vspace{0.5em}
Thus we have proved the existence of a good sweepout of $(M^+_\eta,\partial M^+_\eta,\Sigma_\eta)$. By the proof of Proposition \ref{prop:sweepout:B to NB}, it follows that $F(\Phi^+)=-\llbracket M^+_\eta\rrbracket$. Then in $M^-$, we can similarly produce a good sweepout $\Phi^-(t)$ of $(M^-_\eta,\partial M^-_\eta,\Sigma_{-\eta})$. Now define the sweepout as follows
\begin{equation*}
\Phi(t)=\left\{\begin{aligned}
         \Phi^-(1-4t) \ \ \ \ & t\in[0,1/4)\\
         \llbracket\Sigma_{(2t-3/2)\eta}\rrbracket \ \ \ \ & t\in[1/4,1/2)\\
         -\llbracket\Sigma_{(2t-1)\eta}\rrbracket\ \  \ \ &t\in[1/2,3/4)\\
         -\Phi^+(4t-3)\ \ \ \  & t\in[3/4,1].
        \end{aligned}\right.
\end{equation*}

$\Phi$ is continuous in the flat topology of $\mathcal Z_n(M,\partial M)$. Then by the same arguments with Proposition \ref{prop:sweepout:B to NB}, we conclude that $F(\Phi)=\llbracket M\rrbracket$.
\end{proof}

Similar to Proposition \ref{prop:less_area:sweepout}, good sweepouts can also be produced from non-orientable minimal hypersurfaces:
\begin{proposition}\label{prop:sweepout_nonori}
For $\Sigma\in\mathcal U_\mathcal S$ with $2\area(\Sigma)=\mathcal A_{\mathcal S}(M,\partial M)$ such that the 2-sheeted covering of $\Sigma$ is unstable, there is a sweepout
\[\Phi:I\rightarrow\left( \mathcal Z_n(M,\partial M),\{0\}\right),\]
such that
\begin{enumerate}
  \item\label{sweep:fundamental} $F(\Phi)=\llbracket M\rrbracket$;
  \item\label{sweep:maximum} $\M(\Phi(t))<2\area(\Sigma)$ for all $t$;
  \item\label{sweep:foliation} there exists $\epsilon>0$, $\Phi(t)=\llbracket\Sigma_t\rrbracket$ for $t\in (0,\epsilon]$, where $\Sigma_t$ is defined in Proposition \ref{prop:barrier:unstable} in $M\setminus\Sigma$;
  \item\label{sweep:continuous} $|\Phi(t)|$ converges to $2[\Sigma]$ as $t\rightarrow 0$ in the sense of varifolds.
\end{enumerate}
\end{proposition}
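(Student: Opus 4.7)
The strategy adapts Proposition \ref{prop:less_area:sweepout} by working in the orientation double cover of $M$. Open $M$ along $\Sigma$ to obtain a compact manifold $\overline M$ whose boundary portion is the connected orientable double cover $\widetilde\Sigma$ of $\Sigma$; the natural projection $\pi\colon\overline M\to M$ is a local isometry that is $2$-to-$1$ from $\widetilde\Sigma$ onto $\Sigma$ and $1$-to-$1$ from $\overline M\setminus\widetilde\Sigma$ onto $M\setminus\Sigma$. By hypothesis $\widetilde\Sigma$ is unstable in $\overline M$, so Proposition \ref{prop:barrier:unstable} produces a family $\{\widetilde\Sigma_s\}_{0\le s\le\tau}$ of barriers in $\overline M$ with $\area(\widetilde\Sigma_s)$ strictly decreasing. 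Setting $\Sigma_t:=\pi(\widetilde\Sigma_t)\subset M\setminus\Sigma$, the injectivity of $\pi$ away from $\widetilde\Sigma$ yields $\mathbf M(\llbracket\Sigma_t\rrbracket)=\area(\widetilde\Sigma_t)<2\area(\Sigma)$ for $t>0$ and $|\Sigma_t|\to 2[\Sigma]$ as $t\to 0^+$ in the varifold sense, while $\llbracket\Sigma_t\rrbracket\to 0$ in the flat topology because the two sheets of $\widetilde\Sigma$ project onto $\Sigma$ with opposite orientations.

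Fix $\eta>0$ small and set $\overline M_\eta:=\overline M\setminus\bigcup_{0\le t<\eta}\widetilde\Sigma_t$; then $(\overline M_\eta,\partial\overline M_\eta,\widetilde\Sigma_\eta)$ is a compact manifold whose portion $\widetilde\Sigma_\eta$ is a barrier. The main step is to construct a sweepout
\[\overline\Phi\colon I\longrightarrow\bigl(\mathcal Z_n(\overline M_\eta,\partial\overline M_\eta),\{\llbracket\widetilde\Sigma_\eta\rrbracket\},\{0\}\bigr)\]
with $\mathbf M(\overline\Phi(t))<2\area(\Sigma)$ for all $t$. Assuming no such sweepout exists, and possibly perturbing the metric via Theorem \ref{thm:bumpy} so that the strict inequality $\area(\widetilde\Sigma_\eta)<2\area(\Sigma)$ is preserved, one gets $W(\overline M_\eta,\partial\overline M_\eta,\widetilde\Sigma_\eta)\ge 2\area(\Sigma)>\area(\widetilde\Sigma_\eta)$. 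Theorem \ref{new minimal surface} then produces a FBMH $\Gamma\subset\overline M_\eta$ disjoint from $\widetilde\Sigma_\eta$. Running the case analysis of Claims \ref{claim_orientable} and \ref{claim_separable}, combined with Propositions \ref{prop:non-orient:less} and \ref{minimizer} applied to the $\mathbb Z$-class of $\widetilde\Sigma_\eta$ in the appropriately opened-along-$\Gamma_i$ manifold, one shows each component $\Gamma_i$ generically separates $\overline M_\eta$ and yields an orientable stable FBMH $S\subset\overline M_\eta$ with $\area(S)\le\area(\widetilde\Sigma_\eta)<2\area(\Sigma)$. Projecting by $\pi$ produces an orientable stable FBMH $\pi(S)\subset M$ of the same area, contradicting $\mathcal A_{\mathcal S}(M,\partial M)=2\area(\Sigma)$.

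Given the sweepout $\overline\Phi$, assemble $\Phi\colon I\to\mathcal Z_n(M,\partial M)$ by setting $\Phi(0)=0$, $\Phi(t)=\llbracket\Sigma_t\rrbracket$ for $t\in(0,\eta]$, and $\Phi(t)=\pi_\sharp\overline\Phi(\psi(t))\llcorner M$ for $t\in[\eta,1]$ under a suitable reparameterization $\psi$. Continuity at $t=0$ in the flat topology comes from the opposite-orientation cancellation recorded in the first paragraph, and elsewhere from the regularity of $\overline\Phi$. Properties (\ref{sweep:maximum}), (\ref{sweep:foliation}), and (\ref{sweep:continuous}) are then immediate; property (\ref{sweep:fundamental}) follows from Lemma \ref{lemma:portion imply sweep} applied to $\overline\Phi$ together with the fact that the initial family $\{\llbracket\Sigma_t\rrbracket\}_{t\in(0,\eta]}$ sweeps out the tubular neighborhood of $\Sigma$ in $M$. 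The principal obstacle is the contradiction argument of the second paragraph: unlike Proposition \ref{prop:less_area:sweepout} where the strict inequality $\area(\Sigma)<\mathcal A_{\mathcal S}$ (preserved under small bumpy perturbations by Corollary \ref{cor:lower}) drives the proof, here only the equality $2\area(\Sigma)=\mathcal A_{\mathcal S}$ is available, so the argument must be driven entirely by the strict drop $\area(\widetilde\Sigma_\eta)<\area(\widetilde\Sigma)=2\area(\Sigma)$ coming from the instability of $\widetilde\Sigma$, combined with the orientability-preservation of $\pi$ on $\overline M\setminus\widetilde\Sigma$ needed to convert stable orientable competitors in $\overline M_\eta$ into stable orientable competitors in $M$.
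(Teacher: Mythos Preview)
Your approach is exactly the one in the paper: open $M$ along $\Sigma$ to obtain a manifold with portion $\widetilde\Sigma$, invoke the argument of Proposition \ref{prop:less_area:sweepout} to produce a good sweepout there, and push it forward to $M$. The paper's own proof is extremely terse (it simply says ``by above arguments, $\widetilde M$ admits a sweepout'') and leaves the reader to verify that the contradiction machinery of Proposition \ref{prop:less_area:sweepout} transfers; you have correctly supplied those details, including the key observation that although one no longer has a strict inequality $\area(\widetilde\Sigma)<\mathcal A_{\mathcal S}$, the strict drop $\area(\widetilde\Sigma_\eta)<\area(\widetilde\Sigma)=\mathcal A_{\mathcal S}(M,\partial M)$ together with lower semicontinuity of $\mathcal A_{\mathcal S}$ and the fact that $\pi$ restricts to an isometry on $\overline M\setminus\widetilde\Sigma$ (so stable orientable FBMHs in $\overline M_\eta$ project to stable orientable FBMHs in $M$) is enough to close the contradiction.
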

\begin{proof}
Denote $\widetilde M$ as the manifold produced by opening up $M$ along $\Sigma$. $\widetilde M$ is a manifold with boundary and a portion $\widetilde\Sigma$, which is a double cover of $\Sigma$. By above arguments, $\widetilde M$ admits a sweepout
\[\widetilde\Phi:I\longrightarrow\mathcal Z_n(\widetilde M,\partial\widetilde M,\widetilde \Sigma).\]
Therefore, $M$ admits a sweepout
\[\Phi:I\longrightarrow \mathcal Z_n(M,\partial M),\]
which can be defined by
\begin{itemize}
  \item $\Phi(0)=0$;
  \item $\Phi(t)=\widetilde \Phi(t)$.
\end{itemize}
It is easy to verify that $\Phi$ is the sweepout in the proposition.
\end{proof}

By the Catenoid Estimates \cite{KMN16}, such a sweepout can be deformed to another one which has less maximal slice:
\begin{proposition}\label{prop:non-min-max}
Suppose that $\Sigma\in \mathcal U$ and there is a sweepout $\Phi$ of $(M,\partial M)$ satisfying (\ref{sweep:fundamental})(\ref{sweep:maximum})(\ref{sweep:foliation})(\ref{sweep:continuous}) in Proposition \ref{prop:sweepout_nonori}. Then
 \[\inf_{F(\Pi)=\llbracket M\rrbracket}\mathbf L(\Pi)<2\area (\Sigma).\]
As a corollary, $2\Sigma$ can not be the min-max minimal hypersurface corresponding to the fundamental class.
\end{proposition}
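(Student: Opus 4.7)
The plan is to apply the catenoid estimate of Ketover--Marques--Neves \cite{KMN16} to the sweepout $\Phi$ on the subinterval where it is close to the double cover of $\Sigma$, producing a homotopic sweepout whose maximum mass is strictly below $2\area(\Sigma)$.

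First I would isolate the only interval where mass can approach $2\area(\Sigma)$. By property (\ref{sweep:maximum}), $\mathbf{M}(\Phi(t))<2\area(\Sigma)$ pointwise, while by (\ref{sweep:continuous}), $|\Phi(t)|\to 2[\Sigma]$ as $t\to 0$, so the only obstruction to a strict bound on $\sup_t \mathbf{M}(\Phi(t))$ comes from $t$ near $0$. By (\ref{sweep:foliation}), on this interval $\Phi(t)=\llbracket \Sigma_t\rrbracket$, where $\Sigma_t$ is the barrier foliation of Proposition \ref{prop:barrier:unstable} applied to the unstable two-sheeted cover $\widetilde{\Sigma}$ of $\Sigma$. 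In particular, for $t$ small, $\Sigma_t$ is a properly embedded hypersurface which, viewed in a tubular neighborhood of $\Sigma$, looks like two nearly parallel sheets collapsing onto $\Sigma$.

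The second step is the catenoid insertion. On a smaller interval $(0,\varepsilon']$ I would replace $\Sigma_t$ by a family obtained by gluing in a catenoid-shaped neck between the two sheets, using the estimate of \cite{KMN16} applied equivariantly in the orientation double cover $\widetilde{M}$ (and then descending to $M$). The catenoid estimate provides a uniform constant $c>0$ such that the modified family has mass at most $2\area(\Sigma)-c$ for every $t\in (0,\varepsilon']$. Splicing this modification into $\Phi$ and leaving $\Phi$ unchanged on $[\varepsilon',1]$, I obtain a continuous one-parameter family $\Phi'\in\mathcal{P}_1(M)$ with $F(\Phi')=F(\Phi)=\llbracket M\rrbracket$ and
\[
\sup_{t\in I}\mathbf{M}(\Phi'(t))\le 2\area(\Sigma)-c' <2\area(\Sigma)
\]
for some $c'>0$ depending on $\Phi$ and $c$.

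Finally, discretizing $\Phi'$ via the Discretization Theorem yields a class $\Pi'\in \pi_1^\sharp(\mathcal{Z}_n(M,\partial M;\mathbf{M}),\{0\})$ with $F(\Pi')=\llbracket M\rrbracket$ and $\mathbf{L}(\Pi')\le \sup_t \mathbf{M}(\Phi'(t))<2\area(\Sigma)$, giving the asserted strict inequality. The corollary is immediate: were $2\Sigma$ the min-max minimal hypersurface for the fundamental class, the width would equal $\mathbf{M}(2\llbracket\Sigma\rrbracket)=2\area(\Sigma)$, contradicting the bound just proved. The main obstacle I anticipate is checking that the catenoid estimate of \cite{KMN16}, originally stated for two-sided minimal hypersurfaces in closed ambient manifolds, can be carried out in the free boundary and one-sided setting; this amounts to localizing the neck in the interior of $\Sigma$ away from $\partial\Sigma$ (where $\Sigma_t$ is an unbranched two-sheeted graph over $\Sigma$) and verifying that the spliced family remains continuous in flat topology across $t=0$.
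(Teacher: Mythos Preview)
Your proposal is correct and follows essentially the same approach as the paper: the paper's proof is a brief sketch that constructs a new sweepout $\Phi'$ agreeing with $\Phi$ for $t>\epsilon$ and satisfying $\sup_{t\in I}\mathbf M(\Phi'(t))<2\area(\Sigma)$ via the catenoid estimate of \cite{KMN16}. Regarding your anticipated obstacle, the paper simply cites \cite{Wang}*{Appendix C} for the adaptation of the catenoid construction to the free boundary setting, which is precisely the point you flagged.
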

\begin{proof}
The key point here is to construct another sweepout $\Phi'$ of $(M,\partial M)$ satisfying
\begin{enumerate}
  \item $F(\Phi)=\llbracket M\rrbracket$;
  \item $\Phi'(t)=\Phi(t)$ for all $t>\epsilon$;
  \item $\sup_{t\in I}\M(\Phi'(t))<2\area(\Sigma)$.
\end{enumerate}
The construction here is similar to \cite{KMN16}. We also refer to \cite{Wang}*{Appendix C} for the case in manifolds with boundary.
\end{proof}

\section{Proof of the Main Theorem}\label{sec:proof}
In this section, we give the proof of Main Theorem:
\begin{theorem}
Let $(M^{n+1},\partial M,g)$ be a compact manifold with boundary and $2\leq n\leq 6$. Then there exists a smooth embedded free boundary minimal hypersurface $\Sigma$ in $M$ such that $\mc{A}_1(M,\partial M)$ is realized by $\Sigma$. Moreover, $\Sigma$ is one of  the following:
\begin{enumerate}
    \item $\Sigma\in\mathcal O$ and has index $\leq 1$. In the case of $\mathrm{ind}(\Sigma)=1$, $\Sigma$ is the min-max minimal hypersurface corresponding to $[M]$ in $H_{n+1}(M,\partial M)$.
    \item $\Sigma\in\mathcal U$ is stable. In this case, $2\area(\Sigma)=\mathcal A_1(M,\partial M)$. Moreover, the 2-sheeted covering is also stable.
\end{enumerate}
\end{theorem}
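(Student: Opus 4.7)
The plan is to dichotomize on whether $\mathcal{A}_1(M,\partial M)$ equals $\mathcal{A}_{\mathcal{S}}(M,\partial M)$. In the equal case (\emph{Case A}), Theorem~\ref{ques:stable:realize} directly provides a stable FBMH $\Sigma$ with counted area $\mathcal{A}_1$. If $\Sigma \in \mathcal{O}$, it is stable of index $0$ and yields conclusion (1). If $\Sigma \in \mathcal{U}$, then $\Sigma$ itself is already stable with $2\area(\Sigma)=\mathcal{A}_1$, and what remains is stability of its $2$-sheeted cover. I would argue this by contradiction: instability of the cover would let Propositions~\ref{prop:sweepout_nonori} and~\ref{prop:non-min-max} produce a sweepout of $(M,\partial M)$ representing $\llbracket M\rrbracket$ with maximum mass strictly below $\mathcal{A}_1$, and applying Theorem~\ref{thm:min-max} to such a sweepout would produce a FBMH whose counted area is strictly less than $\mathcal{A}_1$, contradicting the definition of $\mathcal{A}_1$.

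In the strict case $\mathcal{A}_1 < \mathcal{A}_{\mathcal{S}}$ (\emph{Case B}), I would pick a minimizing sequence $\{\Sigma_i\}$ of FBMHs whose counted areas tend to $\mathcal{A}_1$; each $\Sigma_i$ is necessarily unstable, since its counted area lies strictly below $\mathcal{A}_{\mathcal{S}}$. For orientable $\Sigma_i$, Proposition~\ref{prop:less_area:sweepout} embeds it in a sweepout of $(M,\partial M)$ representing $\llbracket M\rrbracket$ with supremum mass equal to $\area(\Sigma_i)$. For non-orientable $\Sigma_i$, the analogous construction in the manifold obtained by opening $M$ along $\Sigma_i$ (as in Proposition~\ref{prop:sweepout_nonori}), improved by the catenoid estimate of Proposition~\ref{prop:non-min-max}, yields sweepouts of $(M,\partial M)$ representing $\llbracket M\rrbracket$ with supremum mass arbitrarily close to $2\area(\Sigma_i)$ from below. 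Consequently the min-max width
\[
W_F \;:=\; \inf\{\mathbf{L}(\Pi) \,:\, F(\Pi)=\llbracket M\rrbracket\}
\]
satisfies $W_F \leq \mathcal{A}_1$. Theorem~\ref{thm:min-max} then realizes $W_F$ by $V=\sum_i n_i[\Gamma_i]$ with each $\Gamma_i$ a connected, almost properly embedded FBMH. The defining lower bounds coming from $\mathcal{A}_1$, combined with the even-multiplicity requirement for non-orientable outputs (\cite{Zhou15, Wang}), force every term $n_i\area(\Gamma_i)\geq \mathcal{A}_1$; comparison with $\sum n_i\area(\Gamma_i) = W_F \leq \mathcal{A}_1$ pins everything down to a single component $\Gamma$ with either $n=1$ and $\Gamma\in\mathcal{O}$ (so $\area(\Gamma)=\mathcal{A}_1$), or $n=2$ and $\Gamma\in\mathcal{U}$ (so $2\area(\Gamma)=\mathcal{A}_1$).

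For the index/stability refinements: for an orientable Case B output, the one-parameter min-max index bound gives $\mathrm{index}(\Gamma)\leq 1$, and index $0$ is excluded because it would make $\Gamma$ stable with area $<\mathcal{A}_{\mathcal{S}}$, contradicting the definition of $\mathcal{A}_{\mathcal{S}}$; hence $\Gamma$ has index exactly $1$ and is by construction the fundamental-class min-max hypersurface. For a non-orientable output in either case, the Propositions~\ref{prop:sweepout_nonori} and~\ref{prop:non-min-max} contradiction again forces stability of the $2$-sheeted cover, which passes down to $\Gamma$. I expect the main obstacle to be the non-orientable analysis: constructing sweepouts of $(M,\partial M)$ representing $\llbracket M\rrbracket$ with the correct mass upper bound requires passing to the opened-up manifold, producing the sweepout there via the orientable machinery of Proposition~\ref{prop:less_area:sweepout}, and then applying the catenoid estimate to beat the naive bound of $2\area$; the even-multiplicity phenomenon from \cite{Zhou15, Wang} is essential for the bookkeeping that identifies the min-max output as a single component with the correct multiplicity, and for closing the contradiction loop that proves stability of the $2$-sheeted cover.
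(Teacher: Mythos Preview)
Your overall dichotomy and your Case A match the paper's proof exactly. In Case B, your outline is also broadly correct, but two points diverge from the paper's argument.

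First, the paper does not treat non-orientable hypersurfaces in Case B at all: right at the start of Case B it observes, via Proposition~\ref{prop:non-orient:less}, that no non-orientable FBMH $\Gamma$ can satisfy $2\area(\Gamma)<\mathcal{A}_{\mathcal{S}}$ (since $\Gamma$ would yield a connected stable FBMH of no greater area). Hence every FBMH with counted area below $\mathcal{A}_{\mathcal{S}}$ is orientable, so only Proposition~\ref{prop:less_area:sweepout} is needed for the minimizing sequence, and a non-orientable min-max output $T$ with $2\area(T)=\mathcal{A}_1<\mathcal{A}_{\mathcal{S}}$ is ruled out by the very same observation. Your route through adapted versions of Propositions~\ref{prop:sweepout_nonori} and~\ref{prop:non-min-max} can be made to work (it eventually forces $T\in\mathcal{U}_{\mathcal{S}}$, contradicting $\mathcal{A}_1<\mathcal{A}_{\mathcal{S}}$), but note that those propositions as stated carry the hypotheses $\Sigma\in\mathcal{U}_{\mathcal{S}}$ and $2\area(\Sigma)=\mathcal{A}_{\mathcal{S}}$, which do not hold here; you would need to re-run their proofs rather than cite them.

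Second, and more substantively, the paper does \emph{not} invoke any ``one-parameter min-max index bound'' as a black box. In the free boundary Almgren--Pitts framework of \cite{LZ16} used here, no such theorem is in the paper's toolkit. Instead the paper proves $\mathrm{index}(\Gamma)\leq 1$ directly: assuming $\mathrm{index}(\Gamma)\geq 2$, it picks two independent unstable directions $f,u$ with $Q(f,u)=0$, builds a two-parameter family $\Gamma_{t,s}=G_s(F_t(\Gamma))$ from flows of vector fields in $\mathfrak{X}(M,\Gamma)$ extending $f\mathbf{n}$ and $u\mathbf{n}$, uses the second variation to get $\area(\Gamma_{t,s})<\area(\Gamma)-\delta$ for $|t|+|s|>\theta/2$, and then replaces the central portion of the sweepout $\Phi$ from Proposition~\ref{prop:less_area:sweepout} by $t\mapsto\llbracket\Gamma_{t,\theta\phi(t/\theta)}\cap M\rrbracket$ (with $\phi$ a cutoff) to obtain a new sweepout $\Phi'$ with $F(\Phi')=\llbracket M\rrbracket$ and $\sup_t\mathbf{M}(\Phi'(t))<\area(\Gamma)=\mathbf{L}(\Pi_M)$, a contradiction. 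Your proposal has a genuine gap at this step unless you either supply this hands-on argument or cite a free-boundary Morse index bound that the paper does not establish or reference.
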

\begin{proof}
First, we consider the case  $\mathcal{A_S}(M,\partial M)=\mathcal A_1(M,\partial M)$. By Theorem \ref{ques:stable:realize}, either there is $\Sigma\in\mathcal O_\mathcal S$ such that $\area(\Sigma)=\mathcal A_\mathcal S(M,\partial M)$ or  there is $\Sigma\in\mathcal U_\mathcal S$ such that $2\area(\Sigma)=\mathcal A_\mathcal S(M,\partial M)$. If the latter case happens, we claim that the 2-sheeted covering of $\Sigma$ is stable. If not, $\Sigma\in\mathcal U_\mathcal S$ and has unstable 2-sheeted covering. By Proposition \ref{prop:sweepout_nonori} and \ref{prop:non-min-max}, there is a sweepout
\[\Phi:I\longrightarrow \mathcal Z_n(M,\partial M),\]
such that
\begin{enumerate}
  \item $F(\Phi)=\llbracket M\rrbracket$;
  \item $\mathbf L(\Phi)<2\area(\Sigma)$.
\end{enumerate}
By Theorem \ref{thm:min-max}, there is an orientable FBMH $\Sigma'$ with $\area(\Sigma')\leq\mathbf L(\Phi)$ or a non-orientable one $\Sigma''$ with $2\area(\Sigma'')\leq\mathbf L(\Phi)$. Combining the inequalities together,
\[\mathcal A_1(M,\partial M)\leq \area(\Sigma')(2\area(\Sigma''))\leq\mathbf L(\Phi)<2\area(\Sigma)=\mathcal A_1(M,\partial M),\]
which leads to a contradiction.

\vspace{0.5em}
The other case is $\mathcal A_1(M,\partial M)<\mathcal A_\mathcal S(M,\partial M)$. By Proposition \ref{prop:non-orient:less}, it is easy to see that there does not  exist non-orientable FBMH $\Gamma$ such that $2\area(\Gamma)<\mc{A}_\mc{S}(M,\partial M)$. Now we suppose that $\Sigma\in \mc{O}$ is any orientable FBMH such that \[\mc{A}_1(M,\partial M)\leq \area(\Sigma)<\mc{A_S}(M,\partial M).\]
Let $\Phi^\Sigma$ be the sweepout constructed in Proposition \ref{prop:less_area:sweepout}. Denote $\Pi_M$ as the homotopy class corresponding to the fundamental class. By the construction in \ref{prop:sweepout} and Proposition \ref{prop:sweepout_nonori}, it follows that $\Phi^{\Sigma}\in\Pi_M$ and
\begin{equation}\label{eq:from sweepout}
\mathbf L(\Pi_M)\leq\mathcal A_1(M,\partial M).
\end{equation}
Using the min-max theory for compact manifolds with boundary which is developed by Li-Zhou \cite{LZ16}*{Theorem 4.21, Theorem 5.2}, there exists a stationary integral varifold $V$, which is supported on finitely many FBMHs $\Gamma_i$ with multiplicity $n_i$,  such that \[\mf{L}(\Pi_M)=\|V\|(M) =\sum n_i\area(\Gamma_i).\]
Let $\Gamma_i$ be a component of $\Gamma$. If it is non-orientable, then $n_i$ must be even by the arguments in \cite{Zhou15} (see also \cite{Wang}*{Appendix B}). Hence,
\begin{equation}
\mathbf L(\Pi_M)\geq\mathcal A_1(M,\partial M).
\end{equation}
Comparing with (\ref{eq:from sweepout}), we conclude that $\mathrm{\spt}(V)$ is connected and $V=\llbracket \Gamma\rrbracket$ for some $\Gamma\in\mathcal O$ or $V=2\llbracket T\rrbracket$ for some $T\in\mathcal U$.

In the first case, $\Gamma$ is unstable since $\area(\Gamma)=\mathbf{L}(\Pi_M)\leq \mathcal A_1(M,\partial M)<\mathcal A_{\mathcal S}(M,\partial M)$. Obviously, $\area(\Gamma)\geq\mathcal A_1(M,\partial M)$. We conclude that $\area(\Gamma)=\mathcal A_1(M,\partial M)$. Moreover, we claim that $\Gamma$ has index one.

Let $\Phi:I\rightarrow\mathcal Z_n(M,\partial M)$ be the sweepout given by Proposition \ref{prop:less_area:sweepout}. By the process of construction, there exists $\epsilon>0$ such that $\Phi(t)=\llbracket\Gamma_t\rrbracket$ for $-\epsilon<t<\epsilon$, where $\{\Gamma_t\}_{-\epsilon<t<\epsilon}$ forms a foliation around $\Gamma$ constructed in Proposition \ref{prop:barrier:unstable}. That is, $\Gamma_s=F_s(\Gamma)\cap M$, where $\{F_s\}$ is a family of diffeomorphisms of $\widetilde M$ corresponding to $X\in \mathfrak X(M,\Gamma)$. Moreover, $X\big|_{\Gamma}=f\n$ where $f>0$ and satisfies
\begin{equation*}
\left\{\begin{array}{ll}
        Lf>0 \ & \text{in\ } \Sigma\\
        \frac{\partial f}{\partial\eta}<h^{\partial M}(\n,\n)f\ & \text{on\ } \partial\Sigma.
        \end{array}\right.
\end{equation*}

Suppose that $\mathrm{ind}(\Gamma)\geq 2$. Then there exists a function $u$ such that $Q$ (see \ref{equ:index_form}) is negative on the two-dimensional space which is generated by $u$ and $f$. Without loss of generality, $u$ can be chosen to satisfy $Q(f,u)=0$. Now let $Y\in \mathfrak X(M,\Gamma)$ be an extension of $u\n$. Denote $\{G_s\}$ to be a family of diffeomorphisms of $\widetilde M$ generated by $Y$. Take $\theta>0$ small enough, i.e. $\theta\ll\epsilon$. Then for $t,s\in(-\theta,\theta)$, set
\[\Gamma_{t,s}=G_s\left(F_t\left(\Gamma\right)\right).\]

Even if $\Gamma$ has touching set, $\Gamma_{t,s}$ is a smoothly embedded hypersurface in $\widetilde M$ with $\partial\Gamma_{t,s}\subset\partial M$.
By the second variation formula,
\begin{gather*}
  \frac{\partial^2}{\partial t^2}\Big|_{s=t=0}\area(\Gamma_{t,s})=Q(f,f)<0;\\
  \frac{\partial^2}{\partial t\partial s}\Big|_{s=t=0}\area(\Gamma_{t,s})=Q(f,u)=0;\\
  \frac{\partial^2}{\partial s^2}\Big|_{s=t=0}\area(\Gamma_{t,s})=Q(u,u)<0.
\end{gather*}

Hence, there exists $\delta>0$ such that
\begin{equation}\label{equ:less max}
\area(\Gamma_{t,s})<\area(\Gamma)-\delta, \text{\ for\ }|t|+|s|>\theta/2.
\end{equation}

Let $\phi$ be a cut-off function such that
\begin{itemize}
  \item $\phi\geq 0$ everywhere;
  \item $\phi(t)=1$ for $t<\frac{1}{2}$;
  \item $\phi(t)=0$ for $t>1$.
\end{itemize}

Let $\Phi'(t)=\Phi(t)$ if $|t|\geq\theta$ and
\[\Phi'(t)=\llbracket \Gamma_{t,\theta\phi(t/\theta)}\cap M \rrbracket, \text{\ if\ } |t|<\theta.\]

We claim that $\Phi'$ is a sweepout of $(M,\partial M)$. By Almgren's Isomorphism, it suffices to prove that $F(\Phi)=\llbracket M\rrbracket$. Recall that $\Gamma$ generically separates $M$ into $M^-$ and $M^+$. Set $\Psi^-(t):I\rightarrow \mathcal Z_n(M^-,\partial M^-)$ as
\[\Psi^-\left(t\right)=\Phi'\left(\frac{1-t}{2}\right).\]
It follows that $\Psi$ is continuous in the flat topology. Moreover, $\Psi^-$ is a sweepout of $(M^-,\partial M^-,\Gamma)$. Similarly, $\Psi^+:I\rightarrow\mathcal Z_n(M^+,\partial M^+)$ with
\[\Psi^+(t)=-\Phi'\left(\frac{1+t}{2}\right)\]
is a sweepout of $(M^+,\partial M^+,\Sigma)$. By the proof of Proposition \ref{prop:sweepout:B to NB}, we have
\[F(\Psi^-)=-\llbracket M^-\rrbracket, \  F(\Psi^+)=-\llbracket M^+\rrbracket.\]
By the definition of $F$, we conclude that $F(\Phi')=-F(\Psi^-)-F(\Psi^+)=\llbracket M\rrbracket$.

Overall, $\Phi'$ is a sweepout and $F(\Phi')=\llbracket M\rrbracket$. By (\ref{equ:less max}),
\[\sup_{x\in I}\M(\Phi'(x))<\area(\Gamma),\]
which leads to a contradiction to $\mathbf L(\Pi_M)=\area (\Gamma)$. Hence, $\mathrm{ind}(\Gamma)=1$.

In the case of $T$ is non-orientable, $T$ is unstable and $2\area(T)=\mathcal A_1(M,\partial M)$. By Proposition \ref{prop:non-orient:less}, there exists a stable FBMH $T'$ such that $\area(T')<\area(T)$, which implies $\mathcal A_1(M,\partial M)>\mathcal A_{\mathcal S}(M,\partial M)$. This contradicts the assumption that $\mathcal A_1(M,\partial M)<\mathcal A_{\mathcal S}(M,\partial M)$. So we finish the proof.
\end{proof}

\appendix
\section{Isoperimetric Lemmas}
In this section, $(M,\partial M,T)$ is a compact manifold with boundary and portion. The following two lemmas still hold:
\begin{lemma}[$\mathcal F$-isoperimetric lemma, c.f. \cite{LZ16}*{Lemma 3.15}] \label{lemma:F-isoperimetric}
There exists $\epsilon_M>0$ and $C_M>1$ depending only on $M$ such that for any $\tau_1,\tau_2\in\mathcal Z_n(M,\partial M)$ with
\[ \mathcal F(\tau_1-\tau_2)<\epsilon_M,\]
there exists $Q\in\mathbf I_{n+1}(M)$, such that
\begin{itemize}
  \item $\mathrm{spt}(K_1-K_2-\partial Q)\subset\partial M$;
  \item $\mathbf M(Q)<C_M\mathcal F(\tau_1-\tau_2),$
\end{itemize}
where $K_1$ and $K_2$ are the canonical representatives of $\tau_1$ and $\tau_2$.
\end{lemma}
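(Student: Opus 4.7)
The plan is to deduce the relative isoperimetric estimate from the standard Almgren isoperimetric inequality in a closed ambient manifold, together with a boundary adjustment that absorbs the flat remainder into $\partial M$.

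First I would embed $(M, \partial M, T)$ isometrically into a closed Riemannian manifold $\widetilde M$ of the same dimension, as is done throughout the paper. By the definition of the flat metric on equivalence classes, the hypothesis $\mathcal F(\tau_1 - \tau_2) < \epsilon_M$ yields representatives $S_i \in \tau_i$ with $\mathcal F_{\widetilde M}(S_1 - S_2) < 2\epsilon_M$, where $S_1 - S_2$ differs from the canonical difference $K_1 - K_2$ only by an $n$-current supported on $\partial M$. Applying Federer's flat decomposition in $\widetilde M$ then gives $U \in \mathcal R_n(\widetilde M)$ and $R \in \mathbf I_{n+1}(\widetilde M)$ with $S_1 - S_2 = U + \partial R$ and $\mathbf M(U) + \mathbf M(R) \leq 3\,\mathcal F(\tau_1 - \tau_2)$.

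Next, since $\partial(S_1 - S_2)$ is supported on $\partial M$, the remainder $U$ has boundary supported on $\partial M$ and very small mass. For $\epsilon_M$ below the Almgren isoperimetric threshold of $\widetilde M$, the deformation theorem (after first retracting the piece of $U$ lying in $\widetilde M \setminus M$ onto $\partial M$ via nearest-point projection in a tubular neighborhood) produces $W \in \mathbf I_{n+1}(\widetilde M)$ with $\partial W - U$ supported on $\partial M$ and $\mathbf M(W) \leq C_{\widetilde M}\,\mathbf M(U)$. Setting $Q := (R + W)\llcorner M$, one checks that $Q \in \mathbf I_{n+1}(M)$, that $\partial Q$ agrees with $K_1 - K_2$ modulo currents supported on $\partial M$, and that $\mathbf M(Q) \leq C_M\,\mathcal F(\tau_1 - \tau_2)$ by choosing the Federer decomposition close to optimal.

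The main technical obstacle is that the restriction operation $(\cdot)\llcorner M$ may a priori introduce uncontrolled boundary mass when $R + W$ is sliced along $\partial M$. Following the strategy of Li--Zhou \cite{LZ16}*{Lemma 3.15}, the cleanest way to avoid this is to first apply a small bi-Lipschitz deformation supported in a tubular neighborhood of $\partial M$ in $\widetilde M$ that pushes $S_1 - S_2$ slightly into the interior of $M$, perform the absolute isoperimetric filling in $\widetilde M$ on the pushed cycle, and then pull back; the resulting Lipschitz constant, and hence the final $C_M$, depend only on the second fundamental form of $\partial M$ in $\widetilde M$. The portion $T$ plays no role in this flat decomposition or isoperimetric filling, so the same construction goes through verbatim in the present setting with a portion.
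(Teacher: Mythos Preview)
The paper does not supply its own proof of this lemma; it simply records the statement, cites \cite{LZ16}*{Lemma 3.15}, and adds the remark that $\mathrm{spt}(K_1-K_2-\partial Q)$ is automatically disjoint from the portion $T$. So there is no argument in the paper to compare against, and I can only assess your sketch on its own.

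Your overall architecture is the standard one: write $S_1-S_2=U+\partial R$ from the definition of the flat norm, then fill $U$ modulo $\partial M$ by some $W$, and take $Q=(R+W)\llcorner M$. The gap is precisely at the filling step, and neither of your two justifications closes it. Almgren's isoperimetric theorem in $\widetilde M$ produces $W$ with $\partial W=U$ only when $U$ is an \emph{absolute} cycle, whereas here $\partial U=\partial(S_1-S_2)$ is supported on $\partial M$ but need not vanish. Retracting the piece of $U$ lying outside $M$ onto $\partial M$ does nothing to make the remaining interior part boundaryless. Your alternative in the last paragraph has the same defect: a bi-Lipschitz push of $S_1-S_2$ into $\mathrm{int}(M)$ is still not a cycle, so there is no ``absolute isoperimetric filling'' to perform on it.

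What is actually required here is a \emph{relative} isoperimetric inequality: every $U$ with $\mathrm{spt}(\partial U)\subset\partial M$ and $\mathbf M(U)$ small can be written $U=\partial W+V$ with $\mathrm{spt}(V)\subset\partial M$ and $\mathbf M(W)\le C\,\mathbf M(U)$. One clean route is to pass to the double $DM$ and antisymmetrize: since the reflection $\sigma$ fixes $\partial M$ pointwise, $U-\sigma_*U$ is a genuine cycle in $DM$ of mass at most $2\mathbf M(U)$; Almgren's theorem fills it by some $\hat W$, and $W=\hat W\llcorner M$ (after slicing at a nearby generic level of the collar to ensure integrality) does the job. Once this step is in place, the rest of your outline goes through.
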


\begin{lemma}[$\mathbf M$-isoperimetric lemma, c.f. \cite{LZ16}*{Lemma 3.17}]\label{lemma:M-isoperimetric}
There exists $\epsilon_M>0$ and $C_M>1$ depending only on $M$ such that for any $\tau_1,\tau_2\in\mathcal Z_n(M,\partial M)$ with
\[\mathbf M(\tau_1-\tau_2)<\epsilon_M,\]
there exists $Q\in\mathbf I_{n+1}(M)$ and $R\in\mathcal R_n(\partial M)$, such that
\begin{itemize}
  \item $K_1-K_2=Q+\partial R$;
  \item $\mathbf M(Q)+\mathbf M(R)<C_M\mathbf M(\tau_1-\tau_2),$
\end{itemize}
where $K_1$ and $K_2$ are the canonical representatives of $\tau_1$ and $\tau_2$.
\end{lemma}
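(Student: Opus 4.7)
The plan is to reduce the mass-norm isoperimetric bound to a relative Federer--Fleming isoperimetric inequality for integral $n$-currents in $M$ with boundary on $\partial M$, applied to a near-mass-minimizing representative of the class $\tau_1-\tau_2$. I read the conclusion as $K_1-K_2=\partial Q+R$, the dimensionally-consistent statement matching the analogous $\mathcal F$-version in Lemma \ref{lemma:F-isoperimetric}.

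First I would select a good representative: by definition of the mass norm on $\mathcal Z_n(M,\partial M)$, for any $\eta>0$ there exists $K\in\tau_1-\tau_2$ (that is, $K-(K_1-K_2)\in\mathcal R_n(\partial M)$) with $\mathbf M(K)\leq(1+\eta)\mathbf M(\tau_1-\tau_2)$. Setting $S:=K\llcorner\partial M\in\mathcal R_n(\partial M)$ and using the canonicality $K_i\llcorner\partial M=0$, one has $K_1-K_2=K-S$ with $\mathbf M(S)\leq\mathbf M(K)$. Moreover $\partial K=\partial(K_1-K_2)+\partial S$ is supported on $\partial M$, so $K$ is an integral $n$-current in $M$ whose boundary lies in $\partial M$ and whose mass is arbitrarily close to $\mathbf M(\tau_1-\tau_2)$.

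The core step is the following relative isoperimetric inequality, which I would prove as a separate lemma: there exist $\epsilon_M',C_M'>0$ depending only on $(M,\partial M,g)$ such that every $K\in\mathbf I_n(M)$ with $\mathrm{spt}(\partial K)\subset\partial M$ and $\mathbf M(K)<\epsilon_M'$ admits a decomposition $K=\partial Q+R_0$ with $Q\in\mathbf I_{n+1}(M)$, $R_0\in\mathcal R_n(\partial M)$, and $\mathbf M(Q)+\mathbf M(R_0)\leq C_M'\mathbf M(K)$. Granted this, setting $R:=R_0-S\in\mathcal R_n(\partial M)$ yields $K_1-K_2=K-S=\partial Q+R$ together with
\[
\mathbf M(Q)+\mathbf M(R)\leq (C_M'+1)\mathbf M(K)\leq (C_M'+1)(1+\eta)\mathbf M(\tau_1-\tau_2),
\]
and choosing $\eta$ small, $C_M:=2(C_M'+1)$, and $\epsilon_M:=\epsilon_M'/2$ gives the conclusion.

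The main obstacle is the relative isoperimetric inequality itself. To prove it I would embed $M\hookrightarrow \widetilde M$ as in the excerpt and reflect $K$ across $\partial M$ via the involution $\sigma$ induced by the normal exponential map on a fixed tubular neighborhood. The antisymmetrized current $\widehat K:=K-\sigma_\# K$ is an integral $n$-cycle in $\widetilde M$ with $\mathbf M(\widehat K)\leq 2\mathbf M(K)$, and the classical Federer--Fleming isoperimetric inequality furnishes $\widehat Q\in\mathbf I_{n+1}(\widetilde M)$ with $\partial\widehat Q=\widehat K$ and $\mathbf M(\widehat Q)\leq C\mathbf M(K)$ whenever $\mathbf M(K)$ is small enough (the $(n+1)/n$ exponent being absorbed into a linear bound by the smallness hypothesis). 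A coarea/slicing argument then selects a level $t^{*}>0$ at which the slice $\langle\widehat Q,d(\cdot,\partial M),t^{*}\rangle$ has mass comparable to $\mathbf M(\widehat Q)$; setting $Q:=\widehat Q\llcorner\{d(\cdot,\partial M)>t^{*}\}$ and retracting both that slice and the thin shell $K\llcorner\{0\leq d<t^{*}\}$ onto $\partial M$ via the tubular projection produces $R_0\in\mathcal R_n(\partial M)$ with mass controlled linearly by $\mathbf M(K)$. The bi-Lipschitz character of the tubular coordinates ensures that all constants depend only on $(M,\partial M,g)$. This closely parallels the argument in \cite{LZ16}*{Lemma 3.17}.
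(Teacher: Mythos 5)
The paper itself gives no proof of this lemma: the appendix imports it from \cite{LZ16}*{Lemma 3.17} and only adds the remark that, in the setting of a manifold with boundary $\partial M$ and portion $T$, the current $R$ lies in $\partial M$ and hence away from $T$. So your attempt is an independent reconstruction. Two things you get right at the outset: the conclusion must indeed be read as $K_1-K_2=\partial Q+R$ (the printed $Q+\partial R$ is dimensionally impossible and is a typo), and the reduction to a relative isoperimetric statement for a representative of $\tau_1-\tau_2$ is the natural first move. The near-minimizer is unnecessary, though: since $K_i\llcorner\partial M=0$, the canonical representative $K_1-K_2$ of $\tau_1-\tau_2$ already attains $\mathbf M(\tau_1-\tau_2)$, so the $\eta$-bookkeeping and the splitting off of $S=K\llcorner\partial M$ can be dropped.

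The core of your argument, however, has genuine gaps. (i) The reflection $\sigma$ you invoke is defined only on a tubular neighborhood of $\partial M$ in $\widetilde M$, so $\sigma_\# K$, and hence the central object $\widehat K=K-\sigma_\# K$, is simply not defined for a general $K$ supported in all of $M$; what the argument needs is the double of $M$ across $\partial M$ (with its merely Lipschitz metric), and reflecting only the collar part of $K$ would destroy the cycle property. (ii) In this paper the full boundary of $M$ is $\partial M\cup T$ with a corner along $\partial M\cap T$, and the content of the appendix version (see the remark after the lemma) is precisely that $Q$ stays in $M$ and $R$ lies in $\partial M$, away from $T$; doubling across $\partial M$ alone yields a manifold with boundary (two copies of $T$), so the Federer--Fleming filling and the subsequent restriction must be arranged so that nothing is deposited on $T$ --- your sketch never mentions $T$. (iii) Membership in $Z_n(M,\partial M)$ only constrains the support of $\partial K$, not its mass, so the assertion that $K$ is an integral current, and the cancellation $\sigma_\#\partial K=\partial K$ needed for $\partial\widehat K=0$, require justification (e.g.\ via the support theorem for flat chains). (iv) The final step is not an identity as written: restricting $\widehat Q$ to $\{d>t^{*}\}\cap M$ and then ``retracting the slice and the thin shell onto $\partial M$'' replaces those currents by pushforwards; to recover the exact decomposition $K_1-K_2=\partial Q+R$ you must add the homotopy (cylinder) currents of the retraction to $Q$ and the pushforwards to $R$, and verify that these, together with the coarea-selected slice, satisfy the linear mass bound. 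Each point is repairable, but as it stands the construction of $\widehat K$, the treatment of $T$, and the closing bookkeeping are missing, so the proposal does not yet amount to a proof; the paper's route --- quoting \cite{LZ16}*{Lemma 3.17} and checking the support statement relative to $T$ --- sidesteps all of this.
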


\begin{remark}
In Lemma \ref{lemma:F-isoperimetric}, we emphasize that the support set of $K_1-K_2-\partial Q$ is disjoint from $T$. In Lemma \ref{lemma:M-isoperimetric}, the support set of $R$ is disjoint from $T$.
\end{remark}

\section{Discretization and Interpolation}
In this section, we give a Discretization Theorem and Interpolation Theorem for sweepouts of $(M,\partial M, T)$. We refer to \cite{MN14} and \cite{Zhou15} for closed cases and \citelist{\cite{LZ16}*{\S 4.2}\cite{LMN16}*{\S2.10,2.11}} for compact manifolds with boundary.

\begin{theorem}[Discretization Theorem, \cite{LZ16}*{Theorem 4.12}]\label{thm:discrete}
Given a sweepout of $(M,\partial M,T)$,
\begin{equation*}
\Phi: I\longrightarrow (\mathcal Z_n(M,\partial M),\{\llbracket T\rrbracket\},\{0\}),
\end{equation*}
there exists a sequence of mappings
\begin{equation*}
\phi_i: I(1,j_i)_0\longrightarrow (\mathcal Z_n(M,\partial M),\{\llbracket T\rrbracket\},\{0\})
\end{equation*}
with $j_i<j_{i+1}$ and a sequence of positive numbers $\delta_i\rightarrow 0$ such that
\begin{enumerate}
  \item $S=\{\phi_i\}$ is an $(1,\mathbf M)$-homotopy sequence with $\mathbf M$-fineness
  $\mathbf f_{\mathbf M}(\phi_i)<\delta_i$;
  \item There exists a sequence of $k_i$ such that for all $x\in I(1,j_i)_0$,
  \begin{equation*}
  \mathbf M(\phi_i(x))\leq \sup\{\mathbf M(\Phi(y)):\alpha\in I(1,k_i)_1, x, y\in\alpha\}+\delta_i.
  \end{equation*}
  In particular, we have $\mathbf L(S)\leq\sup_{x\in I}\mathbf M(\Phi(x))$.
  \item $\sup\{\mathcal F(\phi_i(x)-\Phi(x)):x\in I(1,j_i)_0\}<\delta_i$.
\end{enumerate}
\end{theorem}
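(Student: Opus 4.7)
The plan is to adapt the discretization scheme of Marques--Neves \cite{MN14} (as extended to compact manifolds with boundary by Li--Zhou \cite{LZ16}) to the present setting, where the values of $\Phi$ lie in the space of relative cycles with boundary constrained to $\{\llbracket T\rrbracket\}$ at $t=0$ and $\{0\}$ at $t=1$. The three inputs that drive the construction are precisely the three conditions in the definition of a regular one-parameter family: flat continuity, uniform mass bound, and absence of mass concentration. The $\mathcal F$- and $\mathbf M$-isoperimetric lemmas (Lemmas \ref{lemma:F-isoperimetric} and \ref{lemma:M-isoperimetric}) replace their closed-manifold counterparts everywhere a filling argument is invoked.

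First, I would choose sequences of scales $k_i\to\infty$ and $j_i\to\infty$ with $j_i\ll k_i$, to be pinned down by a diagonal argument at the end. Sample $\Phi$ at the vertices of $I(1,k_i)_0$, always taking the canonical representative in $\mathcal Z_n(M,\partial M)$. Since $\Phi$ is continuous in the flat topology on the compact interval $I$, the modulus of continuity is uniform: for $i$ large, $\mathcal F(\Phi(x)-\Phi(x'))<\varepsilon_M$ whenever $\mathbf d(x,x')=1$ in $I(1,k_i)_0$, and the upper bound tends to zero with $i$. Lemma \ref{lemma:F-isoperimetric} then provides fillings $Q_{x,x'}\in\mathbf I_{n+1}(M)$ with $\mathrm{spt}(\Phi(x)-\Phi(x')-\partial Q_{x,x'})\subset\partial M$ and $\mathbf M(Q_{x,x'})$ arbitrarily small; by the remark after the lemma, this support is automatically disjoint from $T$, so the construction stays in the correct relative class.

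Second, to define $\phi_i$ on the coarser grid $I(1,j_i)_0$, I would implement the Almgren--Pitts interpolation between adjacent coarse vertices. Between neighbors $x,x'\in I(1,j_i)_0$ there are $3^{k_i-j_i}$ fine-scale vertices, and their successive $\mathcal F$-fillings $Q_{\cdot,\cdot}$ can be sliced into thin slabs to produce a refined walk of cycles whose consecutive mass differences are controlled. The no-concentration hypothesis $\lim_{r\to 0}\mathbf m(\Phi,r)=0$ is exactly what allows the slicing parameters to be chosen so that each mass jump along the walk is below $\delta_i$, after which the $\mathbf M$-isoperimetric Lemma \ref{lemma:M-isoperimetric} certifies the desired $\mathbf M$-fineness of $\phi_i$. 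The mass bound (2) then follows by replacing $\phi_i(x)$ with the sample at the nearest fine vertex and using the uniform flat-to-mass lower semi-continuity over small $\mathcal F$-neighborhoods, which is guaranteed again by no-concentration. Property (3) is immediate from continuity of $\Phi$ and the construction, and endpoint preservation $\phi_i(0)=\llbracket T\rrbracket$, $\phi_i(1)=0$ is built in by performing all modifications only at interior vertices.

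The main obstacle is ensuring that every step respects the portion $T$ and the relative boundary class. In the closed case, the whole scheme unfolds inside $\mathcal Z_n(M)$ with a single Almgren--Federer isoperimetric inequality; here the canonical-representative formalism is essential, because we must simultaneously keep $\phi_i(0)=\llbracket T\rrbracket$ exactly, keep $\phi_i(x)\llcorner\partial M=0$ to represent equivalence classes in $\mathcal Z_n(M,\partial M)$, and ensure that the intermediate fillings used in the slicing argument neither charge $T$ nor push mass through $T$. The remarks after Lemmas \ref{lemma:F-isoperimetric} and \ref{lemma:M-isoperimetric} (guaranteeing that $K_1-K_2-\partial Q$ and $R$ are supported away from $T$) are tailored to handle exactly this issue, so once they are invoked at each step the argument parallels \cite{LZ16}*{Theorem 4.12} with only notational changes; the final diagonal choice of $(j_i,k_i,\delta_i)$ closes the proof.
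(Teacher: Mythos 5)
Your proposal follows essentially the same route as the paper: the paper's proof simply states that the argument is parallel to the discretization theorems of Marques--Neves \cite{MN14}*{Theorem 13.1}, Li--Zhou \cite{LZ16}*{Theorem 4.12} and Zhou \cite{Zhou15}*{Theorem 5.5}, with the only change being the use of the $\mathcal F$-isoperimetric lemma (Lemma \ref{lemma:F-isoperimetric}) in $\mathcal Z_n(M,\partial M)$, which is exactly the adaptation you describe (including the point that the fillings stay away from the portion $T$ and that the endpoint values $\llbracket T\rrbracket$ and $0$ are preserved). Your sketch of the internal mechanics (sampling at fine scales, fillings, slicing controlled by the no-concentration condition, and the $\mathbf M$-isoperimetric lemma for fineness) is consistent with those cited proofs, so the proposal is correct and matches the paper's approach.
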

\begin{proof}
The proof is parallel to the one in \citelist{\cite{MN14}*{Theorem 13.1}\cite{LZ16}*{Theorem 4.12}\cite{Zhou15}*{Theorem 5.5}}. The slightly difference is that we use the $\mathcal F$-isoperimetric lemma \ref{lemma:F-isoperimetric} in $\mathcal Z_n(M,\partial M)$.
\end{proof}

\begin{theorem}[Interpolation Theorem, \cite{LZ16}*{Theorem 4.14}]\label{thm:interpolation}
There exists $C_0>0$ and $\delta_0 >0$ depending only on $(M,\partial M,T)$, such that for every map
\[\psi:I(1,k)_0\rightarrow \mathcal Z_n(M,\partial M)\]
with $\mathbf f_\mathbf M(\psi)<\delta_0$, there exists a continuous map
\[\Psi: I\rightarrow \mathcal Z_n(M,\partial M;\mathbf M)\]
such that
\begin{enumerate}
  \item $\Psi(x)=\psi(x)$ for all $x\in I(1,k)_0$;
  \item for every $\alpha\in I(1,k)_1$, $\Psi|_\alpha$ depends only on the restriction of $\psi$ on the vertices of $\alpha$, and
  \[\sup\{\mathbf M(\Psi(x)-\Psi(y)):x,y\in\alpha\}\leq C_0\mathbf f_\mathbf M(\psi).\]
\end{enumerate}
\end{theorem}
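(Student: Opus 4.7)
The plan is to follow the Marques-Neves interpolation strategy (Theorem 13.1 of \cite{MN14}, as adapted in \cite{LZ16}*{Theorem 4.14}) in the setting of relative cycles on a manifold with boundary and portion. The construction is entirely local on the 1-skeleton: since the conclusion requires that $\Psi|_\alpha$ depend only on the two vertex values $\psi(\partial\alpha)$, it suffices to produce, for every pair $\tau_0, \tau_1 \in \mathcal{Z}_n(M,\partial M)$ with $\mathbf{M}(\tau_0 - \tau_1) < \delta_0$, a mass-continuous path $\gamma : [0,1] \to \mathcal{Z}_n(M,\partial M;\mathbf{M})$ joining them with oscillation $\sup_{s,t} \mathbf{M}(\gamma(s) - \gamma(t)) \leq C_0\,\mathbf{M}(\tau_0 - \tau_1)$. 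Such single-edge paths then glue together along $I(1,k)_0$ to produce the global $\Psi$, and condition (2) of the theorem follows immediately because $\mathbf{f}_{\mathbf{M}}(\psi)$ dominates each edge's vertex mass difference.

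The single-edge construction proceeds by first applying the $\mathbf{M}$-isoperimetric Lemma~A.2 (choosing $\delta_0 \leq \epsilon_M$) to the canonical representatives $K_0, K_1$ of $\tau_0, \tau_1$, producing $Q \in \mathbf{I}_{n+1}(M)$ and $R \in \mathcal{R}_n(\partial M)$ with $K_0 - K_1 = Q + \partial R$ and $\mathbf{M}(Q) + \mathbf{M}(R) \leq C_M\,\mathbf{M}(\tau_0 - \tau_1)$. The interpolation is then obtained by slicing this filling: choose a generic Lipschitz function $h:M \to [0,1]$ and, for each $t \in [0,1]$, let $\gamma(t)$ be the equivalence class in $\mathcal{Z}_n(M,\partial M)$ of $K_1 + \partial(Q \llcorner \{h \leq t\})$. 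Federer slicing theory gives flat continuity of $\gamma$, and the crucial remark after Lemma~A.2 that $\mathrm{spt}(R) \subset \partial M$ ensures the boundary correction vanishes in the quotient $\mathcal{Z}_n(M,\partial M)$, so the relative-cycle-valued path is well-defined and has the correct endpoints $\gamma(0) = \tau_1$, $\gamma(1) = \tau_0$.

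The main technical obstacle is upgrading flat continuity to mass continuity while keeping the oscillation linear in $\mathbf{M}(\tau_0 - \tau_1)$. This is handled by an iterative subdivision scheme identical in spirit to \cite{MN14}*{\S 13}: decompose the filling as $Q = \sum_i Q_i$ with each $Q_i$ supported in a geodesic ball of radius smaller than the convexity radius of the isometric embedding $M \hookrightarrow \mathbb{R}^N$, re-apply Lemma~A.2 to each incremental piece, and concatenate short straight-line interpolations performed in ambient Euclidean coordinates to build a mass-continuous polygonal path in $\mathcal{Z}_n(M,\partial M;\mathbf{M})$ whose successive steps have mass comparable to $\mathbf{M}(Q_i)$. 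Summing over $i$ yields the desired oscillation bound with a constant $C_0$ that depends only on $C_M$, $\epsilon_M$, the convexity radius of $M$, and the dimension, hence only on $(M,\partial M,T)$. The sole adaptation from the closed-manifold case of \cite{MN14} is the use of the relative isoperimetric Lemma~A.2 (together with the observation that correction terms in $\mathcal{R}_n(\partial M)$ are invisible in the quotient) in place of the interior analogue; all other ingredients carry over verbatim.
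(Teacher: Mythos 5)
Your proposal takes essentially the same route as the paper: the paper's proof simply invokes the interpolation construction of Marques--Neves (Theorem 14.1 of \cite{MN14}, not 13.1) and \cite{LZ16}*{Theorem 4.14}, noting that the only modification is to perform all isoperimetric choices via the relative $\mathbf M$-isoperimetric lemma (Lemma \ref{lemma:M-isoperimetric}), so that correction terms supported in $\partial M$ disappear in the quotient $\mathcal Z_n(M,\partial M)$ --- exactly the adaptation you describe. Your edge-by-edge slicing and subdivision sketch is just a compressed account of the machinery the paper defers wholesale to \cite{MN14}*{\S 14}, so there is no substantive difference in approach.
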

\begin{proof}
The only difference here is that the map needs to satisfy $\mathrm{spt}(\partial \Psi(x))\subset\partial M$. Recall that in the proof of \citelist{\cite{MN14}*{Theorem 14.1}\cite{LZ16}*{Theorem 4.14}}, all the arguments are in the isoperimetric choices. Here, we use Lemma \ref{lemma:M-isoperimetric} and all others are the same.
\end{proof}

\begin{bibdiv}
\begin{biblist}

\bib{Alm62}{article}{
      author={Almgren, Frederick~Justin, Jr.},
       title={The homotopy groups of the integral cycle groups},
        date={1962},
        ISSN={0040-9383},
     journal={Topology},
      volume={1},
       pages={257\ndash 299},
         url={https://doi.org/10.1016/0040-9383(62)90016-2},
      review={\MR{0146835}},
}

\bib{Alm65}{article}{
      author={Almgren, Frederick~Justin, Jr.},
       title={The theory of varifolds},
        date={1965},
     journal={Mimeographed notes, Princeton},
       pages={l2},
}

\bib{ACS17}{article}{
      author={Ambrozio, Lucas},
      author={Carlotto, Alessandro},
      author={Sharp, Ben},
       title={Compactness analysis for free boundary minimal hypersurfaces},
        date={2018},
        ISSN={0944-2669},
     journal={Calc. Var. Partial Differential Equations},
      volume={57},
      number={1},
       pages={57:22},
         url={https://doi.org/10.1007/s00526-017-1281-y},
      review={\MR{3740402}},
}

\bib{Code03}{incollection}{
      author={Colding, Tobias~H.},
      author={De~Lellis, Camillo},
       title={The min-max construction of minimal surfaces},
        date={2003},
   booktitle={Surveys in differential geometry, {V}ol.\ {VIII} ({B}oston, {MA},
  2002)},
      series={Surv. Differ. Geom.},
      volume={8},
   publisher={Int. Press, Somerville, MA},
       pages={75\ndash 107},
      review={\MR{2039986}},
}

\bib{Cou40}{article}{
      author={Courant, Richard},
       title={The existence of minimal surfaces of given topological structure
  under prescribed boundary conditions},
        date={1940},
        ISSN={0001-5962},
     journal={Acta Math.},
      volume={72},
       pages={51\ndash 98},
         url={http://dx.doi.org/10.1007/BF02546328},
      review={\MR{0002478}},
}

\bib{DeRa16}{article}{
      author={De~Lellis, Camillo},
      author={Ramic, Jusuf},
       title={Min-max theory for minimal hypersurfaces with boundary},
        date={2016},
     journal={arXiv:1611.00926},
}

\bib{FPZ17}{article}{
      author={Folha, Abigail},
      author={Pacard, Frank},
      author={Zolotareva, Tatiana},
       title={Free boundary minimal surfaces in the unit 3-ball},
        date={2017},
     journal={Manuscripta Mathematica},
}

\bib{FS11}{article}{
      author={Fraser, Ailana},
      author={Schoen, Richard},
       title={The first {S}teklov eigenvalue, conformal geometry, and minimal
  surfaces},
        date={2011},
        ISSN={0001-8708},
     journal={Adv. Math.},
      volume={226},
      number={5},
       pages={4011\ndash 4030},
         url={http://dx.doi.org/10.1016/j.aim.2010.11.007},
      review={\MR{2770439}},
}

\bib{FS16}{article}{
      author={Fraser, Ailana},
      author={Schoen, Richard},
       title={Sharp eigenvalue bounds and minimal surfaces in the ball},
        date={2016},
        ISSN={0020-9910},
     journal={Invent. Math.},
      volume={203},
      number={3},
       pages={823\ndash 890},
         url={http://dx.doi.org/10.1007/s00222-015-0604-x},
      review={\MR{3461367}},
}

\bib{Fra00}{article}{
      author={Fraser, Ailana~M.},
       title={On the free boundary variational problem for minimal disks},
        date={2000},
        ISSN={0010-3640},
     journal={Comm. Pure Appl. Math.},
      volume={53},
      number={8},
       pages={931\ndash 971},
  url={http://dx.doi.org/10.1002/1097-0312(200008)53:8<931::AID-CPA1>3.3.CO;2-0},
      review={\MR{1755947}},
}

\bib{Gr03}{article}{
      author={Gromov, M.},
       title={Isoperimetry of waists and concentration of maps},
        date={2003},
        ISSN={1016-443X},
     journal={Geom. Funct. Anal.},
      volume={13},
      number={1},
       pages={178\ndash 215},
         url={http://dx.doi.org/10.1007/s000390300002},
      review={\MR{1978494}},
}

\bib{GJ86}{article}{
      author={Gr\"uter, M.},
      author={Jost, J.},
       title={On embedded minimal disks in convex bodies},
        date={1986},
        ISSN={0294-1449},
     journal={Ann. Inst. H. Poincar\'e Anal. Non Lin\'eaire},
      volume={3},
      number={5},
       pages={345\ndash 390},
         url={http://www.numdam.org/item?id=AIHPC_1986__3_5_345_0},
      review={\MR{868522}},
}

\bib{Gru87}{article}{
      author={Gr\"uter, Michael},
       title={Optimal regularity for codimension one minimal surfaces with a
  free boundary},
        date={1987},
     journal={Manuscripta Mathematica},
      volume={58},
       pages={295\ndash 343},
}

\bib{GLZ16}{article}{
      author={Guang, Qiang},
      author={Li, Martin},
      author={Zhou, Xin},
       title={Curvature estimates for stable free boundary minimal
  hypersurfaces},
        date={2016},
     journal={arXiv:1611.02605},
}

\bib{HN79}{article}{
      author={Hildebrandt, S.},
      author={Nitsche, J. C.~C.},
       title={Minimal surfaces with free boundaries},
        date={1979},
        ISSN={0001-5962},
     journal={Acta Math.},
      volume={143},
      number={3-4},
       pages={251\ndash 272},
         url={https://doi.org/10.1007/BF02392096},
      review={\MR{549778}},
}

\bib{Jost86}{article}{
      author={Jost, J\"urgen},
       title={Existence results for embedded minimal surfaces of controlled
  topological type. {II}},
        date={1986},
     journal={Ann. Scuola Norm. Sup. Pisa Cl. Sci. (4)},
      volume={13},
      number={3},
       pages={401\ndash 426},
}

\bib{KL17}{article}{
      author={Kapouleas, Nicolaos},
      author={Li, Martin},
       title={Free boundary minimal surfaces in the unit three-ball via
  desingularization of the critical catenoid and the equatorial disk},
        date={2017},
     journal={arXiv:1709.08556},
}

\bib{Ket16b}{article}{
      author={Ketover, Daniel},
       title={Free boundary minimal surfaces of unbounded genus},
        date={2016},
     journal={arXiv:1612.08691},
}

\bib{KMN16}{article}{
      author={Ketover, Daniel},
      author={Marques, Fernando~C.},
      author={Neves, Andr{\'e}},
       title={The catenoid estimate and its geometric applications},
        date={2016},
     journal={arXiv:1601.04514},
}

\bib{Le51}{article}{
      author={Lewy, Hans},
       title={On mimimal surfaces with partially free boundary},
        date={1951},
        ISSN={0010-3640},
     journal={Comm. Pure Appl. Math.},
      volume={4},
       pages={1\ndash 13},
         url={http://dx.doi.org/10.1002/cpa.3160040102},
      review={\MR{0052711}},
}

\bib{LZ16}{article}{
      author={Li, Martin},
      author={Zhou, Xin},
       title={Min-max theory for free boundary minimal hypersurfaces
  {I}-regularity theory},
        date={2016},
     journal={arXiv:1611.02612},
}

\bib{Li15}{article}{
      author={Li, Martin Man-chun},
       title={A general existence theorem for embedded minimal surfaces with
  free boundary},
        date={2015},
        ISSN={0010-3640},
     journal={Comm. Pure Appl. Math.},
      volume={68},
      number={2},
       pages={286\ndash 331},
         url={http://dx.doi.org/10.1002/cpa.21513},
      review={\MR{3298664}},
}

\bib{LMN16}{article}{
      author={Liokumovich, Yevgeny},
      author={Marques, Fernando~C},
      author={Neves, Andr{\'e}},
       title={Weyl law for the volume spectrum},
        date={2016},
     journal={arXiv:1607.08721},
}

\bib{MN12}{article}{
      author={Marques, Fernando~C.},
      author={Neves, Andr\'e},
       title={Rigidity of min-max minimal spheres in three-manifolds},
        date={2012},
        ISSN={0012-7094},
     journal={Duke Math. J.},
      volume={161},
      number={14},
       pages={2725\ndash 2752},
         url={http://dx.doi.org/10.1215/00127094-1813410},
      review={\MR{2993139}},
}

\bib{MN14}{article}{
      author={Marques, Fernando~C.},
      author={Neves, Andr\'e},
       title={Min-max theory and the {Willmore} conjecture},
        date={2014},
     journal={Annals of Mathematics},
      volume={179},
      number={2},
       pages={683\ndash 782},
}

\bib{MNS17}{article}{
      author={Maximo, Davi},
      author={Nunes, Ivaldo},
      author={Smith, Graham},
       title={Free boundary minimal annuli in convex three-manifolds},
        date={2017},
        ISSN={0022-040X},
     journal={J. Differential Geom.},
      volume={106},
      number={1},
       pages={139\ndash 186},
         url={http://projecteuclid.org/euclid.jdg/1493172096},
      review={\MR{3640009}},
}

\bib{MaRo17}{article}{
      author={Mazet, Laurent},
      author={Rosenberg, Harold},
       title={Minimal hypersurfaces of least area},
        date={2017},
        ISSN={0022-040X},
     journal={J. Differential Geom.},
      volume={106},
      number={2},
       pages={283\ndash 316},
         url={http://dx.doi.org/10.4310/jdg/1497405627},
      review={\MR{3662993}},
}

\bib{MY80}{article}{
      author={Meeks, William~H., III},
      author={Yau, Shing~Tung},
       title={Topology of three-dimensional manifolds and the embedding
  problems in minimal surface theory},
        date={1980},
        ISSN={0003-486X},
     journal={Ann. of Math. (2)},
      volume={112},
      number={3},
       pages={441\ndash 484},
         url={http://dx.doi.org/10.2307/1971088},
      review={\MR{595203}},
}

\bib{Mor86}{article}{
      author={Morgan, Frank},
       title={A regularity theorem for minimizing hypersurfaces modulo
  {$\nu$}},
        date={1986},
        ISSN={0002-9947},
     journal={Trans. Amer. Math. Soc.},
      volume={297},
      number={1},
       pages={243\ndash 253},
         url={https://doi.org/10.2307/2000466},
      review={\MR{849477}},
}

\bib{Mor16}{book}{
      author={Morgan, Frank},
       title={Geometric measure theory},
     edition={Fifth},
   publisher={Elsevier/Academic Press, Amsterdam},
        date={2016},
        ISBN={978-0-12-804489-6},
        note={A beginner's guide, Illustrated by James F. Bredt},
      review={\MR{3497381}},
}

\bib{Pit76}{book}{
      author={Pitts, Jon~T.},
       title={Existence and regularity of minimal surfaces on {R}iemannian
  manifolds},
      series={Mathematical Notes},
   publisher={Princeton University Press, Princeton, N.J.; University of Tokyo
  Press, Tokyo},
        date={1981},
      volume={27},
        ISBN={0-691-08290-1},
      review={\MR{626027}},
}

\bib{Sim83}{book}{
      author={Simon, Leon},
       title={Lectures on geometric measure theory},
      series={Proceedings of the Centre for Mathematical Analysis, Australian
  National University},
   publisher={Australian National University, Centre for Mathematical Analysis,
  Canberra},
        date={1983},
      volume={3},
        ISBN={0-86784-429-9},
      review={\MR{756417 (87a:49001)}},
}

\bib{Son15}{article}{
      author={Song, Antoine},
       title={Embeddedness of least area minimal hypersurfaces},
        date={2015},
     journal={arXiv:1511.02844},
}

\bib{Str84}{article}{
      author={Struwe, Michael},
       title={On a free boundary problem for minimal surfaces},
        date={1984},
        ISSN={0020-9910},
     journal={Invent. Math.},
      volume={75},
      number={3},
       pages={547\ndash 560},
         url={http://dx.doi.org/10.1007/BF01388643},
      review={\MR{735340}},
}

\bib{Wang}{article}{
      author={Wang, Zhichao},
       title={Min-max minimal hypersurface in manifolds with convex boundary
  and ${Ric}\geq 0$},
        date={2017},
     journal={arXiv:1709.03672},
}

\bib{Whi91}{article}{
      author={White, Brian},
       title={The space of minimal submanifolds for varying {R}iemannian
  metrics},
        date={1991},
        ISSN={0022-2518},
     journal={Indiana Univ. Math. J.},
      volume={40},
      number={1},
       pages={161\ndash 200},
         url={http://dx.doi.org/10.1512/iumj.1991.40.40008},
      review={\MR{1101226 (92i:58028)}},
}

\bib{Whi17}{article}{
      author={White, Brian},
       title={On the bumpy metrics theorem for minimal submanifolds},
        date={2017},
        ISSN={0002-9327},
     journal={Amer. J. Math.},
      volume={139},
      number={4},
       pages={1149\ndash 1155},
         url={https://doi.org/10.1353/ajm.2017.0029},
      review={\MR{3689325}},
}

\bib{Ye91}{article}{
      author={Ye, Rugang},
       title={On the existence of area-minimizing surfaces with free boundary},
        date={1991},
        ISSN={0025-5874},
     journal={Math. Z.},
      volume={206},
      number={3},
       pages={321\ndash 331},
         url={http://dx.doi.org/10.1007/BF02571346},
      review={\MR{1095757}},
}

\bib{Zhou15}{article}{
      author={Zhou, Xin},
       title={Min-max minimal hypersurface in {$(M^{n+1},g)$} with {$Ric>0$}
  and {$2 \leq n\leq 6$}},
        date={2015},
        ISSN={0022-040X},
     journal={J. Differential Geom.},
      volume={100},
      number={1},
       pages={129\ndash 160},
         url={http://projecteuclid.org/euclid.jdg/1427202766},
      review={\MR{3326576}},
}

\bib{Zhou17}{article}{
      author={Zhou, Xin},
       title={Min--max hypersurface in manifold of positive {Ricci} curvature},
        date={2017},
     journal={Journal of Differential Geometry},
      volume={105},
      number={2},
       pages={291\ndash 343},
}

\end{biblist}
\end{bibdiv}

\bibliographystyle{plain}

\end{document}